\newcommand\ede{ \, := \, }
\newcommand{\supp}{\operatorname{supp}}
\newcommand\pullback{\sp{\downarrow\downarrow}}
\newcommand{\tto}{\rightrightarrows}
\newcommand{\st}{\rightrightarrows}
\newcommand{\dr}{\rightrightarrows}
\newcommand{\eqdef}{:=}
\newcommand\mathbfPsi{\mathbf \Psi}
\newcommand{\Prim}{\operatorname{Prim}}
\newcommand{\pbg}{\downdownarrows}
\newcommand{\CC}{\mathbb C}
\newcommand{\N}{\mathbb N}
\newcommand{\RR}{\mathbb R}
\newcommand{\R}{\mathbb R}
\newcommand{\ZZ}{\mathbb Z}
\newcommand\pa{{\partial}}
\newcommand\Cstar{C\sp{\ast}}
\newcommand\Cs[1]{C\sp{\ast}(#1)}
\newcommand\rCs[1]{C_r\sp{\ast}(#1)}
\newcommand{\A}{\mathcal A}
\newcommand{\maC}{\mathcal C}
\newcommand{\cC}{\mathcal C}
\newcommand{\maF}{\mathcal F}
\newcommand{\maG}{\mathcal G}
\newcommand{\cG}{\mathcal G}
\newcommand{\G}{\mathcal G}
\newcommand{\maH}{\mathcal H}
\newcommand{\cH}{\mathcal H}
\newcommand{\cJ}{\mathcal J}
\newcommand{\maK}{\mathcal K}
\newcommand{\maL}{\mathcal L}
\newcommand{\maP}{\mathcal P}
\newcommand{\maR}{\mathcal R}
\newcommand{\cV}{\mathcal V}
\newcommand{\V}{\mathcal V}
\newcommand{\maW}{\mathcal W}
\newcommand{\g}{\mathfrak{g}}
\newcommand{\B}{\mathcal{B}}
\renewcommand{\S}{\mathbb{S}}
\newcommand{\de}{{\rm d}}
\newcommand{\maJ}{\mathcal J}
\def\pa{\partial}
\def\d{\partial}
\newtheorem{theorem}{Theorem}[section]
\newtheorem{thm}{Theorem}[section]
\newtheorem{proposition}[theorem]{Proposition}
\newtheorem{corollary}[theorem]{Corollary}
\newtheorem{lm}[theorem]{Lemma}
\theoremstyle{definition}
\newtheorem{definition}[theorem]{Definition}
\newtheorem{defn}[theorem]{Definition}
\theoremstyle{remark}
\newtheorem{rem}[theorem]{Remark}
\newtheorem{remark}[theorem]{Remark}
\newtheorem{example}[theorem]{Example}
\newtheorem{ex}[theorem]{Example}
\author[C. Carvalho]{Catarina Carvalho} \address{Dep. Matem\'{a}tica,
    Instituto Superior T\'{e}cnico, University of Lisbon, Av. Rovisco
    Pais, 1049-001 Lisbon, Portugal }
\email{catarina.carvalho@math.tecnico.ulisboa.pt}
\author[R. C\^ome]{R\'emi C\^ome} \address{Institut \'Elie Cartan de Lorraine, Universit\'e de Lorraine, 3 rue Augustin Fresnel, 57000 Metz, France}
\email{remi.come@univ-lorraine.fr}
\author[Y. Qiao]{Yu Qiao} \address{School of Mathematics and
  Information Science, Shaanxi Normal University, Xi'an, 710119,
  China} \email{yqiao@snnu.edu.cn}
\thanks{Carvalho was partially supported by Funda\c c\~ao para a Ci\^{e}ncia e a Tecnologia, Portugal,
UID/MAT/04721/2013. Qiao
  was partially supported by NSF of China 11301317 and the Foundational
  Research Funds for the Central Universities GK201803003.\\
AMS Subject classification (2010):
58J40 (primary), 58H05, 31B10, 47L80, 47L90.\\
Key-words:  Fredholm operator, Fredholm groupoid, Boundary action groupoid,
$C^*$-algebra, Pseudodifferential operator, Layer potentials method, Conical domain, Desingularization, Weighted Sobolev space.}
\date\today
\title[Boundary Action Groupoids and Layer Potentials]{Gluing action groupoids: Fredholm conditions and layer potentials}
\begin{document}
\maketitle

\begin{abstract}
  We introduce a new class of groupoids, called \emph{boundary action groupoids}, which are obtained by gluing reductions of action groupoids. We show that such groupoids model the analysis on many singular spaces, and we give several examples. Under some conditions on the action of the groupoid, we obtain Fredholm criteria for the pseudodifferential operators generated by boundary action groupoids. Moreover, we show that layer potential groupoids for conical domains constructed in
 an earlier paper (Carvalho-Qiao, Central European J. Math., 2013) are
 both Fredholm groupoids and boundary action groupoids, which enables us to deal with many analysis problems
 on singular spaces in a unified way. As an application,
 we obtain Fredholm criteria for operators on layer potential groupoids.
\end{abstract}

\tableofcontents

\section{Introduction}

\label{sec:introduction}

The aim of this paper is to study some algebras of psuedodifferential operators on open manifolds that are ``regular enough at infinity'' (in that they can be compactified to a \enquote{nice} manifold with corners). More specifically, we look for a full characterization of the \emph{Fredholm} operators, meaning those that are invertible modulo a compact operator.

\subsection{Background}
\label{sub:background_and_main_results}

Let $M_0$ be a smooth manifold and $P :H^s(M_0) \to H^{s-m}(M_0)$ a differential operator of order $m$, acting between Sobolev spaces. When $M_0$ is a closed manifold, it is well-known that $P$ is Fredholm if, and only if, it is \emph{elliptic} \cite{Hor1985}. This is a important result, that has many applications to partial differential equations, spectral theory and index theory. A great deal of work has been done to obtain such conditions when $M_0$ is not compact, because in that case being elliptic is no longer a sufficient condition to be Fredholm.

A possible approach is to consider manifolds $M_0$ that embed as the interior of a compact manifold with corners $M$ and differential operators that are ``regular'' near $\d M$. This is the approach followed by Melrose, Monthubert, Nistor, Schulze and many others: see for instance \cite{Mel1993,Sch1991,LN,DLR2015}. A differential operator $P$ in this setting is Fredholm if, and only if, it is elliptic \emph{and} a family of \emph{limit operators} $(P_x)_{x \in \d M}$ is invertible; we shall give more details below.

Lie groupoids have been proven to be an effective tool to obtain Fredholmness results and to model analysis on singular spaces in general (see for instance \cite{MonthubertSchrohe, AJ2, DebordLescure1, DebordLescure2, LN, Monthubert01, MonthubertNistor, MRen} and the references therein for a small sample of applications). One general advantage of this strategy is that, by associating a Lie groupoid to a given singular problem, not only are we able to use groupoid techniques, but we also get automatically a groupoid $C^*$-algebra and well-behaved pseudodifferential calculi naturally affiliated to
this $C^*$-algebra \cite{ASkandalis2, LMN, LN, Monthubert03, NWX,vanErpYuncken}. In many situations, the family of limit operators can be obtained from suitable representations of the groupoid $C^{*}$-algebra, so Fredholmness may be studied through representation theory.

Recently \cite{CNQ, CNQ17}, the concept of \emph{Fredholm groupoids} was introduced as, in some sense, the largest class of Lie groupoids for which such Fredholm criteria hold with respect to a natural class of representations, the {regular representations} (see Section \ref{s.fredholm} for the precise definitions). A characterization of such groupoids is given relying on the notions of \emph{strictly spectral} and \emph{exhaustive} families of representations, as in \cite{nistorPrudhon, Roch}.
The associated non-compact manifolds are named \emph{manifolds with amenable ends}, since certain isotropy groups at infinity are assumed to be amenable. This is the case for manifolds with cylindrical and poly-cylindrical ends, for
manifolds that are asymptotically Euclidean, and for manifolds that
are asymptotically hyperbolic, and also manifolds obtained by iteratively blowing-up singularities. In \cite{CNQ}, we discuss these examples extensively, and show how the Fredholm groupoid approach provides a unified treatment for many singular problems.

Many interesting Fredholm groupoids are action groupoids: this approach has been followed by Georgescu, Iftimovici and their collaborators \cite{GI2006,GN2017b,MNP2017,Man2017}. These authors considered the smooth action of a Lie group $G$ on a compact manifold with corners $M$. In this setting, the limit operators $(P_x)_{x \in \d M}$ are obtained as ``translates at infinity'' of $P$ under the action of $G$. This point of view allows the study of operators with singular coefficients, such as those occuring in the $N$-body problem.

The first part of our work studies a special class of groupoids, named \emph{boundary action groupoids}, which is obtained by gluing action groupoids (in a sense made precise in the paper). Boundary action groupoids have a simple local structure, and occur naturally in many of the examples discussed above.

In the second part of the paper, we consider conical domains. Our original motivation comes here from the study of boundary problems for elliptic differential equations,
namely by applications of the classical {method of layer potentials}, which reduces differential equations to \emph{boundary} integral equations \cite{Fol, McL, Tay2}. One typically wants to invert an operator of the form \enquote{$\frac{1}{2} +K$} on suitable
function spaces on the boundary of some domain $\Omega$. If the boundary is $\cC^1$, then the
integral operator $K$ is compact on $L^2(\pa\Omega)$ \cite{FJR}, so the operator $\frac{1}{2}+K$ is Fredholm and
we can apply the classical Fredholm theory to solve the Dirichlet problem. But if
there are singularities on the boundary, as in the case of conical domains, this result is not necessarily true \cite{Els, FJL, LP, Verchota}.
Suitable groupoid $C^{*}$-algebras and their representation theory are then a means to provide the right replacement for the compact operators, and the theory of Fredholm groupoids is suited to yield the desired Fredholm criteria.

In \cite{CQ13},  we associated to $\Omega$, or
more precisely to $\pa \Omega$, a \emph{layer potentials groupoid}  over the (desingularized) boundary that aimed to provide the right setting to study invertibility and Fredholm problems as above. As a space, we have
\begin{equation*}
  \cG:= \left(\bigsqcup\limits_{i} (\pa\omega_i \times \pa\omega_i) \times \RR^+ \right)  \bigsqcup \Omega_0 \times \Omega_0 \quad  \tto \quad M:=\left(\bigcup\limits_{i } \partial\omega_{i} \times [0,1) \right)
    \bigcup \Omega_0,
\end{equation*}
where $\bigsqcup$ is the disjoint union, $\Omega_0$ is the smooth part of $\pa \Omega$, and the local cones have bases  $\omega_{i} \subset
S^{n-1}$, where $\omega_i$'s are domains with smooth boundary, $i=1,...,l$. The space of units $M$ can be thought of as a \enquote{desingularized boundary}. The limit operators in this case, that is, the operators over $M\setminus \Omega_{0}$, have dilation invariant kernels  on $(\pa\omega_i\times \pa\omega_i) \times \RR^+ $, which eventually yield a family of Mellin convolution operators on $( \pa\omega_i) \times \RR^+ $, one for each local cone. This fact was one of the original motivations in our definition.
In \cite{CQ13}, we were able to obtain Fredholm criteria making use of the machinery of pseudodifferential operators on Lie manifolds. These Fredholm criteria are formulated on weighted Sobolev spaces,
 we refer the reader to \cite{Kon,MR} and references therein for some details.
 Let us state their definition here: let $r_{\Omega}$ be the  smoothed  distance
function to the set of conical points of $\Omega$ and let $\Omega_0$ be the smooth part of $\partial\Omega$. Recall that the $m$-th
Sobolev space on $\pa\Omega$ with weight $ r_{\Omega}$ and index $a$ is defined by
\begin{equation*}\label{eq.def.ws}
  \maK_{a}^m(\pa\Omega)=\{u\in L^2_{\text{loc}}(\pa\Omega), \, \,
  r_{\Omega}^{|\alpha|-a}\partial^\alpha u\in L^2(\Omega_0), \,\,\,\text{for
    all}\,\,\, |\alpha|\leq m\}.
\end{equation*}
We have the following isomorphism \cite{BMNZ}:
\begin{equation*}
   \maK^{m}_{\frac{n-1}{2}}(\partial\Omega)\simeq
  H^{m}(\partial'\Sigma(\Omega),g), \quad  \mbox{  for all  } m\in \mathbb{R},
\end{equation*}
where $\Sigma(\Omega)$ is a desingularization of $\Omega$ and $g=r_\Omega^{-2}g_e$, with $g_e$ the standard Euclidean metric, and $\partial'\Sigma(\Omega)$ is the union of the hyperfaces that are not at infinity in $\pa \Sigma(\Omega)$, which can be identified with a desingularization of $\pa \Omega$ (see Section \ref{s.LP_groupoids}).

\subsection{Overview of the main results}
\label{sub:overview_of_the_main_results}

The purpose of the the present paper is two-fold. The first part of the paper introduces the class of \emph{boundary action groupoids}, that are obtained by \emph{gluing} a family of action groupoids $(G_i \rtimes M_i)_{i \in I}$ (in a sense made precise below). We will show that this setting recovers many interesting algebras of pseudodifferential operators. Moreover, we rely on the results in \cite{CNQ2017} to obtain the following Fredholm condition:

\begin{theorem} \label{thm:intro_fredholm}
  Let $\G\dr M$ be a boundary action groupoid with unique dense orbit $U \subset M$, and let $P \in L^m_s(\G) \supset \Psi^m(\G)$. Assume that the action of $\G$ on $\d M$ is trivial and that, for each $x \in \d M$, the isotropy group $\G_x^x$ is amenable. Then $P: H^s(U) \to H^{s-m}(U)$ is Fredholm if, and only if
  \begin{enumerate}
    \item $P$ is elliptic, and
    \item $P_x : H^s(\G^x_x) \to H^{s-m}(\G_x^x)$ is invertible for all $x \in \d M$.
  \end{enumerate}
\end{theorem}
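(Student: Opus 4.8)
The plan is to deduce the statement from the general Fredholm-groupoid machinery of \cite{CNQ2017}, by checking its hypotheses for boundary action groupoids and then identifying the abstract limit representations with the concrete operators $P_x$. Since $U$ is the unique dense orbit, its complement $\partial M$ is a closed invariant subset, giving a reduction $\G_{\partial M} := \G_{\partial M}^{\partial M}$ and a short exact sequence of $\Cstar$-algebras
\begin{equation*}
  0 \longrightarrow \Cs{\G_U} \longrightarrow \Cs{\G} \longrightarrow \Cs{\G_{\partial M}} \longrightarrow 0 ,
\end{equation*}
where $\G_U \cong U \times U$ is the pair groupoid, so that $\Cs{\G_U} \cong \maK(L^2(U))$. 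The regular representation $\pi_{x_0}$ at an interior point $x_0 \in U$ realizes $P$ as the operator $P : H^s(U) \to H^{s-m}(U)$, while for $x \in \partial M$ the regular representation $\pi_x$ on $H^s(\G_x)$ produces the boundary operators.

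First I would verify that $\G$ is a \emph{Fredholm groupoid}, that is, that the family $(\pi_x)_{x \in \partial M}$ of boundary regular representations is \emph{strictly spectral}, so that an element of $\Cs{\G}$ is invertible modulo $\Cs{\G_U} \cong \maK$ exactly when all the $\pi_x$ send it to invertible operators. By the characterization recalled in Section \ref{s.fredholm}, it suffices to show that this family is exhaustive for $\Cs{\G_{\partial M}}$ and that the relevant isotropy algebras are those of amenable groups. Here the \emph{local} structure of a boundary action groupoid is essential: near each boundary face $\G$ is isomorphic to a reduction of an action groupoid $G_i \rtimes M_i$, for which exhaustiveness of the orbit representations is available. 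The amenability of each $\G_x^x$, $x \in \partial M$, then forces the reduced and full norms to agree on the relevant ideals, upgrading exhaustiveness to strict spectrality. This is the step I expect to be the main obstacle, since it is precisely where the gluing construction and the amenability hypothesis must be combined.

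Granting that $\G$ is Fredholm, I would then invoke the abstract criterion to conclude that $P : H^s(U) \to H^{s-m}(U)$ is Fredholm if and only if its principal symbol is invertible, \ie $P$ is elliptic, and $\pi_x(P)$ is invertible on $H^s(\G_x)$ for every $x \in \partial M$. It remains to identify $\pi_x(P)$ with $P_x$.

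This is where the hypothesis that the action of $\G$ on $\partial M$ is trivial enters. Triviality means that the orbit of each $x \in \partial M$ is the single point $\{x\}$, so the source fibre collapses onto the isotropy group, $\G_x = \G_x^x$. Consequently $\pi_x$ is precisely the regular representation of the group $\G_x^x$, and $\pi_x(P)$ is the convolution operator $P_x$ acting on $H^s(\G_x^x)$. Thus invertibility of $\pi_x(P)$ on $H^s(\G_x)$ is the same as invertibility of $P_x : H^s(\G_x^x) \to H^{s-m}(\G_x^x)$, and combining this identification with the abstract Fredholm criterion yields exactly conditions (1) and (2). Note that without triviality of the boundary action the fibre $\G_x$ would fibre non-trivially over the orbit of $x$, and $\pi_x(P)$ would be an induced operator rather than the single group operator $P_x$; this hypothesis is what makes the clean reformulation in terms of the isotropy groups possible.
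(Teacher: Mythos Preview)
Your overall architecture matches the paper's: establish that $\G$ is a Fredholm groupoid, then invoke Theorem~\ref{thm.nonclassical2}. The identification $\G_x=\G_x^x$ for $x\in\partial M$ via the trivial-action hypothesis is also how the paper passes from the abstract criterion to the statement in terms of isotropy groups.

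There is, however, a genuine gap in where you deploy the triviality hypothesis. You use it only at the end, to collapse $\G_x$ onto $\G_x^x$, while earlier you assert that exhaustiveness of the regular representations of $\Cs{\G_{\partial M}}$ ``is available'' from the local action-groupoid structure alone. That is precisely the step the paper cannot do without triviality. What the paper actually argues is: since the action of $\G$ on $F=\partial M$ is trivial, each local piece $\G|_{F_i}$ is isomorphic to the \emph{trivial group bundle} $F_i\times G_i$ (not merely a reduction of an action groupoid), so $\G_F$ is a locally trivial bundle of amenable Lie groups. This is exactly the hypothesis of Corollary~\ref{prop.fred} (via Example~\ref{ex.Exel}), which yields the strong Exel property and hence Fredholmness directly. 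Without triviality the boundary pieces are only reductions of action groupoids with possibly nontrivial orbits, and exhaustiveness of the regular representations does not follow from anything in Section~\ref{s.fredholm}; the paper explicitly defers that case to a separate, more involved argument. Two smaller points: you should also record that $\G$ is Hausdorff (the paper cites Lemma~\ref{lm:boundary_action_Hausdorff}), since the Fredholm-groupoid results are stated under that hypothesis; and your phrase about amenability ``upgrading exhaustiveness to strict spectrality'' is slightly off, since exhaustive families are always strictly spectral---amenability is used to pass between full and reduced $C^\ast$-algebras.
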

Here $L^m_s(\G)$ is the completion of the space $\Psi^m(\G)$ of order-m pseudodifferential operators on $\G$ in the topology of $\mathcal{L}(H^s,H^{s-m})$, which act on $U$ in a natural way (see Subsection \ref{ss.ops.grpds}). The notion of ellipticity for $P$ is the usual one: its principal symbol $\sigma(P) \in \Gamma(T^*U)$ should be invertible outside the zero-section. Theorem \ref{thm:intro_fredholm} extends directly to pseudodifferential operators acting between sections of vector bundles.

We will show that the assumptions of Theorem \ref{thm:intro_fredholm} are satisfied in many natural situations, for instance when one wishes to study geometric operators on asymptotically Euclidean or asymptotically hyperbolic manifolds. The limit operators $P_x$ are right-invariant differential operators on the groups $\G_x^x$ and are of the same type as $P$. For example, if $P$ is the Laplacian on $M_0$, then $P_x$ is also the Laplacian for a right-invariant metric on the group $\G_x^x$.

For the second part of the paper, we relate both the Fredholm groupoid and boundary groupoid approaches to the study of \emph{layer potential operators} on domains with \emph{conical singularities}.
We consider here bounded domains with conical points $\Omega$ in
$\mathbb{R}^n$, $n \ge 2$, that is, $\overline{\Omega}$ is locally diffeomorphic
to a cone with smooth, \emph{possibly disconnected}, base. (If $n=2$, we allow $\Omega$ to be a domain with cracks. See Section  \ref{s.LP_groupoids} for the precise definitions.) We consider the layer potentials groupoid defined in \cite{CQ13}, which is a groupoid over over a desingularization of the boundary,  
and we place it in the setting of boundary action groupoids. Moreover, we show independently that the layer potentials groupoid associated to the boundary of a conical domain is indeed a Fredholm groupoid (Theorem \ref{thm.LPFred}).
 We obtain  Fredholm criteria naturally
and are able to extend them to  a space of operators  that contains $L^{2}$-inverses.

Applying the results of \cite{CNQ,CNQ2017} for Fredholm groupoids, we obtain our main result (Theorems \ref{thm.fredholm}). As above, the space $L^{m}_s(\maG)$ is the completion of the space of order-$m$ pseudodifferential operators $\Psi^m(\maG)$ with respect to the operator norm on Sobolev spaces (see Section \ref{ss.ops.grpds}).

\begin{theorem}
Let $\Omega \subset \mathbb{R}^n$ be a conical domain without cracks with the set of conical points
$\Omega^{(0)}=\{p_1,p_2,\cdots, p_l\}$, with possibly disconnected cone bases $\omega_{i}\subset S^{n-1}$.

\smallskip

Let $\maG\tto M=\pa'\Sigma(\Omega)$
be the layer potential groupoid as in Definition \ref{gpd1}. Let $P\in L^{m}_s(\maG)\supset \Psi^m(\maG)$ and $s \in \RR $.
Then $P : \maK_{\frac{n-1}{2}}^s(\pa\Omega) \to \maK_{\frac{n-1}{2}}^{s-m}(\pa\Omega)$ is Fredholm if, and only if,
  \begin{enumerate}
    \item $P$ is elliptic, and
    \item the Mellin convolution operators
     $$P_{i}: =\pi_{p_{i}}(P) : H^s(\RR^+\times \pa\omega_i;g) \to H^{s-m}(\RR^+\times \pa\omega_i;g)\,,$$
	     are invertible for all $i=1,..., l$,
	        where the metric $g := r^{-2}_\Omega \, g_e$ with $g_e$ the Euclidean metric.
  \end{enumerate}
\end{theorem}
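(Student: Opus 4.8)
The plan is to deduce the stated Fredholm criterion from the general theory of Fredholm groupoids developed in \cite{CNQ, CNQ2017}, taking as its essential input the fact (Theorem \ref{thm.LPFred}) that the layer potential groupoid $\maG$ is a Fredholm groupoid.

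First I would transport the problem from the weighted Sobolev spaces to the natural Sobolev spaces attached to $\maG$. The isomorphism $\maK^m_{\frac{n-1}{2}}(\pa\Omega) \simeq H^m(\pa'\Sigma(\Omega), g)$ recorded in the introduction, together with the identification of the smooth part $\Omega_0$ of $\pa\Omega$ with the unique open dense orbit $U \subset M = \pa'\Sigma(\Omega)$, lets one rewrite the Fredholm question for $P : \maK^s_{\frac{n-1}{2}}(\pa\Omega) \to \maK^{s-m}_{\frac{n-1}{2}}(\pa\Omega)$ as the Fredholm question for $P \in L^m_s(\maG)$ acting on $H^s(U) \to H^{s-m}(U)$. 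Since $\maG$ is a Fredholm groupoid, the abstract criterion then asserts that $P$ is Fredholm on $H^s(U)$ if and only if it is elliptic and each of its regular representations at the boundary orbits is invertible.

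The heart of the matter is to identify these boundary regular representations explicitly with the Mellin convolution operators $P_i$. From the space description
$$\maG = \left(\bigsqcup_i (\pa\omega_i \times \pa\omega_i) \times \RR^+\right) \bigsqcup (\Omega_0 \times \Omega_0),$$
the reduction of $\maG$ over the boundary component $\pa\omega_i \times [0,1)$ attached to the conical point $p_i$ carries a pair-groupoid structure on $\pa\omega_i$ together with the dilation ($\RR^+$) structure in the radial variable. I would show that the regular representation $\pi_{p_i}$ sends $P$ to an operator whose Schwartz kernel on $(\pa\omega_i \times \pa\omega_i) \times \RR^+$ is dilation-invariant, so that applying the Mellin transform in the $\RR^+$-variable yields exactly the Mellin convolution operator $P_i$ on $\RR^+ \times \pa\omega_i$. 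This recovers the family of limit operators anticipated in \cite{CQ13} and recalled in the introduction, and at the same time confirms that the correct target spaces are the Sobolev spaces $H^s(\RR^+ \times \pa\omega_i; g)$ built from the metric $g = r_\Omega^{-2} g_e$.

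The main obstacle is precisely this explicit identification, together with the verification that the weighted-Sobolev target spaces match the groupoid Sobolev spaces. Because Theorem \ref{thm.LPFred} already guarantees that $\maG$ is a Fredholm groupoid --- in particular that the boundary regular representations form an exhaustive family, with the amenability of the relevant isotropy (built from the abelian group $(\RR^+, \cdot) \cong (\RR, +)$) already subsumed in that statement --- the remaining work is analytic rather than structural: one must carry out the Mellin transform turning the dilation-invariant kernels over $(\pa\omega_i \times \pa\omega_i) \times \RR^+$ into convolution operators, and one must check the compatibility of $g = r_\Omega^{-2} g_e$ with the Lie-manifold structure on the desingularization $\Sigma(\Omega)$, so that the weighted derivatives defining $\maK^s_{\frac{n-1}{2}}$ correspond to the $\maG$-invariant vector fields and $H^s(\RR^+ \times \pa\omega_i; g)$ is the genuine target of $\pi_{p_i}$. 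With these identifications in place, the stated equivalence follows directly from the Fredholm groupoid criterion.
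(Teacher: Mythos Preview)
Your proposal is correct and follows essentially the same route as the paper: the paper's argument consists precisely of invoking Theorem~\ref{thm.LPFred} to know that $\maG$ is Fredholm, applying the general criterion Theorem~\ref{thm.nonclassical2}, identifying the boundary regular representations with Mellin convolution operators (as in Proposition~\ref{Groupoid} and the discussion following it), and using Proposition~\ref{Identification} to pass between the weighted Sobolev spaces $\maK^m_{\frac{n-1}{2}}(\pa\Omega)$ and $H^m(\pa'\Sigma(\Omega),g)$. The only minor difference is emphasis: the paper treats the Mellin-convolution identification and the Sobolev-space isomorphism as already established earlier in the text, whereas you flag them as the residual analytic work to be carried out.
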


The above theorem also holds, with suitable modifications, for polygonal domains with ramified cracks.

The layer potentials groupoid constructed here is related to the so-called  $b$-groupoid (Example \ref{bgrpd}) associated to the manifold
with smooth boundary $\pa' \Sigma(\Omega)$. The $b$-groupoid can be used to recover Melrose's $b$-calculus
\cite{MelroseAPS}. If the boundaries of the local cones bases are connected, then the two groupoids coincide (note that it is often the case that the boundaries are disconnected, for instance take $n=2$). In general, our pseudodifferential calculus
contains the compactly supported $b$-pseudodifferential operators, in that our
groupoid contains the $b$-groupoid as an open
subgroupoid. The main difference at the groupoid level is that in the usual $b$-calculus there is no interaction between the different faces at each conical point.

In \cite{QL18}, Li and the third-named author applied the techniques of pseudodifferential operators on Lie groupoids to the method of layer potentials on plane polygons (without cracks) to obtain the invertibility of operators $I \pm K$ on suitable weighted Sobolev spaces on the boundary, where $K$ is the double layer potential operators (also called Neumann-Poincar\'{e} operators) associated to the Laplacian and the polygon. The Lie groupoids used in that paper are exactly the groupoids we constructed in \cite{CQ13}, which will be shown to be Fredholm in this paper. Moreover, the third-named author used a similar idea to make a connection between the double layer potential operators on three-dimensional wedges and (action) Lie groupoids in \cite{Qiao18}.

However, for domains with cracks, the resulting layer potential operators are no longer Fredholm. These issues will be addressed in a forthcoming paper.

\subsection{Contents of the paper}
\label{sub:contents_of_the_paper}

We start in Sections \ref{sub:lie_groupoids_and_lie_algebroids}  to \ref{sub:groupoid_examples}
by reviewing some relevant facts about  Lie groupoids, including Lie algebroids,  groupoid $C^{*}$-algebras and pseudodifferential operators on Lie groupoids . We discuss the necessary facts about Lie groupoids and their algebroids, and give several important examples.

In Section \ref{ss.fredholm}, we review the definition of Fredholm groupoids and their characterization, relying on exhaustive families of representations, resulting on Fredholm criteria for operators on Fredholm groupoids.

Section \ref{sec:gluing_groupoids} introduces one of the main constructions of the paper, which is the gluing of a family of locally compact groupoids $(\G_i)_{i \in I}$. We give two different conditions that are sufficient to define a groupoid structure on the gluing $\G = \bigcup_{i\in I} \G_i$, and show some properties of the gluing. When each $\G_i$ is a Lie groupoid, we describe the Lie algebroid of the glued groupoid $\G$.

We define the class of \emph{boundary action groupoids} in Subsection \ref{sub:boundary_action_groupoids}. We give some examples of boundary action groupoids which occur naturally when dealing with analysis on open manifolds. We then explain the construction of the algebra of differential operators generated by a Lie groupoid $\G$, and prove the Fredholm condition given by Theorem \ref{thm:intro_fredholm}.

In the remaining sections, we consider the case of  layer potential groupoids on conical domains.
In Section \ref{s.LP_groupoids}, we describe the construction of  the relevant groupoids and give their main properties in the case with no cracks (Section \ref{ss.groupoidnocrack}).

In Section \ref{s.FredCond}, we start with checking that the layer potential groupoid is always a boundary action groupoid. We show moreover that such groupoids are Fredholm and obtain Fredholm criteria for operators on layer potential groupoids.

\vspace{0.2cm}
\emph{Acknowledgements:} We would like to thank Victor Nistor for useful discussions and suggestions, as well as the editors for fostering this joint work.

\vspace{0.3cm}
\section{Fredholm Groupoids}\label{s.fredholm}

The aim of this section is to recall some basic definitions and constructions regarding groupoids, and especially Lie groupoids (we refer to Renault's book \cite{renault80} for locally compact groupoids, and Mackenzie's books \cite{Mackenzie87, Mackenzie05} for Lie groupoids). In  Subsection  \ref{ss.fredholm} we recall the definitions and results concerning Fredholm Lie groupoids as in \cite{CNQ, CNQ17}.

\vspace{0.2cm}

\subsection{Lie groupoids and Lie algebroids}
\label{sub:lie_groupoids_and_lie_algebroids}

Let us begin with the definition of a groupoid, as in \cite{Mac1987,Ren1980}.
\begin{defn}
  A \emph{groupoid} is a small category in which every morphism is invertible.
\end{defn}
\begin{rem} \label{rem:defn_groupoid_better}
  It is often more useful to see a groupoid $\G$ as a set of \emph{objects} $\G^{(0)}$ and a set of \emph{morphisms} $\G^{(1)}$. We will often identify $\G$ with its set of morphisms $\G^{(1)}$. Any element $g \in \G$ has a domain $d(g)$ and range $r(g)$ in $\G^{(0)}$, as well as an inverse $\iota(g) \eqdef g^{-1} \in \G$. To every object $x \in \G^{(0)}$ corresponds a (unique) unit map $u(x) \in \G$. Finally, the product of two morphisms defines a map $\mu$ from the set of \emph{composable arrows}
  \begin{equation*}
    \G^{(2)} \eqdef \{(g,h) \in \G \times \G, d(g) = r(h) \}
  \end{equation*}
   to $\G$. The groupoid $\G$ is completely determined by the pair $(\G^{(0)}, \G^{(1)})$, together with the five structural maps $d,r,\iota,u$ and $\mu$ \cite{Ren1980,Mac1987}.
\end{rem}

We now fix some notations for later. When $(g,h) \in \G^{(2)}$, the product will be written simply as $gh \eqdef \mu(g,h)$.
We shall also write $\G \st M$ for a groupoid $\G$ with objects $\G^{(0)} = M$. Finally, let $A \subset M$, and put $\G_A \eqdef d^{-1}(A)$ and $\G^A \eqdef r^{-1}(A)$. 
The groupoid $\G|_A \eqdef \G^A \cap \G_A$ will be called the \emph{reduction} of $\G$ to $A$.
The \emph{saturation} of $A$ is the subset of $M$ defined by
$$\G \cdot A =\{r(g) \mid g\in \G, \, d(g) \in A \} = r\big(d^{-1} (A)  \big).$$
In particular, if $A$ is a point $\{x\} \subset M$,
then $\G \cdot x$ is the {\em orbit} of $x$ in $M$.

\begin{defn}
  A \emph{locally compact groupoid} is a groupoid $\G \st M$ such that :
  \begin{enumerate}
    \item $\G$ and $M$ are locally compact spaces, with $M$ Hausdorff,
    \item the structural morphisms $d,r,\iota,u$ and $\mu$ are continuous, and
    \item $d : \G \to M$ is surjective and open.
  \end{enumerate}
\end{defn}
Note that these conditions imply that $r : \G \to M$ is surjective and open as well. Only the unit space $M$ is required to be Hausdorff in the general definition, so $\maG$ may be non-Hausdorff. In this paper, we will \emph{not} assume the space $\G$ to be Hausdorff, and we will always specify when it is so. We give several examples of groupoids in Subsection \ref{sub:groupoid_examples} below.

\vspace{0.2cm}

\emph{Lie groupoids} are groupoids with a smooth structure. In the analysis of problems on singular spaces,
it is crucial to distinguish between smooth manifolds without corners and manifolds with boundary or corners. The manifolds we consider here may have corners, which occurs in many applications; for example, this is the case when one has to remove a singularity by blowing-up a submanifold \cite{Nis2015, Vas2001}. Thus, in our setting, a \emph{manifold} $M$ is a \emph{second-countable} space that is locally modelled on open subsets of $[0,\infty[^n$, with smooth coordinate changes \cite{Nis2015}. Note that $M$ is not necessarily Hausdorff, unless stated explicitly. By a \emph{smooth} manifold, we shall mean a manifold without corners.

By definition, every point $p \in M$ of a {\em manifold with corners} has a coordinate neighborhood
diffeomorphic to $[0,1)^k \times (-1,1)^{n-k}$ such that the transition functions are smooth.
The number $k$ is called the {\em depth} of the point $p$.
The set of {\em inward pointing tangent vectors} $v\in T_p(M)$ defines
a closed cone denoted by $T^+_p(M)$.

A smooth map $f; M_1 \rightarrow M_2$ between
two manifolds with corners is called a {\em tame submersion} provided that
$df(v)$ is an inward pointing vector of $M_2$ if and only if $v$ is an inward pointing vector of $M_1$.
Lie groupoids are defined as follows.

\begin{defn}
  A \emph{Lie groupoid} is a groupoid $\G \st M$ such that
  \begin{enumerate}
    \item $\G$ and $M$ are manifolds with corners, with $M$ Hausdorff,
    \item the structural morphisms $d,r, \iota$ and $u$ are smooth,
    \item the range $d$ is a tame submersion, and
    \item the product $\mu$ is smooth.
  \end{enumerate}
\end{defn}
We remark that $(3)$ implies that each fiber $\maG_x=d^{-1}(x) \subset \maG$ is a smooth manifold (without corners) \cite{CNQ, Nistor_Comm}.
In particular, the Lie groupoids we use are locally compact and second countable spaces, but they are not necessarily Hausdorff (and many important examples, coming in particular from foliation theory \cite{Con1982}, yield non Hausdorff groupoids). A slightly more general class of groupoids, also useful in applications, is that of \emph{continuous family groupoids}, for which we assume smoothness along the fibers only, and continuity along the units \cite{Pat2001,LMN2000}.

If $G$ is a Lie group (which is a particular example of Lie groupoid, see Example \ref{ex:lie_groups_bundle}), then its tangent space over the identity element has a structure of Lie algebra, induced by the correspondence with right-invariant vector fields on $G$. The corresponding construction for a general Lie groupoid is that of a \emph{Lie algebroid} \cite{Mac1987,Mac2005}.

\begin{defn}
  Let $M$ be a manifold with corners and $A \to M$ a smooth vector bundle. We say that $A$ is a \emph{Lie algebroid} if there is a Lie algebra structure on the space of sections $\Gamma(A)$, together with a vector bundle morphism $\rho : A \to TM$ covering the identity, and such that the induced morphism
    \begin{equation*}
      \rho : \Gamma(A) \to \Gamma(TM)
    \end{equation*}
    is a Lie algebra morphism. In that case, the map $\rho$ is called the \emph{anchor} of $A$.
\end{defn}

\begin{ex} \label{ex:lie_algebroid_of_a_groupoid}
  Let $\G \st M$ be a Lie groupoid. Then $u : M \to \G$ is an embedding and we can consider
  \begin{equation*}
    \A\G \eqdef (\ker d_*)|_M = \bigsqcup_{x\in M} T_x\G_x,
  \end{equation*}
  which is a vector bundle over $M$. The smooth sections of $\A\G$ are in one-to-one correspondence with the right-invariant vector fields on $\ker d_*$, which form a Lie algebra. This gives $\A\G$ a Lie algebroid structure, whose anchor is given by $r_*$.
\end{ex}

The definition of Lie algebroid morphism was given for instance in \cite{Mac1987, Mac2005}.

\begin{defn}
  Let $A \to M$ and $B \to N$ be two Lie algebroids. A \emph{morphism of Lie algebroids} from $A$ to $B$ is pair $(\Phi,\phi)$ such that
  \begin{enumerate}
    \item $\phi : M \to N$ is a smooth map and $\Phi : A \to B$ a vector bundle morphism covering $\phi$,
    \item $\Phi$ induces a Lie algebra morphism $\Gamma(A) \to \Gamma(B)$, and
    \item the following diagram is commutative:
      \begin{equation*}
        \begin{tikzcd}
          A \arrow[r, "\Phi"] \arrow[d, "\rho_A"] & B \arrow[d, "\rho_B"] \\
        TM \arrow[r, "\phi_*"] & TN,
        \end{tikzcd}
      \end{equation*}
      with $\rho_A$ and $\rho_B$ the respective anchor maps.
  \end{enumerate}
\end{defn}

A Lie algebroid $A \to M$ is said to be \emph{integrable} whenever there is a Lie groupoid $\G \st M$ such that $\A\G$ is isomorphic to $A$. Not every Lie algebroid is integrable: the relevant obstruction is discussed in \cite{CF2011}. However, some classical results of Lie algebra theory remain true in this more general case. In order to state those, we shall say that a Lie groupoid $\G \to M$ is $d$-connected (respectively $d$-simply-connected) if each of its $d$-fibers $\G_x$ is connected (respectively simply-connected), for every $x \in M$. A proof of the following two results may be found in \cite{Nis2000,MM2002}.

\begin{thm}[Lie I]
  \label{thm:Lie1}
  Let $A \to M$ be a Lie algebroid. If $A$ is integrable, then there is a (unique) $d$-simply-connected groupoid integrating $A$.
\end{thm}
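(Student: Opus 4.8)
The plan is to transpose to groupoids the classical fact that a connected Lie group admits a unique simply-connected cover with the same Lie algebra, carrying out the universal-cover construction fiberwise over $M$. By hypothesis $A$ is integrable, so fix a Lie groupoid $\G \st M$ with $\A\G \cong A$. First I would reduce to the $d$-connected case: let $\G^0 \subset \G$ be the union, over $x \in M$, of the connected component of $u(x)$ in the $d$-fiber $\G_x$. Since each $\G_x$ is an honest manifold (the remark following the definition of Lie groupoid) and $d$ is open, $\G^0$ is an open subgroupoid, hence itself a Lie groupoid; moreover $\A\G^0 = \A\G \cong A$, because the algebroid $(\ker d_*)|_M$ records only first-order data along the units and is therefore insensitive to passing to the component of the identity in each fiber. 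Thus I may assume $\G$ is $d$-connected.

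For existence, I would construct the \emph{monodromy groupoid} $\widetilde\G$. As a set, let $\widetilde\G_x$ be the universal cover of the connected fiber $\G_x$ based at $u(x)$, i.e. the set of homotopy classes, rel endpoints, of paths $\gamma:[0,1]\to\G_x$ with $\gamma(0)=u(x)$; put $\widetilde\G \eqdef \bigsqcup_{x\in M}\widetilde\G_x$, with $\tilde d[\gamma]=x$, $\tilde r[\gamma]=r(\gamma(1))$, and let $\pi:\widetilde\G\to\G$, $[\gamma]\mapsto\gamma(1)$, be the fiberwise covering projection. The product is defined by concatenating $d$-paths after translating one factor: for composable $[\gamma],[\delta]$ with endpoints $g,k$, right translation $R_k$ carries $\G_{r(k)}$ diffeomorphically onto $\G_{d(k)}$, so $\delta$ followed by $t\mapsto R_k(\gamma(t))$ is a $d$-path from a unit to $gk$, and its class gives a well-defined multiplication on homotopy classes. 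I would then topologize $\widetilde\G$ so that $\pi$ is a fiberwise local diffeomorphism and $\tilde d$ a submersion, and verify that the five structural maps are smooth and $\tilde d$ is a tame submersion, making $\widetilde\G \st M$ a Lie groupoid. Since $\pi$ restricts to the universal covering on each fiber, $d\pi$ is an isomorphism at the base points, whence $\A\widetilde\G \cong \A\G \cong A$; and each $\widetilde\G_x$ is simply-connected by construction, so $\widetilde\G$ is $d$-simply-connected and integrates $A$.

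For uniqueness, suppose $\G_1,\G_2 \st M$ are both $d$-simply-connected and integrate $A$. By the local Lie theory for algebroids, the germ of a Lie groupoid along its unit space is determined by its algebroid, so there is an isomorphism of groupoid structures between open neighborhoods of $u(M)$ in $\G_1$ and in $\G_2$. I would extend this to a global isomorphism by the monodromy principle: every element of $\G_1$ is the endpoint of a $d$-path issuing from a unit, the local isomorphism propagates along such a path by homotopy lifting, and $d$-simple-connectivity of $\G_1$ ensures the result is independent of the chosen path. Running the argument in the opposite direction yields an inverse, so $\G_1 \cong \G_2$; in particular every $d$-simply-connected integration is isomorphic to $\widetilde\G$.

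I expect the main obstacle to be analytic rather than formal: endowing $\widetilde\G$ with a manifold-with-corners structure for which it is a genuine Lie groupoid. One must topologize the space of homotopy classes of $d$-paths, check smoothness of multiplication and inversion, and — most delicately — confirm that $\tilde d$ remains a tame submersion when $M$ has corners while each fiber $\widetilde\G_x$ stays corner-free. These verifications, together with the local uniqueness invoked above, are carried out in \cite{Nis2000, MM2002}, to which I would defer for the detailed construction.
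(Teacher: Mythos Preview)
The paper does not supply its own proof of this theorem: it simply states the result and refers the reader to \cite{Nis2000,MM2002}. Your outline is precisely the standard monodromy-groupoid construction carried out in those references (pass to the $d$-connected component, take fiberwise universal covers, define multiplication by right-translated concatenation of $d$-paths, and argue uniqueness via propagation of a local isomorphism along $d$-paths using simple connectivity), so there is nothing to compare against beyond noting that your sketch is faithful to the sources the paper cites. Your own caveat is the right one: the substantive work lies in putting a smooth (manifold-with-corners) structure on $\widetilde\G$ and checking that $\tilde d$ is a tame submersion, and this is exactly what the cited papers supply.
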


\begin{thm}[Lie II] \label{thm:Lie2}
  Let $\phi : A \to B$ be a morphism of integrable Lie algebroids, and let $\maG$ and $\maH$ be integrations of $A$ and $B$. If $\G$ is $d$-simply-connected, then there is a (unique) morphism of Lie groupoids $\Phi : \maG \to \maH$ such that $\phi=\Phi_*$.
\end{thm}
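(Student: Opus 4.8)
The plan is to reduce the statement to the integration result already available through Theorem~\ref{thm:Lie1}, by encoding the morphism $\phi$ as a \emph{subalgebroid} of a product and integrating that subalgebroid; the sought-after $\Phi$ then materialises as the graph of the resulting subgroupoid. The hypothesis that $\maG$ is $d$-simply-connected will be used only at the very end, to upgrade a covering map into a diffeomorphism.

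First I would form the product Lie groupoid $\maG \times \maH \st M \times N$, whose Lie algebroid is the product $A \times B \to M \times N$ (with anchor $\rho_A \times \rho_B$ and componentwise bracket, since $\A(\maG\times\maH) = \A\maG \times \A\maH$). Writing $f : M \to N$ for the base map of $\phi$ and $\Gamma_f = \{(x,f(x))\} \subset M\times N$ for its graph, the bundle map $a \mapsto (a, \phi(a))$ identifies $A$ with a subbundle $k \subset (A\times B)|_{\Gamma_f}$ over $\Gamma_f \cong M$. The compatibility of anchors and the bracket-preservation in the definition of a Lie algebroid morphism are \emph{exactly} the conditions that make $\rho_{A\times B}(k)$ tangent to $\Gamma_f$ and $\Gamma(k)$ closed under the bracket; hence $k$ is a subalgebroid of $A \times B$, and the first projection $\mathrm{pr}_A : k \to A$ is an isomorphism of Lie algebroids.

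Next I would integrate $k$. The sections of $k$ span a right-invariant, $d$-fibrewise distribution $D$ on $\maG\times\maH$; since $k$ is closed under the bracket, $D$ is involutive, and I take $\maK \subset \maG \times \maH$ to be the $d$-connected immersed subgroupoid whose $d$-fibres are the leaves of $D$ through the units over $\Gamma_f$. Then $\maK$ integrates $k$, and the first projection $p_1 : \maK \to \maG$ integrates the isomorphism $\mathrm{pr}_A$. Because $(p_1)_* = \mathrm{pr}_A$ is an isomorphism, path-lifting along right-invariant vector fields shows that $p_1$ restricts on each $d$-fibre to a \emph{covering} $\maK_{(x,f(x))} \to \maG_x$; as $\maG_x$ is simply connected and each leaf is connected, this covering is a diffeomorphism. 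Being moreover a bijection on units, $p_1$ is an isomorphism of Lie groupoids, and I set $\Phi := p_2 \circ p_1^{-1} : \maG \to \maH$ with $p_2$ the second projection. By construction $\Phi_* = (p_2)_* \circ \mathrm{pr}_A^{-1} = \phi$.

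Uniqueness is the easy half and needs only $d$-\emph{connectedness}: two integrations of $\phi$ agree to first order along the units and intertwine the flows of the right-invariant vector fields, hence coincide on the subset of $\maG$ swept out by such flows from the units, which is all of $\maG$. (One also observes that the image of the connected fibre $\maG_x$ must lie in the $d$-identity component of $\maH$, so assuming $\maH$ is $d$-connected costs nothing.) The genuine obstacle is the integration step above: producing $\maK$ as an honest immersed subgroupoid from the involutive distribution $D$, and—above all—establishing the path-lifting/covering property of $p_1$, which is precisely where simple-connectivity of the $d$-fibres of $\maG$ forces $p_1$ to be invertible rather than merely \'etale. A technically lighter alternative, which I would keep in reserve, is the $A$-path model of $\maG$: represent $\maG \cong \maG(A)$ as $A$-paths modulo $A$-homotopy, push an $A$-path $a(t)$ forward to the $B$-path $\phi(a(t))$ (well defined by the anchor square), check that $A$-homotopies are sent to $B$-homotopies using bracket-preservation, and compose the induced $\maG(A) \to \maG(B)$ with the canonical covering morphism $\maG(B) \to \maH$ integrating $\mathrm{id}_B$.
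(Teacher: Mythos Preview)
The paper does not actually supply a proof of this theorem: it merely records the statement and refers the reader to \cite{Nis2000,MM2002} for the argument. There is therefore no in-paper proof to compare your proposal against.

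That said, your proposal is a sound and standard route to Lie~II. The graph construction---packaging $\phi$ as a subalgebroid of $A\times B$ over $\Gamma_f$, integrating it to an immersed subgroupoid $\maK\subset \maG\times\maH$, and then inverting the first projection using $d$-simple-connectedness---is essentially the argument one finds in Mackenzie--Xu and in \cite{MM2002}. The $A$-path alternative you sketch is the Crainic--Fernandes viewpoint and is equally legitimate. One small caution: the step ``$\maK$ is an immersed subgroupoid'' deserves a bit more care than a single sentence, since the leaves of the foliation tangent to $D$ need to be shown closed under multiplication and inversion (this is where right-invariance of $D$ is used), and the leaf topology on $\maK$ need not coincide with the subspace topology. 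None of this is a gap in the strategy, but in a full write-up it is where the real work hides.
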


\vspace{0.2cm}

\subsection{Groupoid $C^{*}$-algebras}\label{ss.groupoidCalg}

We assume all our locally compact groupoids $\G \tto M$ to be endowed with a fixed (right) Haar system, denoted $(\lambda_x)_{x \in M}$. Here $\lambda_x$ is a measure on the $d$-fiber $\G_x$, and the family $(\lambda_x)_{x\in M}$ should be invariant under the right action of $\G$ and satisfy a continuity condition \cite{Ren1980}. All Lie groupoids have well-defined (right) Haar systems. For simplicity, we also assume that all our groupoids are \emph{Hausdorff} (some adjustments to the definition must be considered in the non-Hausdorff case \cite{Con1982}).

 \smallskip

 To any locally compact groupoid $\maG$ (endowed with a Haar system),
there are associated two basic $C^*$-algebras, the {\em full} and {\em
  reduced} $C^*$-algebras $\Cs{\maG}$ and $\rCs{\maG}$, whose
  definitions we recall now. Let $\maC_c(\maG)$ be the space of continuous, complex valued, compactly supported functions on $\maG$. We endow $C_c(\G)$ with the convolution product
  \begin{equation*}
    \varphi * \psi(g) = \int_{\G_{d(g)}} \varphi(gh^{-1})\psi(h) \de\lambda_{d(g)}(h)
  \end{equation*}
and the usual involution $\varphi^*(g) \ede \overline{\varphi(g^{-1})}$, thus defining an associative $*$-algebra structure \cite{Ren1980}.

  There exists a natural algebra norm on $\maC_c(\maG)$ defined by
\begin{equation*}
  \| \varphi\|_1 \ede \max \, \Bigl\{ \, \sup_{x\in M}\int_{\maG_x} \vert
  \varphi\vert\de\lambda_x, \, \sup_{x\in M}\int_{\maG_x}\vert
  \varphi^*\vert\de\lambda_x \, \Bigr\}.
\end{equation*}
The completion of $\maC_c(\maG)$ with respect to the norm $\|\cdot
\|_1$ is denoted $L^1(\maG)$.

For any $x\in M$, the algebra $\maC_c(\maG)$ acts as bounded operators on $L^2(\maG_x,\lambda_x)$.
Define for any $x\in M$ the
\emph{regular} representation $\pi_x\, \colon\, \maC_c(\maG) \to
\maL(L^2(\maG_x,\lambda_x))$ by
\begin{equation*}
  (\pi_x(\varphi)\psi) (g) \ede \varphi * \psi(g) \ede \int_{\maG_{d(g)}}
  \varphi(gh^{-1}) \psi(h) d\lambda_{d(g)}(h) \,, \quad \varphi \in
  \maC_c(\maG) \,.
\end{equation*}
We have $\|\pi_x(\varphi)\|_{L^2(\maG_x)} \leq\|\varphi \|_{1}$.

\begin{definition}\label{def.regular}
We  define the \emph{reduced $C\sp{\ast}$-algebra}
$C\sp{\ast}_{r}(\maG)$ as the completion of $\maC_c(\maG)$ with
respect to the norm
\begin{equation*}
  \| \varphi\|_r \ede \sup\limits_{x \in M}\|\pi_x(\varphi)\| \,
\end{equation*}
The \emph{full $C\sp{\ast}$-algebra} associated to $\maG$, denoted
$C\sp{\ast}(\maG)$, is defined as the completion of $\maC_c(\maG)$
with respect to the norm
\begin{equation*}
  \| \varphi\| \ede \sup\limits_\pi\|\pi(\varphi)\| \,,
\end{equation*}
where $\pi$ ranges over all {\em contractive} $*$-representations of
$\maC_c(\maG)$, that is, such that $\| \pi(\varphi)\|\leq \| \varphi \|_1$, for all $\varphi \in \maC_c(\maG)$.

The groupoid~$\maG$ is said to be \emph{metrically amenable} if the
canonical surjective $*$-homomorphism $\Cstar(\maG) \to
\Cstar_{r}(\maG)$, induced by the definitions above, is also
injective.

\end{definition}

Let $\maG \tto M$ be a second countable, locally compact groupoid with
a Haar system. Let $U \subset M$ be an open $\maG$-invariant subset,
$F := M \smallsetminus U$.
Then, by the classic results of \cite{MRW87, MRW96, renault91},  $C\sp{\ast}(\maG_U)$ is a closed
  two-sided ideal of $C\sp{\ast}(\maG)$ that yields the short exact
  sequence
\begin{equation}\label{renault.exact_item1}
  0\to C\sp{\ast}(\maG_U) \to
  C\sp{\ast}(\maG)\mathop{\longrightarrow}\limits^{\rho_F}
  C\sp{\ast}(\maG_{ F })\to 0 \,,
\end{equation}
where $\rho_{F}$ is the (extended) restriction map.
 If $\maG_F$ is metrically
  a\-me\-nable, then one also has the exact sequence
\begin{equation}\label{renault.exact_item3}
  0\to C\sp{\ast}_{r}(\maG_U)\to C\sp{\ast}_{r}(\maG)
  \mathop{\xrightarrow{\hspace*{1cm}}}\limits^{(\rho_F)_{r}}
  C\sp{\ast}_{r}(\maG_F)\to 0 \,.
\end{equation}

It follows from the Five Lemma that if the groupoids $\maG_F$ and $\maG_U$ (respectively, $\maG$)
are metrically amenable, then $\maG$ (respectively, $\maG_U$) is
also metrically amenable.
We notice that these exact sequences correspond to a disjoint union decomposition $\maG = \maG_F \sqcup
\maG_U.$

\vspace{0.2cm}
\subsection{Pseudodifferential operators on Lie groupoids}\label{ss.ops.grpds}
We recall in this subsection the construction of pseudodifferential operators on
Lie groupoids \cite{LMN, LN, Monthubert01, Monthubert03, MonthubertPierrot, NWX}.
Let $P=(P_x)_{x\in M}$ be a smooth family of
pseudodifferential operators acting on $\maG_x:=d^{-1}(x)$. The family
$P$ is called \emph{right-invariant} if $P_{r(g)}U_g=U_gP_{d(g)}$, for all
$g\in \maG$, where
\begin{equation*}
  U_g : \maC^\infty(\maG_{d(g)}) \rightarrow
  \maC^\infty(\maG_{r(g)}), \,\ (U_gf)(g')=f(g'g).
\end{equation*}
Let $k_x$ be the distributional kernel of $P_x$, $x\in M$. The \emph{support} of $P$ is
\begin{equation*}
  \text{supp}(P):= \overline{\bigcup_{x\in M}\text{supp}(k_x)}  \subset \{(g,g'), \ d(g)=d(g')\} \subset \maG \times \maG,
\end{equation*}
  since $\text{supp}(k_x)\subset
  \maG_x\times\maG_x$. Let $\mu_1(g',g) :=
  g'g^{-1}$. The family $P = (P_x)$ is called \emph{uniformly
    supported} if its \emph{reduced support} $\text{supp}_\mu(P) :=
  \mu_1(\text{supp}(P))$ is a compact subset of $\maG$.

\begin{definition}\label{def.C*}
 The space $\Psi^{m}(\maG)$ of \emph{pseudodifferential operators of
   order $m$ on a Lie groupoid} $\maG$ with units $M$ consists
   of smooth families of pseudodifferential operators $P=(P_x)_{x\in M}$, with $P_x\in \Psi^m(\maG_x)$, which are {uniformly
   supported} and {right-invariant}.
\end{definition}

We also denote $\Psi^\infty(\maG) := \bigcup_{m\in
  \mathbb{R}}\Psi^m(\maG)$ and $\Psi^{-\infty}(\maG) :=
\bigcap_{m\in \mathbb{R}}\Psi^m(\maG)$. If $k_x$ denotes the distributional kernel of $P_x$, $x\in M$,
then the formula
$$k_P(g):=k_{d(g)}(g, d(g))$$
defines a distribution on the groupoid $\maG$,
with $\text{supp}( k_p) = \text{supp}_\mu(P)$ compact, smooth
outside $M$ and given by an oscillatory integral on a neighborhood of $M$.
If $P\in \Psi^{-\infty}(\maG)$, then $P$ is a convolution
operator with
smooth, compactly supported kernel. Thus $\Psi^{-\infty}(\maG)$
identifies with the smooth convolution algebra $\maC_c^\infty(\maG)$.  In
particular, we can define
\begin{equation*}
  \|P\|_{L^1(\maG)} := \sup\limits_{x\in M} \Big\{ \
  \int_{\maG_x}|k_P(g^{-1})|\, d\mu_x(g),\,\, \int_{\maG_x}|k_P (g)|\,
  d\mu_x(g)\ \Big\}.
\end{equation*}

The algebra $\Psi^\infty(\G)$ is \enquote{too small} to contain resolvents of differential operators. Thus we consider suitable closures. For each $x\in M$, the {\em regular representation} $\pi_x$ extends to $\Psi^\infty(\maG)$,
by defining by $\pi_x(P)=P_x$. If $P \in \Psi^{-n-1}(\maG)$, then $P$ has a continuous distribution kernel and
$$\|\pi_x(P)\|_{L^2(\maG_x)} \leq\|P\|_{L^1(\maG)}.$$
If $P \in \Psi^0(\G)$, then $\pi_x(P) \in \B(L^2(\G_x))$. We define the {\em reduced $C^*$--norm} by
\begin{equation*}
  \|P\|_r = \sup\limits_{x\in M}\|\pi_x(P)\| = \sup\limits_{x\in
  M}\|P_x\|.
\end{equation*}
Let $L^0_0(\G)$ be the completion of $\Psi^0(\G)$ for the reduced norm. Note that $C^*_r(\G)$ is the completion of $\Psi^{-\infty}(\G)$ for $\|.\|_r$, hence $C^*_r(\G)$ embeds as an ideal of $L^0_0(\G)$.

We consider similar completions for operators of arbitrary order. To that end, we endow the fibers $(\G_x)_{x \in M}$ with a right-invariant metric and consider the associated Laplacian $\Delta \in \Psi^2(\G)$. The Sobolev space $H^s(\G_x)$ is defined as the domain of $(1+\Delta_x)^{ \frac{s}{2} }$ if $s \ge 0$ (and by duality for $s < 0$). We set $L^m_s(\G)$ to be the completion of $\Psi^m(\G)$ with respect to the norm
\begin{equation*}
  \|P\|_{m,s} \ede \sup_{x \in M} \|P_x\|,
\end{equation*}
where $P_x$ is seen as a bounded operator from $H^s(\G_x)$ to $H^{s-m}(\G_x)$ \cite{Grosse.Schneider.2013, LN}.
 Moreover, let
\begin{equation*}
 \maW^{m}(\maG) \ede \Psi^{m}(\maG) + \bigcap_{s\in\R} L^{-\infty}_{s}(\maG)\,.
\end{equation*}
Then $\maW^{m}(\maG) \subset L^{m}_{s}(\maG)$ and $\maW^{\infty}(\maG)$
is an algebra of pseudodifferential operators that contains the inverses
of its $L^2$-invertible operators.

Assume now that there is an open, dense and $\G$-invariant subset $U \subset M$ such that $\G|_U \simeq U \times U$; this will be a natural requirement in Subsection \ref{ss.fredholm}. In that case each fibers $\G_x$, for $x \in U$ is diffeomorphic to $U$. Therefore any right-invariant metric on the fibers $(\G_x)_{x \in M}$ induces a metric on $U$. The regular representations $\pi_x$ are all equivalent when $x \in U$, so we define the \emph{vector representation}
\begin{equation*}
  \pi_0 : C^*_r(\G) \to \B(L^2(U))
\end{equation*}
as the equivalence class of all $\pi_x$, where $x \in U$. Then $\pi_0$ extends as a $C^*$-algebra morphism
\begin{equation*}
  \pi_0 : L^0_0(\G) \to \B(L^2(U)),
\end{equation*}
and as a bounded linear map
\begin{equation*}
  \pi_0 : L^m_s(\G) \to \B(H^s(U),H^{s-m}(U)).
\end{equation*}

\begin{remark} \label{rem:Hausdorff_pi_0_injective}
  When $\G$ is Hausdorff, which will be the case below, a result of Khoshkam and Skandalis \cite{KS2002} implies that the vector representation $\pi_0 : C_r(\G)\to \B(L^2(U))$ is \emph{injective}. In that case, the algebra $L^0_0$ embeds as a subalgebra of $\B(L^2(U))$.
\end{remark}

\vspace{0.2cm}

\subsection{Examples of Lie groupoids}
\label{sub:groupoid_examples}

Let us now give a few common examples of Lie groupoids that will have a role in our constructions.

\begin{example}[Bundles of Lie groups] \label{ex:lie_groups_bundle}
Any Lie group $G$ can be regarded as a Lie groupoid $\maG=G$ with exactly one unit $M= \{e\}$,
the identity element of $G$, and obvious structure maps. Its Lie algebroid is the Lie algebra of the group. In that case $\Psi^m(\maG) \simeq \Psi^m_{\text{prop}}(G)^G$,
the algebra of right translation invariant and properly supported pseudodifferential operators on $G$.

More generally, we can let $\maG \tto B$ be a locally trivial bundle of groups, with fiber a Lie group $G$. In that case $d = r$, and $\G$ is metrically amenable if, and only if, the group $G$ is amenable.
\end{example}

The following examples are more involved, and will be useful in what follows.

\begin{ex}[The pair groupoid] \label{ex:pair_groupoid}
  Let $M$ be a smooth manifold, and consider the Lie groupoid $\G = M\times M$ as the
groupoid having exactly one arrow between any two units, with structural morphisms as follow: the domain is $d(x,y) = y$, the range $r(x,y) = x$, and the product is given by $(x,y)(y,z) = (x,z)$. Thus $u(x) = (x,x)$ and $\iota(x,y) = (y,x)$. This example is called the \emph{pair groupoid} of $M$. The Lie algebroid of $\G$ is isomorphic to $TM$.

In this case, we have $\Psi^m(\maG) \simeq \Psi^m_{\text{comp}}(M)$,
the algebra of compactly supported pseudodifferential operators on $M$. For any $x\in M$, the regular
representation $\pi_x$ defines an isomorphism between
$C\sp{\ast}(M\times M)$ and the ideal of compact operators in
$\maL(L^2(M))$. In particular, all pair groupoids are metrically
amenable.
\end{ex}

\begin{ex}[Actions groupoids] \label{ex:action_groupoid}
  Let $X$ be a smooth manifold and $G$ a Lie group acting on $X$ smoothly and from the right. The \emph{action groupoid} generated by this action is the graph of the action, denoted by $X \rtimes G$. Its set of arrows is $X\times G$, together with the structural morphisms $r(x,g) \eqdef x$, $d(x,g) \eqdef x \cdot g^{-1}$ and $(x,h)(x \cdot h^{-1},g) \eqdef (x,gh)$.

  The Lie algebroid of $X \rtimes G$ is denoted by $X \rtimes \g$. As a vector bundle, it is simply $X \times \g$, where $\g$ is the Lie algebra of $G$. Its Lie bracket is generated by the one of $\g$: namely, if $\tilde{\xi}, \tilde{\eta}$ are constant sections of $X \times \g$ such that $\tilde{\xi}(x) = \xi$ and $\tilde{\eta}(x) = \eta$ for all $x \in \xi$, then $[\tilde{\xi},\tilde{\eta}]_{X\rtimes \g}$ is the constant section on $\xi$ everywhere equal to $[\xi,\eta]_\g$. The anchor $\rho : X \rtimes \g \to TX$ is given by the \emph{fundamental vector fields} generated by the action:
  \begin{equation*}
    \rho(x,\xi) = \left. \frac{d}{dt} \right|_{t=0} \left( x \cdot \exp(t\xi) \right)
  \end{equation*}
  for all $x \in X$ and $\xi \in \g$. The study of such groupoids relates to that of crossed-product algebras, which have been much studied in the literature \cite{Wil2007} (see also \cite{GI2006,MNP2017}).

One case of interest here is when $\maG:=[0,\infty) \rtimes (0,\infty) $ is the transformation groupoid
with the action of $(0,\infty)$ on $[0,\infty)$ by dilation. Then the $C^*$-algebra associated to $\cG$ is the algebra of Wiener-Hopf
operators on $\RR^+$, and its unitalization is the algebra of Toeplitz
operators  \cite{MRen}.
\end{ex}

\begin{ex}[Fibered pull-back groupoids]\ \label{ex:pullback_groupoid}
  Let $M,N$ be manifolds with corners, and $f : M \to N$ a surjective tame submersion. Assume that we have a Lie groupoid $\maH \st N$.  An important
generalization of the pair groupoid is the \emph{fibered pull-back} of $\maH$ along $f$, defined by
  \begin{equation*}
    f^{\pbg}(\maH) = \left\{ (x, g , y) \in M \times \G \times M, r(g) = f(x), d(g) = f(y) \right\}
  \end{equation*}
  with units $M$. The domain and range are given by $d(x,g,y) = y$ and $r(x,g,y) = x$. The product is $(x,g,y)(y,g',y') \eqdef (x,gg',y')$.

  The groupoid $f^\pbg(\maH)$ is a Lie groupoid,   which is a subgroupoid of the product of the pair groupoid $X\times X$ and $\maH$, and whose Lie algebroid is given by the \emph{thick pull-back}
  \begin{equation*}
    f^\pbg(\A\maH) \eqdef \left\{ (\xi,X) \in \A\maH \times TM, \rho(\xi) = f_*(X) \right\}.
  \end{equation*}
  See \cite{Mac1987,Mac2005,CNQ2017} for more details.

Let $\maH \tto B$ be a locally trivial bundle of groups (so $d = r$)
 with fiber a locally compact group $G$.  Also, let $f : M \to
 B$ be a continuous map that is a local fibration. Then $f\pullback
 (\maH)$ is a locally compact groupoid with a Haar system. If $G$ is a Lie group, $M$ is a manifold with corners and $f$ is a tame submersion, then $f\pullback
 (\maH)$ is a Lie groupoid.
Again,  it is metrically amenable if, and only if,
 the group $G$ is
 amenable.
\end{ex}

\begin{example}[Disjoint unions]\label{ex.help-for-lp}
Let  $M$ be a {smooth } manifold and let $\maP=\{M_{i}\}_{i=1}^{p}$ be a \emph{finite} partition of $M$ into smooth disjoint, closed submanifolds $M_{i}\subset M$ (since $\maP$ is finite, $M_{i}$ is also open, $i=1,..., p$, and the sets $M_{i}$ are always given by unions of connected components of $M$).
Let $f: M \to \maP$, $x \mapsto M_{i}$, with $x\in M_{i}$, be the quotient map.
Then $\maP$ is discrete and $f$ is locally constant,
so any Lie groupoid $\maH \tto \maP$ yields a Lie groupoid $f\pullback(\maH)\tto M$.
In particular, if $\maH=\maP$ as a (smooth, discrete) manifold, then
$f\pullback(\maP)$ is the topological disjoint union
$$f\pullback(\maP)= \bigsqcup_{i=1}^{p}(M_{i} \times M_{i}).$$
Let $G$ be a Lie group and $\maH := B \times
G$, the product of a manifold and a Lie group, then
$$f\pullback(\maH)= \bigsqcup_{i}^{p}(M_{i} \times M_{i}) \times G.$$
\end{example}

\begin{example}[$b$-groupoid]\label{bgrpd}

Let $M$ be a manifold with smooth boundary and let $\cV_b$ denote the
class of vector fields on $M$ that are tangent to the boundary.
The associated groupoid was
defined in \cite{MelroseAPS, Monthubert03, NWX}.
Let
$$
    \cG_b:=\Big( \bigcup\limits_{j} \RR^+\times (\pa_j M)^2\Big)
    \quad \cup \quad M_0^2,
$$
where $M_0^2$ denotes the pair groupoid of $M_0:= int(M)$ and $\pa_j
M$ denote the connected components of $\pa M$. Then $\cG_b$ can be
given the structure of a Lie groupoid with units $M$, given locally by a transformation groupoid.
It integrates the so-called $b$-tangent bundle ${^bTM}$, that is, $A(\cG_b)= {^bTM}$, the Lie algebroid whose space of sections is given by vector fields tangent to the boundary. The
pseudodifferential calculus obtained is Melrose's small $b$-calculus with compact supports. See
\cite{MelroseAPS, Monthubert03, MonthubertPierrot, NWX} for details.
\end{example}

\smallskip
\subsection{Fredholm groupoids}\label{ss.fredholm}

The classes of examples we have seen in the previous section, as wide ranging as they are, all share one common property: they fall in the framework of Fredholm groupoids (under certain suitable conditions for each case).
Fredholm groupoids were introduced in \cite{CNQ, CNQ17} as groupoids for which an operator is Fredholm if,
and only if, its principal symbol and all its \enquote{boundary restrictions}
are invertible (in a sense to be made precise below). We
review their definition and properties in this subsection.
\vspace{0.2cm}

Let $\maG \tto M$ be a Lie groupoid with $M$ compact, and assume that $U \subset M$
is an open, $\maG$-invariant subset such that $\maG_U \simeq U \times U$ (the pair groupoid, see Example \ref{ex:pair_groupoid}).
Recall from Subsection \ref{ss.ops.grpds} the definition of the vector representation $\pi_0 \colon C_r\sp{\ast}(\maG) \to \maL(L^2(U))$. We recall the following definition from \cite{CNQ2017}:

\begin{definition}
A Lie groupoid $\maG \tto M$ is called a {\em Fredholm Lie groupoid} if
\begin{enumerate}
\item there exists an open, dense, $\maG$-invariant subset $U\subset M$ such that $\maG_U \simeq U \times U$;
\item for any $a\in C^*_r(\maG)$, we have that $1+\pi_0(a)$ is Fredholm if,
and only if, all $1+\pi_x(a)$, $x\in F:=M\backslash U$ are invertible.
\end{enumerate}
\end{definition}

As an open dense $\G$-orbit, the set $U$ is uniquely determined by $\G$. Moreover, a simple observation is that $F:=M \backslash U$ is closed and $\maG$-invariant.
We shall keep this notation throughout the paper. Note also that two regular representations $\pi_x$ and
$\pi_y$ are unitarily equivalent if, and only if, there is $g \in \maG$ such that
$d(g) = x$ and $r(g) = y$, that is, if $x, y$ are in the same orbit of
$\maG$ acting on $M$. In particular, one only needs to verify (2) for a representative of each orbit of $\maG_{F}$.

In \cite{CNQ, CNQ17}, we gave easier-to-check conditions for a groupoid $\maG$ to be Freholm, depending on properties of representations of $C_r\sp{\ast}(\maG)$. We review briefly the main notions, see \cite{nistorPrudhon, Roch} for details.

Let $A$ be a $C^*$-algebra. Recall that a two-sided ideal
$I \subset A$ is said to be {\em primitive} if it is the kernel of an irreducible
representation of $A$. We denote by $\Prim(A)$ the set of primitive ideals of $A$
and we equip it with the hull-kernel topology {(see \cite{DavidsonBook, williamsBook} for more details)}.
Let $\phi$ be a representation of $A$. The {\em support} $\supp(\phi)\subset \Prim(A)$
is defined to be the set of primitive
ideals of $A$ that contain $\ker(\phi)$.

The following definition appeared in \cite{nistorPrudhon} :
\begin{definition}
A set of $\maF$ of representations of a
$C^*$-algebra $A$ is said to be {\em exhaustive} if $\Prim(A)= \bigcup_{\phi\in \maF} \supp(\phi)$,
that is, if any irreducible representation is weakly contained in some $\phi\in\maF$.
\end{definition}

If $A$ is unital, then a set $\maF$ of representations of $A$ is called {\em
  strictly spectral} if  it characterizes invertibility in $A$, in that $a \in A$ is invertible if, and only if, $\phi(a)$ is invertible
for all $\phi \in \maF$. If $A$ does not have a unit, we replace
$A$ with $A\sp{+} := A \oplus \CC$ and $\maF$ with $\maF\sp{+} := \maF
\cup \{\chi_0 : A\sp{+} \to \CC\}$,
where $\maF$ is regarded as a family of representations of $A^+$ and $\chi_0$ is the representation defined by $\chi_0|_A = 0$ and $\chi_0(1) =1$.
Note that strictly spectral families of representations consist
of non-degenerate representations, and any non-degenerate representation of
a (closed, two-sided) ideal in a $C\sp{\ast}$-algebra always has a unique
extension to the whole algebra \cite{nistorPrudhon}.

It was proved in \cite{nistorPrudhon, Roch} that, if $\maF$ is exhaustive, then $\maF$ is strictly
spectral, and the converse also holds  if $A$ is separable.
 That is, in the separable case, exhaustive families suffice to characterize invertibility in $A$. In this paper, we shall work mainly with the notion of exhaustive families and assume this equivalence throughout.

The next result was given  in \cite{CNQ, CNQ17} and  gives a characterization of Fredholm
groupoids. For a groupoid $\maG$, we usually denote by
 $\maR(\maG)$ the set of its regular representations.

\begin{theorem}  \label{thm.Fredholm.Cond}
Let $\maG \tto M$ be a Hausdorff Lie groupoid and $U$ an open, dense, $\maG$-invariant subset  such that $\maG_U \simeq U \times U$, $F=M \backslash U$. If $\maG$ is a Fredholm groupoid, we have:
\begin{enumerate}[(i)]

\item The canonical projection induces an isomorphism
  $C_r\sp{\ast}(\maG)/C_r\sp{\ast}(\maG_{U}) \simeq
  C_r\sp{\ast}(\maG_F)$, that is, we have the exact sequence
  \begin{equation*}
  0  \longrightarrow C\sp{\ast}_{r}(\maG_U)\cong \maK \longrightarrow C\sp{\ast}_{r}(\maG)
  \mathop{\xrightarrow{\hspace*{1cm}}}\limits^{(\rho_F)_{r}}
  C\sp{\ast}_{r}(\maG_F)  \longrightarrow 0 \,.
\end{equation*}
\item $\maR(\maG_{F})=\{\pi_x,\, x \in F\}$ is  an exhaustive set of
  representations of $C_r\sp{\ast}(\maG_F)$.
\end{enumerate}

Conversely, if $\maG \tto M$ satisfies (i) and (ii), then, for any
unital $C^{\ast}$-algebra $\mathbfPsi$ containing $C^{\ast}_r(\maG)$
as an essential ideal,
and for any $a \in \mathbfPsi $, we have that
$a$ is Fredholm on $L^2(U)$ if, and only if, $\pi_x(a)$ is invertible for
each $x \notin U$ {\bf and} the image of $a$ in
$\mathbfPsi/C^{\ast}_r(\maG)$ is invertible.
\end{theorem}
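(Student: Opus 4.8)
The plan is to prove the two implications separately, using throughout that $C^{*}_{r}(\maG_U)\simeq \maK := \maK(L^2(U))$ (Example \ref{ex:pair_groupoid}) and that the vector representation $\pi_0$ is \emph{faithful} on $C^{*}_{r}(\maG)$ (Remark \ref{rem:Hausdorff_pi_0_injective}), so that $\pi_0$ carries $C^{*}_{r}(\maG_U)$ \emph{onto} the whole ideal of compacts $\maK$. The elementary observation I would isolate first, and use repeatedly, is that $\pi_0(a)\in\maK$ forces $a\in C^{*}_{r}(\maG_U)$: indeed $\pi_0(a)=\pi_0(c)$ for some $c\in C^{*}_{r}(\maG_U)$ since $\pi_0$ is onto $\maK$ there, whence $a=c$ by faithfulness. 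In particular $\pi_0$ descends to an injective $*$-homomorphism $C^{*}_{r}(\maG)/C^{*}_{r}(\maG_U)\hookrightarrow \maL(L^2(U))/\maK$ into the Calkin algebra, under which the class of $a$ is invertible exactly when $\pi_0(a)$ is Fredholm.

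For the forward implication I would note that, since $F$ is $\maG$-invariant, $\maG_x\subseteq\maG_F$ for $x\in F$, so $\pi_x$ factors through the reduced restriction $(\rho_F)_r$; hence $\ker(\rho_F)_r=\bigcap_{x\in F}\ker\pi_x$ and proving (i) reduces to the inclusion $\bigcap_{x\in F}\ker\pi_x\subseteq C^{*}_{r}(\maG_U)$. This is the heart of the matter. Suppose $\pi_x(a)=0$ for all $x\in F$; then $\pi_x(a^{*}a)=\pi_x(a)^{*}\pi_x(a)=0$, so applying the Fredholm hypothesis to $z\,a^{*}a$ for every $z\in\CC$ shows that $1+z\,\pi_0(a^{*}a)$ is Fredholm for all $z$. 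Thus the positive operator $S:=\pi_0(a^{*}a)$ has essential spectrum contained in $\{0\}$, hence is compact, so $\pi_0(a)$ is compact and $a\in C^{*}_{r}(\maG_U)$ by the observation above. This gives the isomorphism $C^{*}_{r}(\maG)/C^{*}_{r}(\maG_U)\simeq C^{*}_{r}(\maG_F)$ of (i). For (ii) I would prove the equivalent \emph{strictly spectral} property and then upgrade it to exhaustiveness via the separable equivalence recalled before the theorem: if $b=1+c\in C^{*}_{r}(\maG_F)^{+}$ with $c=(\rho_F)_r(a)$ has $1+\pi_x(a)$ invertible for all $x\in F$, the Fredholm hypothesis makes $1+\pi_0(a)$ Fredholm, so the image of $b$ in the Calkin algebra is invertible, and spectral permanence for the $C^{*}$-subalgebra $C^{*}_{r}(\maG_F)^{+}$ forces $b$ to be invertible.

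For the converse, let $\mathbfPsi\supseteq C^{*}_{r}(\maG)$ be a unital essential ideal. I would first extend $\pi_0$ to $\mathbfPsi\subseteq M(C^{*}_{r}(\maG))$ and check it stays faithful: if $\pi_0(a)=0$ then $\pi_0(ab)=0$ for all $b\in C^{*}_{r}(\maG)$, so $ab=0$ by faithfulness on $C^{*}_{r}(\maG)$, and essentiality gives $a=0$. The observation above then yields $\{a\in\mathbfPsi:\pi_0(a)\in\maK\}=C^{*}_{r}(\maG_U)$, so $\mathbfPsi/C^{*}_{r}(\maG_U)$ embeds in the Calkin algebra and, by spectral permanence, \emph{$a$ is Fredholm on $L^2(U)$ if and only if its class in $\mathbfPsi/C^{*}_{r}(\maG_U)$ is invertible}. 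Using (i), this quotient fits into the extension
\begin{equation*}
0\longrightarrow C^{*}_{r}(\maG_F)\longrightarrow \mathbfPsi/C^{*}_{r}(\maG_U)\longrightarrow \mathbfPsi/C^{*}_{r}(\maG)\longrightarrow 0 .
\end{equation*}
I would then invoke the standard fact that an exhaustive family for an ideal, together with a faithful representation of the quotient, forms an exhaustive family for the total algebra (cf. \cite{nistorPrudhon}): combining the exhaustive family $\{\pi_x:x\in F\}$ of (ii) for $C^{*}_{r}(\maG_F)$ with a faithful representation of $\mathbfPsi/C^{*}_{r}(\maG)$, invertibility of the class of $a$ in $\mathbfPsi/C^{*}_{r}(\maG_U)$ is equivalent to $\pi_x(a)$ being invertible for all $x\in F$ \emph{and} the image of $a$ in $\mathbfPsi/C^{*}_{r}(\maG)$ being invertible. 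Together with the displayed equivalence, this is exactly the asserted criterion.

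The step I expect to be most delicate is the reduced-level exactness (i): for general non-amenable groupoids the kernel of $(\rho_F)_r$ may be strictly larger than $C^{*}_{r}(\maG_U)$, so the whole content is that the Fredholm hypothesis excludes this, and the positivity/essential-spectrum trick applied to $a^{*}a$ is precisely what converts that hypothesis into compactness of $\pi_0(a)$. A secondary technical point in the converse is that the exhaustive-family-for-extensions lemma is applied to the possibly non-separable algebra $\mathbfPsi/C^{*}_{r}(\maG_U)$, so it must be used in its exhaustive (rather than merely strictly spectral) formulation, for which the implication exhaustive $\Rightarrow$ strictly spectral needs no separability.
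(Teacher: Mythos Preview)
The paper does not actually prove this theorem: it is quoted verbatim from the authors' earlier work \cite{CNQ, CNQ17} (see the sentence immediately preceding the statement), so there is no proof in the present paper to compare your attempt against.

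That said, your reconstruction is essentially correct and follows the line of argument one finds in the cited references. The key steps you isolate are the right ones: the faithfulness of $\pi_0$ (via Khoshkam--Skandalis, Remark~\ref{rem:Hausdorff_pi_0_injective}) giving the embedding into the Calkin algebra; the essential-spectrum trick applied to $a^{*}a$ to force compactness of $\pi_0(a)$ when all boundary regular representations vanish, which is exactly what yields reduced exactness (i) from the Fredholm hypothesis; the strictly-spectral $\Leftrightarrow$ exhaustive equivalence for separable $C^{*}$-algebras to obtain (ii); and, for the converse, the three-term extension $0\to C^{*}_{r}(\maG_F)\to \mathbfPsi/C^{*}_{r}(\maG_U)\to \mathbfPsi/C^{*}_{r}(\maG)\to 0$ combined with the exhaustiveness-under-extensions lemma from \cite{nistorPrudhon}. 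Your closing caveat about applying that lemma to a possibly non-separable $\mathbfPsi$ is also well taken: one must indeed use the exhaustive formulation, since a single faithful representation of the quotient has full support in $\Prim$ regardless of separability.

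One small point worth tightening: when you write $\ker(\rho_F)_r=\bigcap_{x\in F}\ker\pi_x$, this is immediate from the definition of the reduced norm on $C^{*}_{r}(\maG_F)$, but you should also remark that $(\rho_F)_r$ is \emph{surjective} (extension by zero on $C_c$), since you use this implicitly when lifting $c\in C^{*}_{r}(\maG_F)$ to $a\in C^{*}_{r}(\maG)$ in the proof of (ii).
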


In \cite{CNQ, CNQ17}, we dubbed condition (ii) as {\em Exel's property} (for  $\maG_F$). If $\maR(\maG_{F})=\{\pi_x,\, x \in F\}$ is an exhaustive set of
representations of $C\sp{\ast}(\maG_F)$, then $\maG_{F}$ is said to have the {\em strong Exel property}. In this case, it is metrically amenable.
We will use the sufficient conditions in Theorem \ref{thm.Fredholm.Cond} in the following form:

\begin{proposition}\label{cor.Fredholm.Cond}
Let $\maG \tto M$ be a Hausdorff Lie groupoid and $U$ an open, dense, $\maG$-invariant subset  such that $\maG_U \simeq U \times U$. Let $F=M \backslash U$.
Assume $\maR(\maG_{F})=\{\pi_x,\, x \in F\}$ is an exhaustive set of
representations of $C\sp{\ast}(\maG_F)$. Then $\maG$ is Fredholm and metrically amenable.
\end{proposition}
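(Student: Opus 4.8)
The plan is to show that the hypothesis is exactly the \emph{strong Exel property} for $\maG_F$ and then to feed conditions (i) and (ii) of Theorem \ref{thm.Fredholm.Cond} into its converse direction. First I would observe that the assumption that $\maR(\maG_F) = \{\pi_x,\ x\in F\}$ is an exhaustive set of representations of $C^*(\maG_F)$ is precisely the strong Exel property recalled just before the statement; by the remark there it forces $\maG_F$ to be metrically amenable, so the canonical surjection $C^*(\maG_F) \to C^*_r(\maG_F)$ is an isomorphism. Consequently the exhaustiveness hypothesis, stated for the full algebra, transfers verbatim to $C^*_r(\maG_F)$, which is exactly condition (ii) of Theorem \ref{thm.Fredholm.Cond}.

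Next I would establish condition (i). The full sequence \eqref{renault.exact_item1} holds unconditionally, and since $\maG_F$ is metrically amenable we may pass to the reduced sequence \eqref{renault.exact_item3}, namely $0 \to C^*_r(\maG_U) \to C^*_r(\maG) \to C^*_r(\maG_F) \to 0$. Because $\maG_U \simeq U \times U$ is a pair groupoid, Example \ref{ex:pair_groupoid} identifies $C^*_r(\maG_U) = C^*(\maG_U) \cong \maK$. Together these give precisely condition (i).

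With (i) and (ii) in hand, I would invoke the converse half of Theorem \ref{thm.Fredholm.Cond}. Here the one point requiring care is to package its general Fredholm criterion into the definitional form of a Fredholm groupoid. I would take $\mathbfPsi$ to be the unitalization of $C^*_r(\maG)$; using that $\maG$ is Hausdorff, Remark \ref{rem:Hausdorff_pi_0_injective} realizes $C^*_r(\maG) \hookrightarrow \B(L^2(U))$ via the injective vector representation $\pi_0$, so that $\mathbfPsi = \CC\,1 + \pi_0(C^*_r(\maG))$ is unital and contains $C^*_r(\maG) \supset C^*_r(\maG_U) \cong \maK$ as an essential ideal (an element of $\mathbfPsi$ annihilating the compacts vanishes). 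Applying the converse to an element $1 + a$ with $a \in C^*_r(\maG)$ --- whose image in $\mathbfPsi/C^*_r(\maG) \cong \CC$ is the invertible unit --- yields that $1 + \pi_0(a)$ is Fredholm on $L^2(U)$ if and only if $1 + \pi_x(a)$ is invertible for every $x \in F$. This is exactly clause (2) of the definition of a Fredholm groupoid, so $\maG$ is Fredholm.

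Finally, for metric amenability of $\maG$ itself, I would use that pair groupoids are metrically amenable (Example \ref{ex:pair_groupoid}), so $\maG_U$ is metrically amenable; combined with the metric amenability of $\maG_F$ obtained in the first step, the Five Lemma comparison of \eqref{renault.exact_item1} with \eqref{renault.exact_item3} (as recorded after those sequences) forces $C^*(\maG) \to C^*_r(\maG)$ to be injective, i.e.\ $\maG$ is metrically amenable. The only genuine subtlety in the whole argument is the bookkeeping of the third paragraph --- choosing $\mathbfPsi$ and verifying essentiality so that the abstract converse of Theorem \ref{thm.Fredholm.Cond} reduces to the definition; the remaining steps are a direct assembly of facts already recorded in the excerpt.
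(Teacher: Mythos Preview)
Your proof is correct and follows essentially the same route as the paper: deduce metric amenability of $\maG_F$ from the strong Exel property, use the exact sequences \eqref{renault.exact_item1}--\eqref{renault.exact_item3} together with amenability of the pair groupoid $\maG_U$ to get condition (i) and metric amenability of $\maG$, and then apply the converse of Theorem \ref{thm.Fredholm.Cond} with $\mathbfPsi$ the unitalization of $C^*_r(\maG)$. If anything, you are more careful than the paper in two places: you explicitly transfer exhaustiveness from $C^*(\maG_F)$ to $C^*_r(\maG_F)$ before invoking condition (ii), and you spell out why $C^*_r(\maG)$ is essential in its unitalization.
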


This characterization of Fredholm groupoids, together with the properties of exhaustive families, allows us to show that large classes of groupoids are Fredholm. See for instance Corollary \ref{prop.fred} below and all the examples in Subsection \ref{sub:boundary_action_groupoids}, and more generally \cite{CNQ2017}.

\begin{proof}
Condition (ii)  in Theorem \ref{thm.Fredholm.Cond}  holds by assumption.
If $\maR(\maG_{F})$ is a strictly spectral set of
representations of $C\sp{\ast}(\maG_F)$ then, by definition, the reduced and full norms on $\cC_c(\maG_F)$ coincide, hence $\maG_{F}$ is metrically amenable. It follows from the exact sequences  \eqref{renault.exact_item1} and \eqref{renault.exact_item3}, since $\maG_{U} \simeq U \times U$ is metrically amenable, that $\maG$ is metrically amenable and that condition (i) in Theorem \ref{thm.Fredholm.Cond} also holds.
Taking the unitalization $\mathbfPsi:=\left(C^{\ast}(\maG)\right)^{+}$, we have then that $\maG$ is Fredholm.
\end{proof}

Representations are extended to matrix algebras in the
obvious way, which allows us to treat operators on vector bundles.

\begin{remark}
The notion of exhaustive family can be linked to that of $EH$-amenability and to the Effros-Hahn conjecture \cite{CNQ17, nistorPrudhon}.
Let $\maG\tto F$ be an $EH$-amenable locally compact groupoid.  Then
the family of regular representations $\{\pi_y, y \in F\}$ of $C\sp{\ast}(\maG)$
is exhaustive, hence strictly spectral.
Hence if $U$ is a dense invariant subset such that $\maG_{U}$ is the pair groupoid and $\maG_{F}$ is $EH$-amenable, then $\maG$ is Fredholm. Combining with the proof of the generalized EH conjecture
\cite{ionescuWilliamsEHC, renault87, renault91} for amenable,
Hausdorff, second countable groupoids, we get a set of sufficient conditions for $\maG$ to be Fredholm.
\end{remark}

\begin{example}\label{expl.transfgroupoidFredholm}
Let $\overline{\maH}=[0,\infty] \rtimes (0,\infty)$ be the transformation groupoid
with the action of $(0,\infty)$ on $[0,\infty ]$ by dilation,
(that is, $\overline{\maH}$ is the extension of the groupoid in Example \ref{ex:action_groupoid} to the one point compactification of $[0,\infty)$). Then $\overline{\maH}$ is Fredholm. Indeed, it is clear that $(0,\infty)\subset [0,\infty]$ is an invariant open dense subset, and $\overline{\maH}|_{(0,\infty)} \simeq (0,\infty)^2$, the pair groupoid of $(0,\infty)$.
Then $F=\{0, \infty\}$, $\overline{\maH}_{F}\cong (0,\infty)\sqcup (0,\infty)$, the disjoint union of two amenable Lie groups, and $C\sp{\ast}(\overline{\maH}_{F})\cong \maC_{0}(\RR^{+}) \oplus \maC_{0}(\RR^{+})$. Hence $\overline{\maH}_{F}$ has Exel's property (the regular representations at $0$ and $\infty$ are induced from the regular representation of the group, which is just convolution).
So $\overline{\maH}$ is Fredholm.

Note that if we have a convolution operator $K$ on the abellian group $(0,\infty)$, for instance the double layer potential operator, we can identify $K$ with a family of convolution operators $K_x$, $x\in (0,\infty)$ (we use the fact that the action groupoid $(0,\infty) \rtimes (0,\infty)$ is isomorphic to the pair groupoid of $(0,\infty)$.)
Since each $K_x$ is a convolution operator, we can
always extend by continuity the family $K_x$, $x\in (0,\infty)$ to the family $K_x$, $x\in [0,\infty]$ (two endpoints included).
In this way, we identify $K$ with an operator on the groupoid $[0,\infty] \rtimes (0,\infty)$ (note however, that the reduced support of $K$ may not be compact, so  it might not be a pseudodifferential operator on the groupoid $\overline{\maH}$, according to our previous definition).
\end{example}

In the next example, we study an important class of Lie
groupoids for which the set  of regular
representations is an exhaustive set of representations of
$\Cs{\maG}$. The point is that
 locally, our groupoid is the product of a group $G$ and a space, so its
$C^*$-algebra  is of the form $C^*(G) \otimes \maK$, where
$\maK$ are the compact operators.
See \cite[Proposition 3.10]{CNQ2017} for a complete proof.

\begin{example}\label{ex.Exel}
Let $\maH \tto B$ be a locally trivial bundle of groups, so $d = r$,
with fiber a locally compact group $G$. Then $\maH$ has Exel's property, that is, the set of regular representations $\maR(\maH)$ is exhaustive for $C^{\ast}_r(\maH)$, since any irreducible representation of $C_{r}^{\ast}(\maH)$ factors through evaluation at
$\maH_{x}\cong G$, and
the regular representations of $\maH$  are obtained from the regular representation of $G$.
It is exhaustive for the full algebra $C^{\ast}(\maH)$ if, and only if,
the group $G$ is amenable.

More generally, let $f : M \to B$ be a continuous surjective map. Then $\maG=f\pullback(\maH)$ is a locally compact groupoid with a Haar system that also has Exel's property, and $\maR(\maG)$ is exhaustive for $C^{\ast}(\maG)$ if, and only if,
the group $G$ is amenable (note that $G$ is isomorphic to the isotropy group $\maH_{x}^{x}$, for $x\in M$). This stems from the fact that $\maH$ and $f\pullback(\maH)$ are \emph{Morita equivalent} groupoids, hence have homeomorphic primitive spectra \cite{CNQ2017}.

\begin{remark}
In fact,  $f\pullback (\maH)$
satisfies the generalized EH conjecture, and hence it has the
weak-inclusion property. It will be EH-amenable if, and only if, the group $G$ is
amenable (see \cite{CNQ17}).
\end{remark}

\end{example}

Putting together the previous example and Proposition \ref{cor.Fredholm.Cond}, we conclude the following:
\begin{corollary}\label{prop.fred}
Let $\maG\tto M$ is a Hausdorff Lie groupoid with $U\subset M$ an open, dense, invariant subset. Set $F=M\setminus U$ and assume that we have a decomposition
$\maG_U \simeq U \times U$ and $\maG_F \simeq f\pullback(\maH)$; in particular,
\begin{equation*}
\maG = (U\times U) \sqcup f\pullback(\maH),
\end{equation*}
where $f: F\to B$ is a {tame submersion }and $\maH\tto B$ is a bundle of amenable Lie groups. Then $\maG$ is Fredholm.
\end{corollary}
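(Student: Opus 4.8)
The plan is to verify the hypotheses of Proposition \ref{cor.Fredholm.Cond}: it suffices to show that $\maR(\maG_F) = \{\pi_x, x \in F\}$ is an exhaustive set of representations of $C^*(\maG_F)$. The decomposition hypothesis already hands us the crucial input, namely that $\maG_F \simeq f^\pbg(\maH)$ is the fibered pull-back of a bundle $\maH \tto B$ of amenable Lie groups along the tame submersion $f : F \to B$. So the whole statement reduces to quoting the exhaustivity conclusion of Example \ref{ex.Exel} for this pulled-back groupoid and then invoking the proposition.

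First I would record that, since $f$ is a tame submersion and $\maH \tto B$ is a bundle of Lie groups with amenable fiber $G$, the groupoid $\maG_F = f^\pbg(\maH)$ is a locally compact (indeed Lie) groupoid with a Haar system, as noted in Example \ref{ex:pullback_groupoid}. Next I would apply the \enquote{more generally} part of Example \ref{ex.Exel}: because $\maH$ and $f^\pbg(\maH)$ are Morita equivalent and hence have homeomorphic primitive spectra, and because $G$ is amenable, the set of regular representations $\maR(\maG_F)$ is exhaustive for the \emph{full} algebra $C^*(\maG_F)$. This is exactly condition (ii) in the form required by Proposition \ref{cor.Fredholm.Cond}; the amenability of $G$ is what upgrades exhaustivity from $C^*_r$ to $C^*$, which is the version the proposition needs.

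With exhaustivity in hand, together with the standing hypotheses that $U$ is open, dense and $\maG$-invariant and $\maG_U \simeq U \times U$, Proposition \ref{cor.Fredholm.Cond} applies verbatim and yields that $\maG$ is Fredholm (and metrically amenable). No genuinely new computation is needed: the corollary is essentially a packaging of Example \ref{ex.Exel} into the Fredholm criterion. The only point demanding care—and the step I would treat as the main obstacle—is checking that the abstract bundle $\maH$ produced by the decomposition really has \emph{amenable} fibers and that $f$ genuinely satisfies the tame submersion condition, since these are precisely the two hypotheses that make the Morita-equivalence argument of Example \ref{ex.Exel} deliver exhaustivity for the full (not merely reduced) $C^*$-algebra. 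Once those are confirmed the conclusion is immediate.
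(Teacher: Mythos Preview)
Your proposal is correct and follows essentially the same approach as the paper: the paper simply states that the corollary is obtained by putting together Example \ref{ex.Exel} with Proposition \ref{cor.Fredholm.Cond}, which is exactly what you do. One minor remark: in your final paragraph you flag checking amenability of the fibers and the tame-submersion condition as the ``main obstacle,'' but these are explicit hypotheses of the corollary, so there is nothing to verify.
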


Corollary \ref{prop.fred} is enough to obtain the Fredholm property for many groupoids used in applications.
Several examples can be found in \cite[Section 5]{CNQ} (see also \cite{CNQ17}). They include the $b$-groupoid modelling manifolds with poly-cylindrical ends, groupoids modelling analysis on asymptotically Euclidean spaces, asymptotically hyperbolic
spaces, and the edge groupoids. Some of these examples will be discussed in Subsection \ref{sub:boundary_action_groupoids}.

\smallskip

We consider Fredholm groupoids because of their applications
to Fredholm conditions. Let  $\Psi\sp{m}(\maG)$ be the space of order $m$,
classical pseudodifferential operators $P = (P_x)_{x \in M}$ on $\maG$, as in Subsection \ref{ss.ops.grpds}.
Recall that $P_x \in
\Psi\sp{m}(\maG_x)$, for any $x \in M$ and that $P_x=\pi_x(P)$, with $\pi_x$ the regular representation of $\G$ at $x \in M$. The operator $P$ acts on $U$ via the (injective) vector representation $\pi_0 : \Psi^m(\G) \to \maL(H^s(U),H^{s-m}(U))$ and that $L^{m}_{s}(\maG)$
is the norm closure of $\Psi^m(\maG)$ in the topology of
continuous operators $H^s(U)\to H^{s-m}(U)$.

Recall that a differential operator $P:C^\infty(U) \to C^\infty(U)$ is called \emph{elliptic} if its principal symbol $\sigma(P) \in \Gamma(T^*U)$ is invertible outside the zero-section \cite{Hor1985}. The following Fredholm condition is one of the main results of \cite{CNQ2017}.

\begin{theorem}[{\citeauthor{CNQ} \cite[Theorem 4.17]{CNQ}}] \label{thm.nonclassical2}
Let $\maG \tto M$ be a Fredholm Lie groupoid and let
$U \subset M$ be the dense, $\maG$-invariant subset such that
$\maG_{U} \simeq U \times U$.
 Let $s\in \RR$ and $P \in L^m_s(\maG) \supset \Psi\sp{m}(\maG)$.
We have
\begin{multline*} 
	P : H^s(U) \to H^{s-m}(U)\
        \mbox{ is Fredholm} \ \ \Leftrightarrow \ \ P \mbox{ is
          elliptic and }\\
	\ P_{x} : H^s(\maG_x) \to H^{s-m}(\maG_x) \ \mbox{ is
          invertible for all } x \in F := M \smallsetminus U \,.
\end{multline*}
\end{theorem}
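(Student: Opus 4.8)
The plan is to reduce the statement to Theorem~\ref{thm.Fredholm.Cond}, which already characterizes Fredholmness for elements of a unital $C^*$-algebra $\mathbfPsi$ containing $C^*_r(\maG)$ as an essential ideal. The main structural observation is that $L^m_s(\maG)$ maps, via the symbol calculus, onto $C^*$-algebra data that can be fit into such a $\mathbfPsi$. First I would treat the order-zero case $m=0$, $s=0$ directly: here $L^0_0(\maG)$ is, by its definition in Subsection~\ref{ss.ops.grpds}, a unital $C^*$-algebra containing $C^*_r(\maG)$ as a closed two-sided ideal, and by Remark~\ref{rem:Hausdorff_pi_0_injective} the vector representation $\pi_0$ embeds it into $\B(L^2(U))$. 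One checks that $C^*_r(\maG)$ is an \emph{essential} ideal of $L^0_0(\maG)$, so Theorem~\ref{thm.Fredholm.Cond} applies: $P$ is Fredholm on $L^2(U)$ if and only if $\pi_x(P)$ is invertible for all $x \in F$ \textbf{and} the image of $P$ in the quotient $L^0_0(\maG)/C^*_r(\maG)$ is invertible.

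Next I would identify the two conditions from Theorem~\ref{thm.Fredholm.Cond} with the two conditions in the statement. The invertibility of $\pi_x(P) = P_x$ for $x \in F$ is literally condition~(2), using that $H^s(\maG_x)$ with $s=0$ is $L^2(\maG_x)$. For condition~(1), the point is that the quotient $L^0_0(\maG)/C^*_r(\maG)$ is governed by the principal symbol: the symbol map descends to a morphism on this quotient whose injectivity expresses that invertibility in the quotient is equivalent to ellipticity of $P$. This uses the standard fact that $\Psi^0(\maG)$ modulo $\Psi^{-1}(\maG)$ (and hence modulo $C^*_r(\maG)$ after completion) is isomorphic to the algebra of principal symbols on $S^*U$, together with the observation that $\pi_x(P)$ invertible for $x\in F$ already forces the symbol condition to be checked only over $U$, matching $\sigma(P)\in\Gamma(T^*U)$ invertible off the zero section.

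To pass from $m=0,s=0$ to general $m$ and $s$, I would conjugate by powers of $(1+\Delta)$. Concretely, since $\Delta \in \Psi^2(\maG)$ defines the Sobolev scale $H^s(\maG_x)$, the operator $(1+\Delta)^{(s-m)/2}\,P\,(1+\Delta)^{-s/2}$ lies in $L^0_0(\maG)$ and is Fredholm as a map $L^2(U)\to L^2(U)$ if and only if $P\colon H^s(U)\to H^{s-m}(U)$ is Fredholm, because the elliptic operators $(1+\Delta)^{t/2}$ induce isomorphisms $H^r(U)\to H^{r-t}(U)$ (and likewise on each fiber $\maG_x$). Under this conjugation both the ellipticity condition and the fiberwise invertibility condition are preserved, since the principal symbol of $(1+\Delta)^{t/2}$ is invertible off the zero section and $\pi_x$ intertwines conjugation on $\maG$ with conjugation on $\maG_x$. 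This reduces the general case to the already-treated $m=0,s=0$ case.

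The main obstacle I expect is the precise identification of the quotient $L^0_0(\maG)/C^*_r(\maG)$ and the verification that invertibility there is exactly ellipticity, rather than something weaker that also involves the fibers over $F$. The subtlety is that the principal symbol over a \emph{boundary} point of $M$ and the limit operator $P_x$ both carry information, and one must be careful that the symbol algebra sees only $T^*U$ while the boundary data is entirely captured by the $\pi_x$, $x\in F$; disentangling these so that condition~(1) is purely symbolic over $U$ and condition~(2) is purely spectral over $F$ is where the real work lies. This is essentially the content of \cite[Theorem 4.17]{CNQ}, so in a self-contained write-up I would either invoke that result directly or reproduce the short argument that the essential-ideal structure plus Theorem~\ref{thm.Fredholm.Cond} yields exactly the stated dichotomy.
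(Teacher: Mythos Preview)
Your proposal is correct and follows essentially the same route as the paper: reduce to order zero by conjugating with $(1+\Delta)^{t/2}$, then apply Theorem~\ref{thm.Fredholm.Cond} with $\mathbfPsi = L^0_0(\maG)$, using that $C^*_r(\maG)$ is an essential ideal there. The paper's proof is in fact terser than yours---it writes down $a = (1+\Delta)^{(s-m)/2}P(1+\Delta)^{-s/2}$, asserts $a \in L^0_0(\maG)$ by \cite{LMN,LN}, and defers the identification of invertibility in $L^0_0(\maG)/C^*_r(\maG)$ with ellipticity to \cite{CNQ2017}; your discussion of why the symbol condition over $U$ suffices (the boundary symbol data being absorbed into the $\pi_x$ for $x\in F$) is exactly the content hidden behind that citation.
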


\begin{proof}
  This theorem is proved by considering $a := (1 + \Delta)\sp{(s-m)/2} P
(1 + \Delta)\sp{-s/2}$, which belongs to the $C^*$-algebra $\overline{\Psi}(\maG) \, =:\, L^{0}_{0}(\maG)$
by the results in \cite{LMN, LN}. Since $\overline{\Psi}(\G)$ contains $C^*_r(\G)$ as an essential ideal, the conclusion follows from Theorem \ref{thm.Fredholm.Cond}. See \cite{CNQ2017} for more details.
\end{proof}

Theorem \ref{thm.nonclassical2} extends straightforwardly to operators acting between sections of vector bundles. The operators $P_x$, for $x \in M \setminus U$, are called \emph{limit operators} of $P$. Note that $P_x$ is invariant under the action of the isotropy group $\G_x^x$ on the fiber $\G_x$. Similar characterizations of Fredholm operators were obtained in different contexts in \cite{Mel1993,Sch1991,GI2006,DLR2015,Dau1988}, to cite a few examples.

\vspace{0.3cm}
\section{Boundary Action Groupoids}\label{sec:gluing_groupoids}

We describe in this section a procedure for gluing locally compact groupoids. This extends a construction of Gualtieri and Li that was used to classify the Lie groupoids integrating certain Lie algebroids \cite{GL2014} (see also \cite{Nis2015}).

\subsection{The gluing construction}
\label{sub:gluing_groupoids}

Let $X$ be a locally compact Hausdorff space, covered by a family of open sets $(U_i)_{i\in I}$. Recall that, if $\G \tto X$ is a locally compact groupoid and $U \subset X$ an open set, then the \emph{reduction} of $\G$ to $U$ is the open subgroupoid $\G|_U := \G_U^U = d^{-1}(U) \cap r^{-1}(U)$.

Now, for each $i \in I$, let $\G_i \rightrightarrows U_i$ be a locally compact groupoid with domain $d_i$ and range $r_i$. Assume that we are given a family of isomorphisms between all the reductions
\begin{equation*}
  \phi_{ji} : \G_i|_{U_i\cap U_j} \to \G_j|_{U_i\cap U_j},
\end{equation*}
such that $\phi_{ij}=\phi_{ji}^{-1}$ and $\phi_{ij}\phi_{jk} = \phi_{ik}$ on the common domains.
Our aim is to glue the groupoids $\G_i$ to build a groupoid $\G \st X$ such that, for all $i\in I$,
\begin{equation*}
  \G|_{U_i} \simeq \G_i.
\end{equation*}

As a topological space, the groupoid $\G$ is defined as the quotient
\begin{equation} \label{eq:quotient_space}
  \G = \bigsqcup_{i\in I} \G_i \big/ \sim,
\end{equation}
where $\sim$ is the equivalence relation generated by $g \sim \phi_{ji}(g)$, for all $i,j \in I$ and $g \in \G_i$. Since each $\G_i$ is a locally compact space, the space $\G$ is also locally compact for the quotient topology. If $g\in\G$ is the equivalence class of a $g_i \in \G_i$, we define
\begin{equation*}
  d(g) = d_i(g_i) \qquad \text{and} \qquad r(g) = r_i(g_i).
\end{equation*}
Because the groupoids $\G_i$ are isomorphic on common domains $U_i \cap U_j$, for $i,j \in I$, this definition is independent on the choice of the representative $g_i$. The unit $u: X \to \G$ and inverse maps are defined in the same way. Therefore, the subsets $\G|_{U_i} = r^{-1}(U_i) \cap d^{-1}(U_i)$ are well defined, for each $i \in I$.

\begin{lm} \label{lm:pi_i_homeo}
  For each $i\in I$, the quotient map $\pi_i : \G_i \to \G$ induces an homeomorphism (of topological spaces)
  \begin{equation*}
    \pi_i : \G_i \to \G|_{U_i}.
  \end{equation*}
\end{lm}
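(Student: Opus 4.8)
The plan is to verify that $\pi_i$ is a continuous open bijection onto the open subset $\G|_{U_i} \subset \G$. That $\pi_i$ lands in $\G|_{U_i}$ is immediate: for $g_i \in \G_i$ we have $d(\pi_i(g_i)) = d_i(g_i) \in U_i$ and $r(\pi_i(g_i)) = r_i(g_i) \in U_i$, so $\pi_i(g_i) \in d^{-1}(U_i) \cap r^{-1}(U_i) = \G|_{U_i}$. Continuity is equally clear, since $\pi_i$ factors as the inclusion $\G_i \hookrightarrow \bigsqcup_j \G_j$ followed by the quotient map defining $\G$, both of which are continuous. The substance of the statement is therefore bijectivity and openness, and both rest on a single structural observation: every equivalence class has \emph{at most one} representative in each $\G_j$, and all its representatives share the same $d$ and $r$.

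First I would record that the cocycle condition forces $\phi_{ii} = \mathrm{id}$ (taking $i=j=k$ in $\phi_{ij}\phi_{jk}=\phi_{ik}$ shows $\phi_{ii}$ is an idempotent isomorphism). For injectivity, suppose $g_i \sim g_i'$ with $g_i, g_i' \in \G_i$. Unwinding the generating relation gives a finite chain $g_i = h_0, h_1, \dots, h_n = g_i'$ with $h_k \in \G_{j_k}$, $j_0 = j_n = i$, and $h_{k+1} = \phi_{j_{k+1} j_k}(h_k)$. Since each $\phi$ preserves domain and range, all $h_k$ have the same $d$ and $r$, namely $x := d(g_i)$ and $y := r(g_i)$; hence $x, y \in U_{j_k}$ for every $k$ and each transition is legitimately defined on the relevant reduction. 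Composing along the chain and applying the cocycle identity repeatedly collapses the composite to $\phi_{j_n j_0} = \phi_{ii} = \mathrm{id}$, whence $g_i' = g_i$. The same argument shows, more generally, that if a class has representatives in $\G_j$ and $\G_k$, they are related by the single map $\phi_{kj}$ — a fact I will reuse for openness.

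Surjectivity onto $\G|_{U_i}$ is then straightforward: given $g \in \G|_{U_i}$, pick any representative $g_j \in \G_j$; since $d(g), r(g) \in U_i \cap U_j$ we have $g_j \in \G_j|_{U_i \cap U_j}$, so $\phi_{ij}(g_j) \in \G_i$ is a representative of $g$ and $g = \pi_i(\phi_{ij}(g_j))$.

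The main obstacle is openness, which I would handle via the quotient topology. For $V \subset \G_i$ open, $\pi_i(V)$ is open in $\G$ if and only if its preimage under the quotient map meets each $\G_j$ in an open set. By the uniqueness-of-representatives observation, this preimage in $\G_j$ is exactly $\{g_j \in \G_j|_{U_i \cap U_j} : \phi_{ij}(g_j) \in V\} = \phi_{ji}\big(V \cap \G_i|_{U_i \cap U_j}\big)$ (and simply $V$ when $j=i$, by injectivity). As $V \cap \G_i|_{U_i \cap U_j}$ is open and $\phi_{ji}$ is a homeomorphism of reductions, this set is open in $\G_j|_{U_i\cap U_j}$, hence open in $\G_j$. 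Thus $\pi_i(V)$ is open in $\G$, and since $\G|_{U_i}$ is open in $\G$ it is open there as well. Combining, $\pi_i$ is a continuous open bijection onto $\G|_{U_i}$, i.e.\ a homeomorphism, as claimed.
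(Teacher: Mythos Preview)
Your proof is correct and follows the same strategy as the paper's: show that $\pi_i$ is a continuous open bijection onto $\G|_{U_i}$. You supply considerably more detail than the paper—particularly in using the cocycle condition to collapse chains and verify injectivity, and in explicitly checking that saturations of open sets are open—whereas the paper simply asserts that the quotient maps are open and that injectivity ``follows from the definition of $\sim$.''
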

\begin{proof}
  The topology on $\G$ is the coarsest one such that each quotient map $\pi_i$ is open and continuous, for every $i \in I$. Moreover, for any $i \in I$, the definition of the equivalence relation $\sim$ in Equation \eqref{eq:quotient_space} implies that $\pi_i$ is injective. Therefore, the map $\pi_i$ is a homeomorphism onto its image, which is obviously contained in $\G|_{U_i}$.

  To prove that $\pi_i(\G_i) = \G|_{U_i}$, let $g \in \G|_{U_i}$ be represented by an element $g_j \in \G_j$, for $j \in I$. Then $g_j \in \G_j|_{U_i\cap U_j}$, which is isomorphic to $\G_i|_{U_i\cap U_j}$ through $\phi_{ij}$ : thus $g$ also has a representative in $\G_i$. This shows that $\pi_i(\G_i) =\G|_{U_i}$.
\end{proof}

In particular, Lemma \ref{lm:pi_i_homeo} implies that the structural maps $d,r,u$ and $\iota$ are continuous and that the domain and range maps $d,r : \G \to X$ are open. With Remark \ref{rem:defn_groupoid_better} in mind, the only missing element to have a groupoid structure on $\G$ is a well-defined product. Therefore, define the set of composable arrows by
\begin{equation*}
  \G^{(2)} = \{(g,h) \in \G, d(g) = r(g)\}.
\end{equation*}
A problem is that there are a priori no relation between the two groupoids $\G_i$ and $\G_j$, for $i \neq j$. Thus, if  $(g_i,g_j)\in\G^{(2)}$ with $g_i \in \G_i$ and $g_j \in \G_j$, then there is a priori no obvious way of defining the product $g_ig_j$ in $\G$. A way around this issue is to introduce a \enquote{gluing condition}, so that any composable pair $(g,h) \in \G^{(2)}$ is actually contained in a single groupoid $\G_k$, for a $k \in I$.

\begin{defn}
  \label{defn:weak_gluing_condition}
  We say that a family $(\G_i \st U_i)_{i\in I}$ of locally compact groupoids satisfy the \emph{weak gluing condition} if for every composable pair $(g,h) \in \G^{(2)}$, there is an $i \in I$ such that both $g$ and $h$ have a representative in $\G_i$.
\end{defn}
An equivalent statement of Definition \ref{defn:weak_gluing_condition} is to say that the family $(\G_i^{(2)})_{i \in I}$ is an open cover of the space of composable arrows $\G^{(2)}$.

\begin{lm} \label{lm:groupoid_struct_on_the_gluing}
  Assume that the family $(\G_i)_{i\in I}$ satisfy the weak gluing condition. Then there is a unique groupoid structure on
  \begin{equation*}
    \G = \bigsqcup_{i\in I} \G_i / \sim
  \end{equation*}
  such that the projection maps $\pi_i :\G_i \to \G|_{U_i}$ are isomorphisms of locally compact groupoids, for every $i \in I$.
\end{lm}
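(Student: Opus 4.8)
The plan is to construct the five structural maps on $\G$ directly, using Lemma \ref{lm:pi_i_homeo} to transport structure from each $\G_i$, and then verify the groupoid axioms locally. The domain, range, unit, and inverse maps are already defined on $\G$ in a representative-independent way (as noted before the statement), so the only genuinely new ingredient is the product $\mu : \G^{(2)} \to \G$. First I would define $\mu$ on the open cover $(\G_i^{(2)})_{i\in I}$ of $\G^{(2)}$: given a composable pair $(g,h) \in \G^{(2)}$, the weak gluing condition (Definition \ref{defn:weak_gluing_condition}) furnishes an index $i$ such that both $g$ and $h$ lift to elements $g_i, h_i \in \G_i$; since $d(g) = r(h)$ in $X$ and $\pi_i$ is a groupoid-compatible homeomorphism, the pair $(g_i, h_i)$ is composable in $\G_i$, and I set $gh \eqdef \pi_i(g_i h_i)$.

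The main obstacle is showing that this product is \emph{well-defined}, i.e. independent of the chosen index $i$. Suppose a composable pair $(g,h)$ has representatives in both $\G_i$ and $\G_j$. Both $g$ and $h$ then lie in $\G|_{U_i \cap U_j}$, so by Lemma \ref{lm:pi_i_homeo} their lifts $g_i, h_i$ and $g_j, h_j$ satisfy $g_j = \phi_{ji}(g_i)$ and $h_j = \phi_{ji}(h_i)$, where the isomorphisms $\phi_{ji}$ are restricted to the appropriate common reductions. Because each $\phi_{ji}$ is a groupoid isomorphism, it commutes with multiplication: $g_j h_j = \phi_{ji}(g_i) \phi_{ji}(h_i) = \phi_{ji}(g_i h_i)$. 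Applying $\pi_j$ and using $\pi_j \circ \phi_{ji} = \pi_i$ (which follows from the definition of the equivalence relation $\sim$), I get $\pi_j(g_j h_j) = \pi_i(g_i h_i)$, so the two candidate products agree. A small point to check here is that $g_i h_i$ actually lands back in the reduction where $\phi_{ji}$ is defined; this holds because the orbit of $g_i h_i$ stays within $U_i \cap U_j$ whenever $g,h$ and their composite all have range and domain in that intersection, which is exactly the situation forced by having representatives on both sides.

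Once $\mu$ is well-defined, continuity is automatic: on each open set $\G_i^{(2)}$ of the cover, the product coincides via the homeomorphisms $\pi_i$ with the continuous multiplication of $\G_i$, so $\mu$ is continuous on each piece of an open cover and hence globally continuous. The groupoid axioms (associativity, the unit laws $u(r(g)) g = g = g\, u(d(g))$, and the inverse laws $g^{-1} g = u(d(g))$) are all statements about finitely many composable arrows; for any such finite configuration the weak gluing condition lets me find a single index $i$ containing representatives of all the arrows involved, whereupon each axiom reduces to the corresponding already-known identity in $\G_i$ transported through the isomorphism $\pi_i$. Finally, uniqueness follows because any groupoid structure making every $\pi_i : \G_i \to \G|_{U_i}$ an isomorphism must satisfy $\pi_i(g_i h_i) = \pi_i(g_i)\,\pi_i(h_i)$, which pins down the product on the cover $(\G_i^{(2)})$ and therefore on all of $\G^{(2)}$. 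This establishes both existence and uniqueness of the groupoid structure, and since each $\pi_i$ is a homeomorphism onto the open subgroupoid $\G|_{U_i}$ that intertwines all structural maps, the $\pi_i$ are isomorphisms of locally compact groupoids, as claimed.
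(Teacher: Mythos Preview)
Your approach mirrors the paper's: define the product via the weak gluing condition, verify it is independent of the chosen representative, and conclude that each $\pi_i$ is a groupoid isomorphism. Your treatment of well-definedness, continuity, and uniqueness is in fact more explicit than the paper's short argument.

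There is, however, a genuine gap in your verification of the groupoid axioms. You assert that ``for any such finite configuration the weak gluing condition lets me find a single index $i$ containing representatives of all the arrows involved.'' This is false: Definition~\ref{defn:weak_gluing_condition} only guarantees a common index for composable \emph{pairs}, not for triples. Concretely, take $X=\{a,b,c,d\}$, let the $U_i$ be the four three-element subsets of $X$, and let each $\G_i = U_i\times U_i$ be the pair groupoid. Any composable pair involves at most three points of $X$ and hence lies in some $\G_i$, so the weak gluing condition holds; yet the composable triple $g=(a,b)$, $h=(b,c)$, $k=(c,d)$ involves all four points and fits in no single $\G_i$. Your reduction of associativity to a single $\G_i$ therefore does not go through. (The unit and inverse laws are unaffected, since those genuinely involve only one arrow together with a unit or its inverse, all of which live in any $\G_i$ containing that arrow.) To be fair, the paper's own proof does not verify associativity explicitly either --- it simply defines the product, says well-definedness is ``checked at once,'' and moves on --- so you are not doing worse than the source on this point; but the justification you offer for associativity is incorrect as stated, and a careful argument under only the weak gluing hypothesis requires chasing through several indices rather than a single one.
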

\begin{proof} 
  Let $(g,h) \in \G^{(2)}$ be a composable pair. The weak gluing condition implies that there is an $i \in I$ such that $g$ and $h$ have representatives $g_i$ and $h_i$ in $\G_i$. We thus define the product $gh$ as the class of $g_{i}h_{i}$ in $\G$, and we check at once that this does not depend of a choice of representative for $g$ and $h$. Lemma \ref{lm:pi_i_homeo} and the definition of the structural maps on $\G$ imply that each $\pi_i : \G_i \to \G|_{U_i}$ is an isomorphism of locally compact groupoids, for each $i \in I$.

To show the uniqueness of the groupoid structure on $\G$, let us assume conversely that each map $\pi_i : \G_i \to \G|_{U_i}$ is a groupoid isomorphism. Since the reductions $(\G|_{U_i})_{i\in I}$ cover $\G$, the domain, range, identity and inverse maps of $\G$ are prescribed by those of each $\G_i$. Moreover, the weak gluing condition implies that, for each composable pair $(g,h) \in \G^{(2)}$, both $g$ and $h$ lie in a same reduction $\G|_{U_i}$. Thus the product on $\G$ is also determined by those of each groupoid $\G_i$, for $i \in I$.
\end{proof}

\begin{defn} \label{defn:glued_groupoid}
  The groupoid $\G$ of Lemma  \ref{lm:groupoid_struct_on_the_gluing} defines the \emph{gluing} (or \emph{glued groupoid}) of a family of locally compact groupoids $(\G_i)_{i\in I}$ satisfying the gluing condition. We denote it
  \begin{equation*}
     \G = \bigcup_{i \in I} \G_i,
  \end{equation*}
  when there is no ambiguity about the family of isomorphisms $(\phi_{ij})_{i,j}$ involved.
\end{defn}

\begin{rem}
 The glued groupoid can also be defined by a universal property. Assume we only have two groupoids $\G_1 \st U_1$ and $\G_2 \st U_2$, and let $\G_{12} \eqdef \G_1|_{U_1\cap U_2} \simeq \G_2|_{U_1\cap U_2}$. Then $\G = \G_1 \cup \G_2$ is the \emph{pushout} of the inclusions morphisms $j_i: \G_{12} \hookrightarrow \G_i$, for $i = 1,2$. It is the ``smallest'' groupoid such that there is a commutative diagram

$$
\xymatrix{
    \maG & \maG_2 \\
    \maG_1 & \maG_{12}.
    \ar_{} "1,2";"1,1"
    \ar_{} "2,1";"1,1"
    \ar^{j_2}"2,2";"1,2"
    \ar^{j_1}"2,2";"2,1"}
$$

When we have a general family $(\G_i)_{i\in I}$ satisfying the gluing condition, the glued groupoid can similarly be defined as the colimit relative to the inclusions $\G|_{U_i \cap U_j} \hookrightarrow \G_i$, for all $i,j \in I$.
\end{rem}

\begin{rem}
  It is possible for a family $(\G_i)_{i\in I}$ to satisfy the weak gluing condition, even though there is a pair $(\G_{i_0},\G_{j_0})$ that do not satisfy the gluing condition, for some $i_0,j_0 \in I$. For instance, let $X$ be a locally compact, Hausdorff space and $U_1, U_2$ two distinct open subsets in $X$ with non-empty intersection $U_{12}$. Let
  \begin{equation*}
    \G_0 = X \times X, \quad \G_1 = U_1 \times U_1 \quad \text{and} \quad \G_2 = U_2 \times U_2
  \end{equation*}
  be pair groupoids over $X$, $U_1$ and $U_2$ respectively. The family $(\G_0,\G_1,\G_2)$ satisfies the weak gluing condition of Definition \ref{defn:weak_gluing_condition}, and may be glued to obtain the groupoid $\G = X \times X = \G_0$. However, the pair $(\G_1,\G_2)$ does not satisfy the weak gluing condition.
\end{rem}

\begin{lm} \label{lm:gluing_Hausdorff}
  Let $(\G_i)$ be a family of groupoids satisfying the weak gluing condition. If each $\G_i$, for $i \in I$, is a Hausdorff groupoid, then the gluing $\G = \bigcup_{i \in I}\G_i$ is also Hausdorff.
\end{lm}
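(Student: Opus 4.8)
The plan is to separate two distinct points $g,h \in \G$ by disjoint open neighborhoods, splitting into cases according to whether they share the same source and range. The point to keep in mind is that a quotient of Hausdorff spaces need not be Hausdorff (the line with two origins being the standard counterexample), so the proof must genuinely use the two hypotheses at hand: that the base $X$ is Hausdorff and that the weak gluing condition holds. The former controls the pathology at the level of units, while the latter will force any potentially ``problematic'' pair of arrows to lie inside a single Hausdorff chart $\G_i$.

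First I would dispose of the case $d(g) \neq d(h)$ or $r(g)\neq r(h)$. Since the structural maps $d,r : \G \to X$ are continuous (Lemma \ref{lm:pi_i_homeo}) and $X$ is Hausdorff, one chooses disjoint open sets $V,W \subset X$ separating $d(g)$ and $d(h)$ (respectively $r(g)$ and $r(h)$); then $d^{-1}(V), d^{-1}(W)$ (respectively $r^{-1}(V), r^{-1}(W)$) are disjoint open neighborhoods of $g$ and $h$ in $\G$. This settles every configuration except the one where $d(g)=d(h)$ and $r(g)=r(h)$.

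The remaining case is where the obstruction genuinely lives, and here the weak gluing condition does the work. Because $r(g)=r(h)$, the pair $(g^{-1},h)$ satisfies $d(g^{-1}) = r(g) = r(h)$, so it is composable. By Definition \ref{defn:weak_gluing_condition} there is an index $i \in I$ such that both $g^{-1}$ and $h$ have representatives in $\G_i$, i.e. both belong to the reduction $\G|_{U_i}$; since $\G|_{U_i} = d^{-1}(U_i)\cap r^{-1}(U_i)$ is closed under inversion, $g = (g^{-1})^{-1}$ lies in $\G|_{U_i}$ as well. Now $\G|_{U_i}$ is \emph{open} in $\G$ and, by Lemma \ref{lm:pi_i_homeo}, homeomorphic to the Hausdorff space $\G_i$. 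Hence $g$ and $h$ are separated by disjoint relatively open subsets of $\G|_{U_i}$, which are open in $\G$ because $\G|_{U_i}$ is open.

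The main (and really only) obstacle is the last case: the entire content of the lemma is that the weak gluing condition guarantees that two arrows with equal source and range always sit inside one of the Hausdorff building blocks $\G_i$, so no ``doubling'' can occur among arrows, while the Hausdorffness of $X$ rules it out among units. Beyond that, I would only need to confirm the routine facts used above, namely that $d,r$ are continuous, that $\G|_{U_i}$ is open and inverse-closed, and that $\pi_i$ is a homeomorphism onto $\G|_{U_i}$ — all of which are established in the constructions preceding the statement.
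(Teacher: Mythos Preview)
Your proof is correct and follows exactly the same two-case split as the paper's. In fact you are more careful than the paper in the case $d(g)=d(h)$ and $r(g)=r(h)$: you spell out how the weak gluing condition is invoked (by observing that $(g^{-1},h)$ is composable and that $\G|_{U_i}$ is inverse-closed), whereas the paper simply asserts that the gluing condition places $g$ and $h$ in a common $\G|_{U_i}$.
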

\begin{proof}
  Let $g,h \in \G$. There are two cases.
  \begin{itemize}
    \item Assume $d(g) = d(h)$ and $r(g) = r(h)$. Then, because of the gluing condition, there is an $i \in I$ such that $g$ and $h$ are both in the Hausdorff groupoid $\G|_{U_i}$.
    \item Otherwise, either $d(g) \neq d(h)$ or $r(g) \neq r(h)$. Let us assume the former. Then, since $X$ is Hausdorff, there are open sets $U,V \subset X$ such that $d(g) \in U$, $d(h) \in V$ and $U\cap V = \emptyset$. Thus $g \in \G_U$ and $h \in \G_V$, which are disjoint open subsets of $\G$. \qedhere
  \end{itemize}
\end{proof}

We also introduce the \emph{strong gluing condition}, which is often easier to check.
\begin{defn}
  \label{defn:strong_gluing_condition}
  We say that the family $(\G_i \st U_i)_{i\in I}$ of locally compact groupoids satisfy the \emph{strong gluing condition} if, for each $x\in X$, there is an $i_x \in I$ such that
      \begin{equation*}
        \G_i \cdot x \subset U_{i_x}
      \end{equation*}
    for all $i \in I$.
\end{defn}
In other words, the orbit of a point through the action of $\G$ should always be induced by a single element of the family $(\G_i)_{i \in I}$.

\begin{lm}
  Let $(\G_i)_{i \in I}$ be a family of groupoids which satisfies the strong gluing condition. Then the family $(\G_i)_{i \in I}$ also satisfies the weak gluing condition.
\end{lm}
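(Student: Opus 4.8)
The plan is to fix an arbitrary composable pair $(g,h)\in\G^{(2)}$, so that $d(g)=r(h)$, and to exhibit a single index $k\in I$ for which both $g$ and $h$ admit a representative in $\G_k$. Two preliminary observations frame the whole argument. First, every element of $\G$ is by construction the class of some element of some $\G_i$, so it always has at least one representative. Second, by Lemma \ref{lm:pi_i_homeo} an element $\gamma\in\G$ has a representative in $\G_k$ if and only if $\gamma\in\G|_{U_k}$, that is, if and only if both $d(\gamma)$ and $r(\gamma)$ lie in $U_k$. Hence it suffices to find a single $U_k$ containing all four endpoints $d(g),r(g),d(h),r(h)$; and since $r(h)=d(g)$, this reduces to placing the three points $d(h)$, $y:=d(g)=r(h)$ and $r(g)$ in a common $U_k$.

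The natural candidate is the index $i_y$ supplied by the strong gluing condition at the shared point $y$: we have $\G_i\cdot y\subseteq U_{i_y}$ for all $i\in I$. I would choose a representative $g_\ell\in\G_\ell$ of $g$ and a representative $h_j\in\G_j$ of $h$, so that by definition $d(g)=d_\ell(g_\ell)$, $r(g)=r_\ell(g_\ell)$, and similarly for $h$. Since $d_\ell(g_\ell)=d(g)=y$, the point $y$ lies in $U_\ell$, so the unit $u(y)\in\G_\ell$ gives $y\in\G_\ell\cdot y\subseteq U_{i_y}$, while $r(g)=r_\ell(g_\ell)\in\G_\ell\cdot y\subseteq U_{i_y}$. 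For $h$, the representative $h_j$ satisfies $d_j(h_j)=d(h)$ and $r_j(h_j)=r(h)=y$; its inverse $\iota(h_j)\in\G_j$ is then an arrow with domain $y$ and range $d(h)$, whence $d(h)\in\G_j\cdot y\subseteq U_{i_y}$. Thus $d(h),y,r(g)\in U_{i_y}$, so both $g$ and $h$ lie in $\G|_{U_{i_y}}$ and therefore have representatives in $\G_{i_y}$. This is exactly the weak gluing condition, with $k=i_y$.

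The one point that requires care — and the reason for anchoring the argument at $y$ rather than at $d(h)$ — is that the product on $\G$ is not yet available: the weak gluing condition is precisely what Lemma \ref{lm:groupoid_struct_on_the_gluing} uses to make $\mu$ well defined, so invoking it here would be circular. The tempting shortcut ``$r(g)$ lies in the $\G$-orbit of $d(h)$, which the strong condition confines to a single $U$'' secretly uses transitivity of the orbit relation, i.e.\ composition in $\G$. The remedy is to compute each saturation $\G_i\cdot y$ entirely inside the genuine groupoid $\G_i$, using only its own units and inverses, and to let the shared endpoint $y=d(g)=r(h)$ carry the information for $g$ and $h$ simultaneously; no arrow is ever composed across two different members of the family.
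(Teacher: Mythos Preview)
Your proof is correct and follows essentially the same route as the paper: anchor at the shared point $y=d(g)=r(h)$, invoke the strong gluing condition to get an index $i_y$, and use orbits computed inside the individual groupoids $\G_\ell$ and $\G_j$ to place all endpoints in $U_{i_y}$. Your version is more explicit than the paper's in two places --- you spell out the use of the inverse $\iota(h_j)$ to reach $d(h)$ and the use of the unit to get $y\in U_{i_y}$ --- whereas the paper absorbs both into ``the same arguments''; your closing paragraph on avoiding circularity with the (not-yet-defined) product is a helpful clarification not present in the original.
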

\begin{proof}
  Let $(g,h) \in \G^{(2)}$, and assume that $g$ has a representative $g_i \in \G_i$ and $h$ a representative $h_j \in \G_j$. Let $x = d(g) = r(h)$. The gluing condition implies that there is an $i_x \in I$ such that $\G_i \cdot x \subset U_{i_x}$ and $\G_j \cdot x \subset U_{i_x}$. Thus $r_i(g_i) \in U_{i_x}$, so $g_i \in \G_i|_{U_i\cap U_{i_x}}$. But there is an isomorphism
\begin{equation*}
  \phi_{{i_x}i} : \G_i|_{U_i\cap U_{i_x}} \to \G_{i_x}|_{U_i\cap U_{i_x}}
\end{equation*}
so that $g$ actually has a representative $g_{i_x}$ in $\G_{i_x}$. The same arguments show that $h$ also has a representative $h_{i_x} \in \G_{i_x}$.
\end{proof}

We conclude this subsection with a condition for which a groupoid $\G \st X$ may be written as the gluing of its reductions. This definition was introduced by Gualtieri and Li for Lie algebroids \cite{GL2014}.

\subsection{Gluing Lie groupoids}
\label{sub:orbit_covers_and_lie_algebroids}
Let $M$ be a manifold with corners, and $(U_i)_{i\in I}$ an open cover of $M$. Let $(\G_i)_{i \in I}$ be a family of \emph{Lie groupoids} satisfying the weak gluing condition of Definition \ref{defn:weak_gluing_condition}. Assume that the morphisms $\phi_{ji} : \G_i|_{U_i \cap U_j} \to \G_j|_{U_i \cap U_j}$ are Lie groupoid morphisms, and let $\G \eqdef \bigcup_{i\in I} \G_i$ be the glued groupoid over $M$.
\begin{lm}
  If each $\G_i$, for $i \in I$, is a Lie groupoid, then there is a unique Lie groupoid structure on $\G$ such that $\pi_i : \G_i \to \G|_{U_i}$ is an isomorphism of Lie groupoids, for all $i \in I$.
\end{lm}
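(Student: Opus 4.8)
The plan is to upgrade the locally compact groupoid structure already produced in Lemma \ref{lm:groupoid_struct_on_the_gluing} to a smooth one, by transporting the manifold-with-corners structure of each $\G_i$ onto the open piece $\G|_{U_i} \subset \G$ through the homeomorphism $\pi_i$, and then checking that these local structures are mutually compatible.

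First I would endow $\G$ with a manifold-with-corners structure. By Lemma \ref{lm:pi_i_homeo}, each $\pi_i : \G_i \to \G|_{U_i}$ is a homeomorphism onto an open subset of $\G$, and the family $(\G|_{U_i})_{i\in I}$ covers $\G$ (any $g \in \G$ has a representative in some $\G_i$). I declare a subset $V \subset \G|_{U_i}$ to be a chart domain precisely when $\pi_i^{-1}(V)$ is one in $\G_i$, which makes $\pi_i$ a diffeomorphism onto $\G|_{U_i}$. To see that these structures agree on overlaps, observe that on $\G|_{U_i}\cap\G|_{U_j}=\G|_{U_i\cap U_j}$ the transition map is
\[
  \pi_j^{-1}\circ\pi_i = \phi_{ji} : \G_i|_{U_i\cap U_j}\longrightarrow \G_j|_{U_i\cap U_j},
\]
which is a diffeomorphism of manifolds with corners since $\phi_{ji}$ is, by hypothesis, an isomorphism of Lie groupoids. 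Hence the local atlases glue to a well-defined smooth structure on $\G$ for which every $\pi_i$ is a diffeomorphism.

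Next I would verify the Lie groupoid axioms, all of which are local and therefore reduce, via the diffeomorphisms $\pi_i$, to the corresponding axioms already known for each $\G_i$. On $\G|_{U_i}$ the structural maps factor as $d = d_i\circ\pi_i^{-1}$, $r = r_i\circ\pi_i^{-1}$, $u|_{U_i}=\pi_i\circ u_i$, and $\iota = \pi_i\circ\iota_i\circ\pi_i^{-1}$ (using that $\iota$ preserves $\G|_{U_i}$, as the reduction is symmetric in $d$ and $r$), so they are smooth, and $d$ is a tame submersion because $d_i$ is. The only point requiring the gluing hypothesis is smoothness of the product $\mu$: given a composable pair $(g,h)\in\G^{(2)}$, the weak gluing condition (Definition \ref{defn:weak_gluing_condition}), equivalently the fact that $(\G_i^{(2)})_{i\in I}$ is an open cover of $\G^{(2)}$, furnishes an index $i$ with $g,h\in\G|_{U_i}$; on the neighborhood $\pi_i(\G_i^{(2)})$ of $(g,h)$ the product agrees with $\mu_i$ transported by $\pi_i$, and is therefore smooth. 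Since the unit space $M$ is Hausdorff by assumption, $\G$ is a Lie groupoid and each $\pi_i$ is an isomorphism of Lie groupoids.

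Finally, uniqueness follows the pattern of Lemma \ref{lm:groupoid_struct_on_the_gluing}: any Lie groupoid structure on $\G$ making every $\pi_i$ an isomorphism forces $\pi_i$ to be a diffeomorphism onto $\G|_{U_i}$, so the smooth structure is prescribed on each member of the open cover $(\G|_{U_i})_{i\in I}$ and is thus unique, while the underlying groupoid structure is already unique. I expect the only genuinely delicate point to be the compatibility of the transported smooth structures on overlaps; it is precisely there that one uses that the $\phi_{ji}$ are Lie groupoid, and not merely topological, isomorphisms, everything else being a routine transport of structure through the $\pi_i$.
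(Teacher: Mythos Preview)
Your proof is correct and follows essentially the same approach as the paper: transport the smooth structures through the open cover $(\G|_{U_i})_{i\in I}$, use that the transition maps $\pi_j^{-1}\circ\pi_i=\phi_{ji}$ are diffeomorphisms, and check the structural maps locally. Your version is in fact more thorough than the paper's, which leaves the uniqueness claim and the role of the weak gluing condition for the product implicit.
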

\begin{proof}
  By Definition \ref{defn:glued_groupoid}, the reductions $\G|_{U_i}\simeq \G_i$, for $i \in I$, provide an open cover of $\G$. Since each $\G_i$ is a Lie groupoid, and all $\phi_{ij}$ are smooth, this induces a manifold structure on $\G$. Each structural map of $\G$ coincides locally with a structural map of one of the groupoids $\G_i$, hence is smooth. This gives the Lie groupoid structure.
\end{proof}
\begin{rem}
  A similar statement holds when each $\G_i$ is a continuous family groupoids, for all $i \in I$: then $\G$ is also a continuous family groupoid \cite{LMN2000,Pat2001}.
\end{rem}

To specify the Lie algebroid of $\G$, we need first study the gluing of Lie algebroids. For each $i \in I$, let $A_i \to U_i$ be a Lie algebroid. Assume that there are Lie algebroid isomorphisms $\psi_{ij} : A_i|_{U_i \cap U_j} \to A_j|_{U_i \cap U_j}$ covering the identity, such that $\psi_{ij}^{-1} = \psi_{ji}$ and $\psi_{ij}\psi_{jk} = \psi_{ik}$ on common domains. As vector bundles, the family $(A_i)_{i\in I}$ is in particular a family of groupoids that satisfies the strong gluing condition of Definition \ref{defn:strong_gluing_condition} (the orbit of any $x \in M$ is reduced to $\{x\}$). Thus, the gluing $A = \bigcup_{i \in I} A_i$ is a smooth vector bundle on $M$, with inclusion maps $\pi_i : A_i \hookrightarrow A$.

\begin{lm}
  There is a unique Lie algebroid structure on $A = \bigcup_{i \in I} A_i$ such that each map $\pi_i : A_i \to A$ is a morphism of Lie algebroids.
\end{lm}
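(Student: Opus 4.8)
The plan is to equip $A = \bigcup_{i\in I} A_i$ with an anchor and a bracket by transporting the corresponding structures of the local algebroids $A_i$ through the vector bundle isomorphisms $\pi_i : A_i \to A|_{U_i}$, and to verify that these transported structures glue. Since the vector bundle structure on $A$ is already in hand, only the anchor map and the Lie bracket on $\Gamma(A)$ need to be produced.

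First I would define the anchor. On each $U_i$, set $\rho := \rho_i \circ \pi_i^{-1}$, where $\rho_i$ denotes the anchor of $A_i$. Because the isomorphisms $\psi_{ij}$ cover the identity of $U_i \cap U_j$, the commutative square in the definition of a Lie algebroid morphism forces $\rho_j \circ \psi_{ij} = \rho_i$ on the overlap; translated through the $\pi_i$, this shows that the local definitions of $\rho$ agree on $U_i \cap U_j$. As the $(U_i)_{i\in I}$ cover $M$ and $\rho$ is a pointwise (order-zero) operation, we obtain a well-defined smooth bundle morphism $\rho : A \to TM$.

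The bracket is the substantive point. For $s,t \in \Gamma(A)$ and $x \in M$, I would pick an index $i$ with $x \in U_i$, transport the restrictions to $s_i := \pi_i^{-1}(s|_{U_i})$ and $t_i := \pi_i^{-1}(t|_{U_i})$ in $\Gamma(A_i)$, and set $[s,t](x) := (\pi_i [s_i,t_i]_i)(x)$, where $[\cdot,\cdot]_i$ is the bracket of $A_i$. Two facts make this consistent. On overlaps, $\psi_{ij}$ is by hypothesis a Lie algebroid morphism and hence intertwines $[\cdot,\cdot]_i$ with $[\cdot,\cdot]_j$, so the two prescriptions coincide on $U_i \cap U_j$. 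Independence of the chosen index then follows from the \emph{locality} of a Lie algebroid bracket: the Leibniz rule $[s_i,ft_i]_i = f[s_i,t_i]_i + (\rho_i(s_i)f)\,t_i$ shows that $[\cdot,\cdot]_i$ is a first-order bidifferential operator, so the germ of $[s,t]$ at $x$ depends only on the germs of $s$ and $t$ at $x$, which are captured inside any $U_i \ni x$. Hence $[s,t](x)$ is unambiguous and depends smoothly on $x$, defining a global bracket on $\Gamma(A)$.

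It then remains to check the algebroid axioms and uniqueness, which I expect to be routine. $\RR$-bilinearity, antisymmetry, the Jacobi identity, the Leibniz rule, and the compatibility $\rho([s,t]) = [\rho(s),\rho(t)]$ are all local conditions that hold on each $U_i$ because they hold for $(A_i,\rho_i,[\cdot,\cdot]_i)$; therefore they hold on $M$, making $A$ a Lie algebroid for which each $\pi_i$ is, by construction, a morphism covering the inclusion $U_i \hookrightarrow M$. For uniqueness, any Lie algebroid structure on $A$ rendering every $\pi_i$ a morphism must restrict on each $U_i$ to $\rho_i$ and $[\cdot,\cdot]_i$ transported through $\pi_i$; since the $U_i$ cover $M$ and both the anchor (pointwise) and the bracket (local) are determined by their restrictions to the cover, it coincides with the structure just built. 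The main obstacle is precisely this globalization of the bracket: patching the $[\cdot,\cdot]_i$ into a single operation on $\Gamma(A)$ works only because the bracket is a local operator, so it descends to a sheaf that the overlap-compatibility of the $\psi_{ij}$ allows us to glue; a partition-of-unity argument would give the same result but is less transparent than invoking locality directly.
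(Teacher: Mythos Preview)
Your proposal is correct and follows essentially the same approach as the paper: define the bracket and anchor locally via $\pi_i$, use that the $\psi_{ij}$ are Lie algebroid isomorphisms to check agreement on overlaps, and deduce uniqueness from the fact that the $U_i$ cover $M$. The paper's proof is terser and omits your explicit discussion of locality and the verification of the algebroid axioms, but the underlying argument is the same.
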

\begin{proof}
  By definition, the Lie algebroids $A|_{U_i} \simeq A_i$, for all $i \in I$, provide an open cover of $A$. Let $X,Y \in \Gamma(A)$, and define $[X,Y] \in \Gamma(A)$ by
  \begin{equation*}
    [X,Y]|_{U_i} \eqdef [X|_{U_i},Y|_{U_i}]_i,
  \end{equation*}
  where $[.,.]_i$ is the Lie bracket on $A_i$. Since $A_i|_{U_i \cap U_j}$ and $A_j|_{U_i \cap U_j}$ are isomorphic as Lie algebroid, the section $[X,Y]$ is well-defined on $U_i \cap U_j$, for all $i,j \in I$. This defines the Lie bracket on $\Gamma(A)$. The anchor map is similarly defined as $\rho(X)|_{U_i} \eqdef \rho_i(X|_{U_i})$, with $\rho_i$ the anchor map of $A_i$. Because the family $(A_i)_{i\in I}$ covers $A$, this it is the unique Lie algebroid structure on $A$ such that each map $\pi : A_i \to A|_{U_i}$ is a Lie algebroid isomorphism.
\end{proof}

\begin{lm} \label{lm:gluing_algebroids}
  Let $(\G_i\st U_i)_{i \in I}$ be a family of Lie groupoids satisfying the gluing condition, with isomorphisms $\phi_{ij} : \G_j|_{U_i\cap U_j} \to \G_i|_{U_i\cap U_j}$. The Lie algebroid of the resulting glued groupoid $\G = \bigcup_{i \in I} \G_i$ is isomorphic to the gluing of the family $(\A\G_i)_{i \in I}$, with Lie algebroid isomorphisms $(\phi_{ij})_* : \A\G_i|_{U_i \cap U_j} \to \A\G_j|_{U_i \cap U_j}$, for $i,j \in I$.
\end{lm}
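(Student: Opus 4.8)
The plan is to exhibit both $\A\G$ and the glued algebroid $\bigcup_{i\in I}\A\G_i$ as objects assembled from the very same local pieces by the very same cocycle, and then run a local-to-global argument to produce the isomorphism. Two functoriality properties of the Lie algebroid construction of Example \ref{ex:lie_algebroid_of_a_groupoid} underlie everything. First, that construction is \emph{local over the unit space}: if $V\subset M$ is open, then $\G|_V$ is an open subgroupoid and, since $\A\G=(\ker d_*)|_M=\bigsqcup_x T_x\G_x$ depends only on an arbitrarily small neighborhood of the unit section, there is a canonical identification $\A(\G|_V)\simeq(\A\G)|_V$ of Lie algebroids over $V$. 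Second, differentiation is \emph{functorial}: any isomorphism $\psi$ of Lie groupoids differentiates to an isomorphism $\psi_*$ of the associated Lie algebroids, with $(\psi\circ\chi)_*=\psi_*\circ\chi_*$ and $(\mathrm{id})_*=\mathrm{id}$.

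Applying functoriality to the transition isomorphisms $\phi_{ji}\colon\G_i|_{U_i\cap U_j}\to\G_j|_{U_i\cap U_j}$, I obtain Lie algebroid isomorphisms $(\phi_{ji})_*\colon\A\G_i|_{U_i\cap U_j}\to\A\G_j|_{U_i\cap U_j}$ covering the identity. The cocycle identities $\phi_{ij}=\phi_{ji}^{-1}$ and $\phi_{ij}\phi_{jk}=\phi_{ik}$ on common domains then transport, again by functoriality, to $(\phi_{ij})_*=(\phi_{ji})_*^{-1}$ and $(\phi_{ij})_*(\phi_{jk})_*=(\phi_{ik})_*$. Hence the family $(\A\G_i)_{i\in I}$ together with the differentiated cocycle is exactly the gluing data required by the gluing construction for Lie algebroids established above, and the glued Lie algebroid $A\eqdef\bigcup_{i\in I}\A\G_i$ is well-defined, equipped with its canonical inclusions $\iota_i\colon\A\G_i\to A$.

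Next I would assemble the global isomorphism. By the definition of the equivalence relation defining $\G$, the quotient maps satisfy $\pi_i=\pi_j\circ\phi_{ji}$ on $\G_i|_{U_i\cap U_j}$; differentiating and using the identification $(\A\G)|_{U_i}\simeq\A(\G|_{U_i})$, the maps $(\pi_i)_*\colon\A\G_i\to(\A\G)|_{U_i}$ are Lie algebroid isomorphisms satisfying $(\pi_i)_*=(\pi_j)_*\circ(\phi_{ji})_*$ on overlaps. This is precisely the compatibility needed for the $(\pi_i)_*$ to descend, by the uniqueness clause of the gluing construction, to a single Lie algebroid morphism $\Phi\colon A\to\A\G$ with $\Phi\circ\iota_i=(\pi_i)_*$ for every $i$. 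Since each $(\pi_i)_*$ is an isomorphism and the $U_i$ cover $M$, the morphism $\Phi$ is a fiberwise isomorphism covering $\mathrm{id}_M$, hence an isomorphism of Lie algebroids, which is the claim.

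The only genuinely delicate point, which I would justify carefully rather than the routine index bookkeeping, is the local identification $\A(\G|_V)\simeq(\A\G)|_V$ for $V$ open, together with the compatibility of the Lie brackets under it. The bracket on $\A\G$ is defined via right-invariant vector fields tangent to the $d$-fibers, and passing to the open reduction over $V$ neither changes these vector fields nor their brackets, because both are determined in an arbitrarily small neighborhood of the unit section. Once this is granted, every remaining step is a formal consequence of the functoriality of differentiation and of the uniqueness in the gluing construction, so no further computation is required.
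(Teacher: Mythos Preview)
Your proposal is correct and follows essentially the same approach as the paper's proof: both define the map from the glued algebroid to $\A\G$ via the differentials $(\pi_i)_*$, verify compatibility on overlaps through the identity $(\pi_i)_*=(\pi_j)_*\circ(\phi_{ji})_*$ (which the paper derives from $\pi_j^{-1}\circ\pi_i=\phi_{ji}$), and conclude that the resulting map is an isomorphism because each $(\pi_i)_*$ is. You are simply more explicit than the paper about the locality identification $\A(\G|_V)\simeq(\A\G)|_V$ and the cocycle verification at the algebroid level, both of which the paper uses without comment.
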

\begin{proof}
  By definition of the quotient maps $\pi_i : \G_i \to \G$, the map $\pi_j^{-1} \circ \pi_i$ coincides with the isomorphism $\phi_{ji}: \G_i|_{U_i \cap U_j}\to\G_j|_{U_i \cap U_j}$, for all $i,j \in I$. Let $\xi \in A\G_i|_{U_i \cap U_j}$. Then
  \begin{equation} \label{eq:Lie_algebroid_quotient}
    (\pi_i)_*(\xi) = (\pi_j)_* \circ (\pi_j^{-1}\circ\pi_i)_*(\xi) = (\pi_j)_* \circ (\phi_{ji})_*(\xi) \in \A\G|_{U_i \cap U_j}
  \end{equation}

  Let $\Psi : \bigsqcup_{i\in I} \A\G_i \to \A\G$ bet the map given by $\Psi(\xi) \eqdef (\pi_i)_*(\xi)$, whenever $\xi \in \A\G_i$. Equation \eqref{eq:Lie_algebroid_quotient} implies that $\Psi$ induces a map from the quotient $A =\bigcup_{i \in I} \A\G_i$, which is the glued algebroid, to $\A\G$. Each map $\pi_i : \G_i \to \G$ gives an isomorphism $(\pi_i)_* : \A\G_i \to \A\G|_{U_i}$, so $\Psi : A \to \A\G$ is also a Lie algebroid isomorphism.
\end{proof}

\subsection{Boundary action groupoids}
\label{sub:boundary_action_groupoids}

Our aim is to study Fredholm conditions for algebras of differential operators generated by Lie groupoids $\G \st M$. To this end, we define the class of \emph{boundary action groupoids}, which are obtained by gluing reductions of action groupoids. We will show that many examples of groupoids arising in analysis on open manifold belong to this class, and obtain Fredholm condition for the associated differential operators.

Recall that gluing conditions were discussed in Subsection \ref{sub:gluing_groupoids}.

\begin{defn} \label{defn:boundary_action_groupoids}
  A Lie groupoid $\G\dr M$ is \emph{a boundary action groupoid} if
  \begin{enumerate}
    \item \label{it:defn_bag_1} there is an open dense $\G$-invariant subset $U \subset M$ such that $\G_U \simeq U \times U$;
    \item \label{it:defn_bag_2} there is an open cover $(U_i)_{i\in I}$ of $M$ such that for all $i \in I$, we have a Hausdorff manifold $X_i$, a Lie group $G_i$ acting smoothly on $X_i$ on the right and an open subset $U_i' \subset X_i$ diffeomorphic to $U_i$ satisfying
      \begin{equation*}
        \G|_{U_i} \simeq (X_i \rtimes G_i)|_{U'_i};
      \end{equation*}
    \item \label{it:defn_bag_3} the family of groupoids $(\G|_{U_i})_{i \in I}$ satisfy the weak gluing condition, with the obvious identifications of $\G|_{U_i}$ and $\G|_{U_j}$ with $\G|_{U_i\cap U_j}$ over common domains.
  \end{enumerate}
\end{defn}

In other words, boundary action groupoids are groupoids that are obtained by gluing reductions of action groupoids, and that are simply the pair groupoid over a dense orbit. Note that, as an open dense $\G$-orbit in $M$, the subset $U$ in Definition \ref{defn:boundary_action_groupoids} is uniquely determined by $\G$.

\begin{ex} \label{ex:bag_pair_groupoid}
  If $M_0$ is a smooth manifold (without corners), then the pair groupoid $\G = M_0 \times M_0$ is a boundary action groupoid. Indeed, for any triple $(x,y,z) \in M_0^3$, we can choose an open subset $U_{x,y,z} \subset M_0$ that contains $x$, $y$, and $z$ and is such that $U_{x,y,z}$ is diffeomorphic to an open subset $U'_{x,y,z} \subset \R^n$ (just choose three disjoint, relatively compact coordinate charts near each point $x$, $y$ and $z$ and take $U_{x,y,z}$ to be their disjoint union). Then
  \begin{equation*}
    \G|_{U_{x,y,z}} \simeq (\R^n \times \R^n)|_{U'_{x,y,z}} \simeq (\R_n \rtimes \R^n)|_{U'_{x,y,z}},
  \end{equation*}
  where $\R^n$ acts on itself by translation. Moreover the family of groupoids $(\G|_{U_{x,y,z}})$, for $x,y,z \in M_0$, satisfy the weak gluing condition: for any composable pair $(x,y)$ and $(y,z)$ in $\G$, both $(x,y)$ and $(y,z)$ are contained in $\G|_{U_{x,y,z}}$. This shows \eqref{it:defn_bag_2} and \eqref{it:defn_bag_3} from Definition \ref{defn:boundary_action_groupoids}, whereas \eqref{it:defn_bag_1} is trivially satisfied.
\end{ex}

Other practical examples will be introduced in Subsection \ref{sub:examples} below. One of the main points of this definition is to have a good understanding of how $\G_{U}$ and $\G_{M\setminus U}$ are glued together near the boundary. In particular:
\begin{lm} \label{lm:boundary_action_Hausdorff}
  Boundary action groupoids are Hausdorff.
\end{lm}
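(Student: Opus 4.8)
The plan is to deduce the result directly from the gluing lemma for Hausdorff groupoids, Lemma~\ref{lm:gluing_Hausdorff}. By Definition~\ref{defn:boundary_action_groupoids}, a boundary action groupoid $\G$ is precisely the gluing $\G=\bigcup_{i\in I}\G|_{U_i}$ of the family of reductions $(\G|_{U_i})_{i\in I}$, which satisfies the weak gluing condition by item~\eqref{it:defn_bag_3}. Since $\G$ is a Lie groupoid, its unit space $M$ is Hausdorff, so the only remaining hypothesis of Lemma~\ref{lm:gluing_Hausdorff} left to verify is that each member $\G|_{U_i}$ of the gluing family is itself a Hausdorff groupoid.

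For that, first I would use item~\eqref{it:defn_bag_2}, which provides an isomorphism $\G|_{U_i}\simeq (X_i\rtimes G_i)|_{U_i'}$. By Example~\ref{ex:action_groupoid}, the underlying space of the action groupoid $X_i\rtimes G_i$ is the product $X_i\times G_i$; this is Hausdorff because $X_i$ is a Hausdorff manifold by hypothesis and $G_i$ is a Lie group, hence Hausdorff. Next, the reduction to the open subset $U_i'\subset X_i$ is $(X_i\rtimes G_i)|_{U_i'}=d^{-1}(U_i')\cap r^{-1}(U_i')$, which is open since $d$ and $r$ are continuous; an open subspace of a Hausdorff space is Hausdorff, so $\G|_{U_i}$ is Hausdorff for every $i\in I$.

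With each $\G|_{U_i}$ shown to be Hausdorff and the weak gluing condition in hand, I would invoke Lemma~\ref{lm:gluing_Hausdorff} to conclude that $\G=\bigcup_{i\in I}\G|_{U_i}$ is Hausdorff.

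There is essentially no serious obstacle here: the statement is a bookkeeping corollary of the gluing machinery already set up. The one point requiring care is to make sure that the gluing realizing $\G$ in Definition~\ref{defn:boundary_action_groupoids} is literally the gluing of Definition~\ref{defn:glued_groupoid} to which Lemma~\ref{lm:gluing_Hausdorff} applies---that is, that the identifications of $\G|_{U_i}$ and $\G|_{U_j}$ over $U_i\cap U_j$ are the transition isomorphisms $\phi_{ij}$ of the gluing construction---so that the hypotheses match verbatim. Once this identification is recorded, the argument is immediate.
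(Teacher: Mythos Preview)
Your proposal is correct and follows essentially the same approach as the paper: observe that each $(X_i\rtimes G_i)|_{U_i'}$ is Hausdorff as a subspace of the Hausdorff product $X_i\times G_i$, then invoke Lemma~\ref{lm:gluing_Hausdorff} using the weak gluing condition from item~\eqref{it:defn_bag_3}. The paper's argument is slightly terser, but the substance is identical.
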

\begin{proof}
  We keep the notations of Definition \ref{defn:boundary_action_groupoids} above. Note that all $(X_i \rtimes G_i)|_{U'_i}$ are Hausdorff groupoids (as subsets of the Hausdorff spaces $X_i\times G_i$). Since the groupoids $(\G|_{U_i})_{i\in I}$ satisfy the weak gluing condition, the groupoid $\G$ is obtained by gluing Hausdorff groupoids. The result then follows from Lemma \ref{lm:gluing_Hausdorff}.
\end{proof}

Lemmas \ref{lm:bag_product} to \ref{lm:bag_attaching_ends} give some possible combinations of boundary action groupoids that preserve the local structure.

\begin{lm} \label{lm:bag_product}
  Let $\G \dr M$ and $\maH \dr N$ be boundary action groupoids. Then $\G\times\maH \dr M \times N$ is a boundary action groupoid.
\end{lm}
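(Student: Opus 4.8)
The plan is to verify the three conditions of Definition \ref{defn:boundary_action_groupoids} for $\G \times \maH \dr M \times N$ directly, transporting the defining data of $\G$ and $\maH$ through the product. Write $U \subset M$ and $V \subset N$ for the respective open dense orbits, with $\G_U \simeq U \times U$ and $\maH_V \simeq V \times V$, and let $(U_i)_{i \in I}$, $(V_j)_{j \in J}$ be the covers furnished by condition \eqref{it:defn_bag_2}, with associated data $(X_i, G_i, U_i')$ and $(Y_j, H_j, V_j')$. I would take the candidate dense orbit to be $W \eqdef U \times V$ and the candidate cover to be $(U_i \times V_j)_{(i,j) \in I \times J}$. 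Two elementary facts will be used repeatedly: reduction commutes with products, that is $(\G \times \maH)|_{A \times B} = \G|_A \times \maH|_B$ for open $A \subset M$, $B \subset N$ (since $d$ and $r$ act componentwise on $\G \times \maH$), and the orbit of $(x,y)$ under $\G \times \maH$ is $(\G \cdot x) \times (\maH \cdot y)$.

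For condition \eqref{it:defn_bag_1}, the set $W$ is open and dense in $M \times N$, and it is $(\G \times \maH)$-invariant because $U$ and $V$ are invariant. To identify $(\G \times \maH)|_{W}$ with the pair groupoid $W \times W$, I would combine the reduction identity above with the fact that the product of pair groupoids is the pair groupoid of the product, via the reordering bijection $((x,x'),(y,y')) \mapsto ((x,y),(x',y'))$. This gives $(\G \times \maH)|_{U \times V} = \G_U \times \maH_V \simeq (U \times U) \times (V \times V) \simeq (U \times V) \times (U \times V) = W \times W$. For condition \eqref{it:defn_bag_2}, again $(\G \times \maH)|_{U_i \times V_j} = \G|_{U_i} \times \maH|_{V_j} \simeq (X_i \rtimes G_i)|_{U_i'} \times (Y_j \rtimes H_j)|_{V_j'}$. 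The key structural observation is that, after reordering the factors $X_i \times G_i \times Y_j \times H_j \cong (X_i \times Y_j) \times (G_i \times H_j)$, the product of two action groupoids is canonically the action groupoid of the product group acting componentwise,
\[
  (X_i \rtimes G_i) \times (Y_j \rtimes H_j) \;\simeq\; (X_i \times Y_j) \rtimes (G_i \times H_j),
\]
with the componentwise structural maps matching those of Example \ref{ex:action_groupoid}. Since this isomorphism also respects reduction to the open set $U_i' \times V_j' \subset X_i \times Y_j$, I obtain the required local model, with Hausdorff manifold $X_i \times Y_j$, Lie group $G_i \times H_j$, and open set $U_i' \times V_j'$ diffeomorphic to $U_i \times V_j$.

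For condition \eqref{it:defn_bag_3}, given a composable pair $((g_1,h_1),(g_2,h_2)) \in (\G \times \maH)^{(2)}$, the pairs $(g_1,g_2)$ and $(h_1,h_2)$ are composable in $\G$ and $\maH$ respectively. The weak gluing conditions for $\G$ and $\maH$ then produce indices $i \in I$ and $j \in J$ such that $g_1, g_2$ have representatives in $\G|_{U_i}$ and $h_1, h_2$ have representatives in $\maH|_{V_j}$; hence $(g_1,h_1)$ and $(g_2,h_2)$ both have representatives in $(\G \times \maH)|_{U_i \times V_j}$, which is exactly the weak gluing condition for the product family. None of these steps is genuinely hard; the only point demanding care is the compatibility of the several reordering isomorphisms (of pair groupoids in \eqref{it:defn_bag_1} and of action groupoids in \eqref{it:defn_bag_2}) with the componentwise domain, range, product, and reduction maps, which is where I expect to spend most of the verification effort.
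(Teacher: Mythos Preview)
Your proposal is correct and follows essentially the same approach as the paper's proof: both take $W = U \times V$ as the dense orbit, use the product cover $(U_i \times V_j)_{(i,j)}$, and identify $(\G\times\maH)|_{U_i\times V_j}$ with a reduction of $(X_i \times Y_j)\rtimes(G_i \times H_j)$ under the componentwise action. Your write-up is in fact more detailed than the paper's, spelling out explicitly why reduction commutes with products and how the weak gluing condition transfers to the product family.
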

\begin{proof}
  First, let $U, V$ be the respective open dense orbits of $\G$ and $\maH$. Then $U \times V$ is an open dense orbit for $\G \times \maH$, and $(\G\times\maH)_{U\times V}$ is the pair groupoid $(U\times V)^2$. Secondly, let $(U_i)_{i\in I}$ and $(V_j)_{j \in J}$ be respective open covers of $M$ and $N$ such that we have isomorphisms
  \begin{equation*}
    \G|_{U_i} \simeq (X_i \rtimes G_i)_{U'_i} \qquad \text{and} \qquad \maH|_{V_j} \simeq (Y_j \rtimes H_j)|_{V'_j}
  \end{equation*}
and both families $(\G|_{U_i})_{i \in I}$ and $(\maH|_{V_j})_{j \in J}$ satisfy the weak gluing condition. Then the family $\{(\G\times\maH)|_{U_i \times V_j}\}$, for  $i\in I$ and  $j\in J$, satisfy the weak gluing condition over $M \times N$ and we have
\begin{equation*}
  (\G\times\maH)|_{U_i\times V_j} \simeq (X_i \times Y_j)\rtimes(G_i \times H_j),
\end{equation*}
for all $i \in I$ and $j \in J$, where the action of $G_i \times H_j$ is the product action.
\end{proof}

\begin{lm} \label{lm:bag_reduction}
  Let $\G\dr M$ be a boundary action groupoid and $V$ an open subset of $M$. Then $\G|_V$ is a boundary action groupoid.
\end{lm}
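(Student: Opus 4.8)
The plan is to verify the three conditions of Definition \ref{defn:boundary_action_groupoids} for the reduction $\G|_V \dr V$ directly, by transporting the data $(U,(U_i)_{i\in I},X_i,G_i,U_i')$ that witnesses the boundary action structure of $\G$ and intersecting everything with $V$. The one elementary fact I will use repeatedly is that reductions compose: for $B \subset A \subset M$ one has $(\G|_A)|_B = \G|_B$, so in particular $(\G|_V)|_W = \G|_W$ for every $W \subset V$. I would also record at the outset that $\G|_V = d^{-1}(V)\cap r^{-1}(V)$ is open in $\G$, hence is itself a Lie groupoid over the open set $V$, with all structure maps obtained by restriction.

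First I would handle the dense orbit, establishing condition \eqref{it:defn_bag_1}. Let $U$ be the open dense $\G$-orbit with $\G|_U \simeq U\times U$. Since $U$ is open and dense in $M$ and $V$ is open, $U\cap V$ is open and dense in $V$. It is $\G|_V$-invariant: if $x\in U\cap V$ and $g\in\G|_V$ with $d(g)=x$, then $r(g)\in V$ because $g\in\G|_V$, and $r(g)\in \G\cdot x \subset U$ because $U$ is $\G$-invariant, so $r(g)\in U\cap V$. Using that reductions compose together with $U\cap V\subset U$, I obtain
\[
  (\G|_V)|_{U\cap V} \;=\; \G|_{U\cap V} \;=\; (\G|_U)|_{U\cap V} \;\simeq\; (U\times U)|_{U\cap V} \;=\; (U\cap V)\times(U\cap V),
\]
which is the pair groupoid. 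Thus condition \eqref{it:defn_bag_1} holds with $U_V \eqdef U\cap V$.

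Next I would produce the local action-groupoid charts for condition \eqref{it:defn_bag_2}. Take the open cover $(U_i\cap V)_{i\in I}$ of $V$. For each $i$, fix the isomorphism $\G|_{U_i}\simeq (X_i\rtimes G_i)|_{U_i'}$ covering a diffeomorphism $f_i:U_i\to U_i'$ on units, and set $W_i \eqdef f_i(U_i\cap V)$, an open subset of $X_i$ diffeomorphic to $U_i\cap V$. Composing reductions once more gives
\[
  (\G|_V)|_{U_i\cap V} \;=\; \G|_{U_i\cap V} \;=\; (\G|_{U_i})|_{U_i\cap V} \;\simeq\; \bigl((X_i\rtimes G_i)|_{U_i'}\bigr)|_{W_i} \;=\; (X_i\rtimes G_i)|_{W_i},
\]
which is exactly condition \eqref{it:defn_bag_2}, with the \emph{same} manifolds $X_i$ and groups $G_i$ and the open sets $W_i$.

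Finally, for the weak gluing condition \eqref{it:defn_bag_3}, I would start from a composable pair $(g,h)\in(\G|_V)^{(2)}$. As $\G|_V$ is a subgroupoid of $\G$, the pair is also composable in $\G$, so the weak gluing condition for $\G$ yields an $i\in I$ such that the three units $r(g)$, $d(g)=r(h)$, $d(h)$ all lie in $U_i$; but they also lie in $V$ since $g,h\in\G|_V$, hence they lie in $U_i\cap V$, which means $g,h\in\G|_{U_i\cap V}=(\G|_V)|_{U_i\cap V}$. The identifications over common domains are again the obvious ones inherited from $\G$. This is the only step that requires an actual argument beyond bookkeeping, and its content is merely that intersecting the units with the open set $V$ does not destroy the covering of composable arrows; so I do not anticipate a genuine obstacle. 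Having checked \eqref{it:defn_bag_1}--\eqref{it:defn_bag_3}, I conclude that $\G|_V$ is a boundary action groupoid.
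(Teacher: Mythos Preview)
Your proof is correct and follows essentially the same approach as the paper's own proof: intersect the dense orbit and the local charts with $V$, and observe that the weak gluing condition survives because any composable pair in $\G|_V$ is already composable in $\G$. Your version is more explicit about verifying $\G|_V$-invariance of $U\cap V$ and about the chain of reduction isomorphisms, but the structure and ideas are identical.
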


\begin{proof}
  Let $U$ be the unique open dense orbit of $\G$. Then $U\cap V$ is the unique open dense orbit of $\G|_V$, and $(\G|_{U\cap V}) \simeq (U\times U)|_{U\cap V}$ is the pair groupoid of $U \cap V$. Moreover, there is an open cover $(U_i)_{i \in I}$ of $M$ with isomorphisms
  \begin{equation*}
    \G|_{U_i} \simeq (X_i \rtimes G_i)|_{U'_i}
  \end{equation*}
  for all $i \in I$, and such that the family $(\G|_{U_i})_{i \in I}$ satisfy the weak gluing condition. For all $i \in I$, let $V_i = U_i \cap V$ and $V'_i$ be the image of $V_i$ in $U'_i$. The weak gluing condition imply that, for any pair $(g,h)$ of composable arrows in $\G|_V$, there is an $i \in I$ such that $g,h$ are both in $\G|_{U_i}$. Then $g,h \in \G|_{V_i}$, which shows that the family $(\G|_{V_i})_{i \in I}$ satisfy the weak gluing condition. Finally, we have isomorphisms
  \begin{equation*}
    \G|_{V_i} = \G|_{U_i \cap V} \simeq (X_i \rtimes G_i)|_{V'_i}
  \end{equation*}
  for all $i \in I$, which concludes the proof.
\end{proof}

\begin{lm} \label{lm:bag_attaching_ends}
  Let $M$ be a manifold with corners, and assume that we have two open subsets $U,V \subset M$ such that
  \begin{enumerate}
    \item the set $U$ is dense in $M$ and $M = U \cup V$,
    \item there is a boundary action groupoid $\maH \dr V$ whose unique open dense orbit is $U \cap V$.
  \end{enumerate}
  Then the glued groupoid $\G = \maH \cup (U\times U)$ over $M$ is a boundary action groupoid.
\end{lm}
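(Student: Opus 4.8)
The plan is to realize $\G$ as a two–piece gluing over the open cover $\{U,V\}$ of $M$ and then verify the three clauses of Definition \ref{defn:boundary_action_groupoids} in turn, reducing each to the corresponding property of the two pieces $U\times U$ and $\maH$.

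First I would set up the gluing datum. Since $U\cap V$ is the unique open dense orbit of the boundary action groupoid $\maH$, clause \eqref{it:defn_bag_1} of Definition \ref{defn:boundary_action_groupoids} applied to $\maH$ gives a Lie groupoid isomorphism $\maH|_{U\cap V}\simeq (U\cap V)\times(U\cap V)$. On the other hand $(U\times U)|_{U\cap V}$ is literally the pair groupoid of $U\cap V$. Composing these identifications produces a (smooth) isomorphism $\phi$ of the reductions of $\maH$ and of $U\times U$ to $U\cap V$; as only two pieces are present, the only compatibility to impose is $\phi_{VU}=\phi_{UV}^{-1}$, which we take as a definition. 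This is exactly the data needed to form the glued space $\G=\maH\cup(U\times U)$.

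The key step, and the one I expect to be the main obstacle, is to check the \emph{weak gluing condition} of Definition \ref{defn:weak_gluing_condition} for the family $\{U\times U,\maH\}$; this is what promotes $\G$ to a genuine Lie groupoid via the Lie version of Lemma \ref{lm:groupoid_struct_on_the_gluing}. The crucial point is that the $\maH$-invariance of $U\cap V$ forces the $\G$-invariance of $U$: any arrow $g$ of $\G$ with $d(g)\in U$ either comes from $U\times U$, so $r(g)\in U$, or comes from $\maH$ with $d(g)\in U\cap V$, in which case invariance of the dense orbit $U\cap V$ inside $\maH$ gives $r(g)\in U\cap V\subset U$. With this in hand I would dispatch a composable pair $(g,h)$, writing $x:=d(g)=r(h)$, by the dichotomy $x\in U$ versus $x\notin U$. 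If $x\notin U$ then $x\in V$ and neither $g$ nor $h$ can lie in $U\times U$, so both have representatives in $\maH$; if $x\in U$, then the $\G$-invariance of $U$ shows both $g$ and $h$ have all endpoints in $U$, hence representatives in $U\times U$. In either case both arrows lie in a single piece, which is precisely the weak gluing condition. (Hausdorffness of $\G$ is then automatic, either from Lemma \ref{lm:gluing_Hausdorff} since both pieces are Hausdorff, or a posteriori from Lemma \ref{lm:boundary_action_Hausdorff}.)

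Finally I would verify the three clauses of Definition \ref{defn:boundary_action_groupoids} for $\G$. Clause \eqref{it:defn_bag_1} is immediate: $U$ is open and dense by hypothesis, is $\G$-invariant by the observation above, and $\G|_U\simeq U\times U$ by construction. For clauses \eqref{it:defn_bag_2} and \eqref{it:defn_bag_3} I would refine the cover. Recall that $U$ is a smooth manifold without corners, being the base of the pair groupoid $U\times U$, so Example \ref{ex:bag_pair_groupoid} furnishes an open cover $(U_i)_{i\in I}$ of $U$ realizing $U\times U$ as a boundary action groupoid with $(U\times U)|_{U_i}\simeq (X_i\rtimes G_i)|_{U_i'}$, while the boundary action groupoid $\maH$ furnishes an open cover $(V_j)_{j\in J}$ of $V$ with $\maH|_{V_j}\simeq (Y_j\rtimes H_j)|_{V_j'}$. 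Their union $(U_i)_{i\in I}\cup(V_j)_{j\in J}$ is an open cover of $M=U\cup V$, and since $U_i\subset U$ and $V_j\subset V$ we get $\G|_{U_i}\simeq (X_i\rtimes G_i)|_{U_i'}$ and $\G|_{V_j}\simeq (Y_j\rtimes H_j)|_{V_j'}$, giving clause \eqref{it:defn_bag_2}. Clause \eqref{it:defn_bag_3} follows from the very same dichotomy, now fed into the internal weak gluing conditions: a composable pair over $x\in U$ lives in $U\times U$ and is absorbed into a single $U_i$ by the gluing condition for $(U_i)$, while a composable pair over $x\notin U$ lives in $\maH$ and is absorbed into a single $V_j$ by the gluing condition for $(V_j)$. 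This exhibits $\G$ as a boundary action groupoid and completes the proof.
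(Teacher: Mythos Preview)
Your proof is correct and follows essentially the same approach as the paper's. The only cosmetic difference is that the paper invokes the \emph{strong} gluing condition for the pair $\{\maH,\,U\times U\}$ (observing that the $\G$-orbit of any point is either $U$ or contained in $V\setminus U$) and then passes to the weak one via Lemma~\ref{defn:strong_gluing_condition}$\Rightarrow$\ref{defn:weak_gluing_condition}, whereas you verify the weak gluing condition directly via the same $x\in U$ / $x\notin U$ dichotomy; the underlying orbit analysis is identical, and the rest of the argument (refining the cover using Example~\ref{ex:bag_pair_groupoid} for $U\times U$ and the boundary action structure of $\maH$ for $V$, then feeding the dichotomy into the internal gluing conditions) matches the paper's proof.
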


Lemma \ref{lm:bag_attaching_ends} should be thought as a way of \enquote{attaching ends} to a pair groupoid, which will model the geometry at infinity.

\begin{proof}
  First, the definition of boundary action groupoids gives $\maH_{U\cap V} \simeq (U \cap V)^2$, so that $U\times U$ and $\maH$ are isomorphic over $U \cap V$. The pair $(\maH,U \times U)$ satisfies the strong gluing condition (the $\G$-orbit of any point in $M$ is either $U$ or contained in $V \setminus U$), so the gluing has a well-defined groupoid structure.

  It follows from the properties of the gluing (Lemma \ref{lm:groupoid_struct_on_the_gluing}) that $U$ is the unique open dense $\G$-orbit in $M$, and that $\G_U \simeq U\times U$. We know from Example \ref{ex:bag_pair_groupoid} that $U \times U$ is a boundary action groupoid. Therefore, there is an open cover $(U_i)_{i\in I}$ of $U$ and an open cover $(V_j)_{j \in J}$ of $V$ with isomorphisms
  \begin{equation} \label{eq:local_iso}
    (U\times U)_{U_i} \simeq (\R^n \rtimes\R^n)|_{U'_i} \qquad \text{and} \qquad \maH|_{V_j} \simeq (X_j \rtimes G_j)_{V'_j},
  \end{equation}
  and such that the respective families of reductions satisfy the weak gluing condition. Because $\G|_V \simeq \maH$ and $\G_U \simeq U\times U$, the isomorphisms of Equation \eqref{eq:local_iso} also hold for the reductions $\G|_{U_i}$ and $\G|_{V_j}$. Besides, any two composable arrows for $\G$ are either in $\G_U$ or in $\G|_V$, so the family $(\G|_{U_i})_{i\in I} \cup (\G|_{V_j})_{j \in J}$ also satisfies the weak gluing condition.
\end{proof}

\subsection{Examples}
\label{sub:examples}

We will show here that many groupoids occuring in the study of analysis on singular manifolds are boundary action groupoids. We will explain in Subsection \ref{sub:fredholm_groupoids} how this class of groupoids allows to obtain Fredholm conditions for many interesting differential operators. Our examples are based on the following result:

\begin{thm} \label{thm:lie_manifolds_maximal_integration}
  Let $M$ be a manifold with corners and $M_0 \eqdef M \setminus \d M$. Let $A \to M$ be a Lie algebroid, such that the anchor map $\rho$ induces an isomorphism $A|_{M_0} \simeq TM_0$ and $\rho(A|_{F}) \subset TF$ for any face $F$ of $M$. Then there is a unique Lie groupoid $\G \st M$ integrating $A$, such that $\G_{M_0} \simeq M_0 \times M_0$ and $\G_{\d M}$ is $d$-simply-connected.
\end{thm}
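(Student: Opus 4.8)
The plan is to reduce the whole statement to the two Lie theorems above, once integrability has been secured. Since $\rho$ restricts to an isomorphism $A|_{M_0}\simeq TM_0$ on the open dense set $M_0$, the anchor of $A$ is injective on a dense open subset, i.e. $A$ has an \emph{almost injective} anchor; such algebroids are integrable (the Crainic--Fernandes obstruction \cite{CF2011} vanishes). I would therefore invoke Theorem \ref{thm:Lie1} to produce the unique $d$-simply-connected integration $\widetilde{\G}\st M$ of $A$. The tangency hypothesis $\rho(A|_F)\subset TF$ guarantees that every face $F$, and in particular $\d M$, is $\widetilde\G$-invariant, so that $A$ restricts to a Lie algebroid on each face, $\widetilde\G_F$ integrates $A|_F$, and $\widetilde\G_F$ is again $d$-simply-connected (its $d$-fibers over $x\in F$ being the full fibers $\widetilde\G_x$, since the orbit of $x$ stays in $F$).

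For existence I would build $\G$ as a quotient of $\widetilde\G$. Over the interior $A|_{M_0}=TM_0$, so $\widetilde\G_{M_0}$ is the homotopy groupoid $\Pi(M_0)$ and the source--target map gives a canonical morphism $\Pi(M_0)\to M_0\times M_0$ with kernel the bundle of interior isotropy groups $\widetilde\G_x^x\cong\pi_1(M_0,x)$. I would define $N\subset\widetilde\G$ to be the wide subgroupoid with $N_x=\widetilde\G_x^x$ for $x\in M_0$ and $N_x=\{u(x)\}$ for $x\in\d M$, and set $\G\eqdef\widetilde\G/N$. Because $N$ is discrete ($\pi_1$ is $0$-dimensional in the interior, and $N$ is trivial on $\d M$), this quotient leaves the algebroid unchanged, so $\A\G\simeq A$; by construction $\G_{M_0}\simeq\Pi(M_0)/\pi_1(M_0)\simeq M_0\times M_0$ and $\G_{\d M}\simeq\widetilde\G_{\d M}$ is $d$-simply-connected. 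The genuinely delicate point — and the crux of the existence half — is to show that $N$ is a \emph{closed, embedded, normal} Lie subgroupoid, equivalently that $\G=\widetilde\G/N$ is a Hausdorff Lie groupoid: one must control how the interior $d$-fibers and their fundamental groups degenerate as one approaches $\d M$, ruling out interior loops whose holonomy survives in the boundary limit. I expect to obtain this from the local normal form of $A$ near $\d M$ afforded by the almost-injective-anchor structure, which trivialises the relevant holonomy transversally to each face; the tangency condition $\rho(A|_F)\subset TF$ is precisely what makes that local model available (and if $N$ is not closed one replaces it by its closure, which collapses the same interior isotropy while preserving the boundary reductions).

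For uniqueness, suppose $\G$ and $\G'$ both satisfy the conclusion. Both are $d$-connected: over $M_0$ the fibers are those of the pair groupoid, and over $\d M$ they are simply connected by hypothesis. Applying Theorem \ref{thm:Lie2} to $\mathrm{id}_A$ with source the $d$-simply-connected $\widetilde\G$, I obtain canonical morphisms $p:\widetilde\G\to\G$ and $p':\widetilde\G\to\G'$ integrating the identity; each covers $\mathrm{id}_M$ and induces an isomorphism on algebroids, hence is a surjective submersion with discrete kernel. Over $M_0$ both kernels equal the full interior isotropy $\pi_1(M_0)$, since the targets are there the pair groupoid; over $\d M$ both are trivial, because $\widetilde\G_{\d M}$, $\G_{\d M}$ and $\G'_{\d M}$ are all $d$-simply-connected integrations of $A|_{\d M}$ and so, by uniqueness in Theorem \ref{thm:Lie1}, the induced maps are isomorphisms. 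As $M=M_0\sqcup\d M$, the subsets $\ker p$ and $\ker p'$ of the fixed groupoid $\widetilde\G$ have identical fibers, whence $\ker p=\ker p'$ and therefore $\G\simeq\widetilde\G/\ker p=\widetilde\G/\ker p'\simeq\G'$. This simultaneously identifies $\ker p$ with the subgroupoid $N$ of the existence step and confirms that the object constructed there is the unique integration with the stated properties.
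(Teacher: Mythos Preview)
The paper's own proof is very brief: existence is entirely deferred to Debord \cite{Deb2001} and Nistor \cite{Nis2000}, and uniqueness is obtained by first observing that $\G_{\d M}$ and $\maH_{\d M}$ are $d$-simply-connected integrations of the same algebroid $A|_{\d M}$ (hence isomorphic), and then invoking the main gluing result of \cite{Nis2000} to conclude that $\G\simeq\maH$.

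Your uniqueness argument is correct and in fact more self-contained than the paper's. Instead of citing the gluing theorem of \cite{Nis2000}, you produce the covering maps $p,p':\widetilde\G\to\G,\G'$ from Theorem \ref{thm:Lie2} and compare their kernels fibre by fibre. This works cleanly because you correctly observe that both targets are $d$-connected (pair-groupoid fibres over $M_0$, simply-connected fibres over $\d M$), so the maps are surjective local diffeomorphisms, and the kernel is forced on each piece of $M=M_0\sqcup\d M$. That is a genuine improvement in transparency over the paper's appeal to an external result.

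Your existence argument, however, has a real gap, and your own hedge does not close it. The issue is exactly the one you identify: showing that the wide subgroupoid $N\subset\widetilde\G$ (interior isotropy over $M_0$, units over $\d M$) is \emph{closed}. Your fallback --- replace $N$ by its closure --- is not safe: the closure of the interior isotropy bundle may acquire non-trivial elements over $\d M$, and quotienting by those would destroy the $d$-simply-connectedness of $\G_{\d M}$ that you need. The vague appeal to a ``local normal form'' is not a proof; this is precisely the delicate point that \cite{Nis2000} addresses, not by quotienting $\widetilde\G$, but by \emph{gluing} the pair groupoid $M_0\times M_0$ to the $d$-simply-connected integration of $A|_{\d M}$ along a collar, where one has explicit control of the topology. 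So for existence you should either cite \cite{Deb2001,Nis2000} as the paper does, or reproduce that gluing construction; the quotient route as written does not go through.
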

\begin{proof}
  The existence of such a groupoid has been proven by Debord \cite{Deb2001} and Nistor \cite{Nis2000}. If $\maH$ is another groupoid satisfying the assumptions of Theorem \ref{thm:lie_manifolds_maximal_integration}, then $\maH_{\d M}$ and $\G_{\d M}$ are $d$-simply-connected integrations of $A|_{\d M}$, so Theorem \ref{thm:Lie2} states that they are isomorphic. The main result in \cite{Nis2000} implies that $\G$ is then isomorphic to $\maH$.
\end{proof}

The groupoid $\G$ in Theorem \ref{thm:lie_manifolds_maximal_integration} will be called the \emph{maximal integration} of $A$. Based on this theorem, we give several examples of boundary action groupoids which occur naturally in the context of analysis on open manifolds : see \cite{CNQ2017} for more details.

\begin{ex}[Zero-groupoid] \label{ex:lie_struct_0}
  Consider $G_n \eqdef (0,\infty) \ltimes\R^{n-1}$, where $(0,\infty)$ acts by dilation on $\R^{n-1}$. The right action of $G_n$ upon itself extends uniquely to an action on $X_n \eqdef [0,\infty) \times \R^{n-1}$, by setting
  \begin{equation*}
    (x_1,\ldots,x_n) \cdot (t,\xi_2,\ldots,\xi_n) = (tx_1 , x_2 + x_1 \xi_2, \ldots, x_n + x_1 \xi_n).
  \end{equation*}
  The Lie algebra of fundamental vector fields for this action (recall Example \ref{ex:action_groupoid}) is the one spanned by $(x_1\d_1,\ldots,x_1\d_n)$ on $X_n$.

  To generalize this setting, let $M$ be a manifold with boundary and let $\V_0$ be the Lie algebra of all vector fields on $M$ vanishing on $\d M$. In a local coordinate system $[0,\infty)\times\R^{n-1}$ near $\d M$, we have
  \begin{equation*}
    \V_0 = \mathrm{Span}(x_1\d_1,\ldots,x_1\d_n),
  \end{equation*}
  as a $C^\infty(M)$-module.

  It follows from Serre-Swan's Theorem that there is a unique Lie algebroid $A_{0} \to M$ such that the anchor map induces an isomorphism $\Gamma(A_{0}) \simeq \V_{0}$. The \emph{zero-groupoid} $\G_{0} \dr M$ is the maximal integration of $A_{0}$, as given by Theorem \ref{thm:lie_manifolds_maximal_integration}: it is the natural space for the Schwarz kernels of differential operators that are induced by \emph{asymptotically hyperbolic} metrics on $M_0$ \cite{Mel1995}.

  \begin{thm} \label{thm:lie_struct_0_bag}
  The $0$-groupoid $\G_{0} \dr M$ is a boundary action groupoid. Moreover, for each $p \in \d M$, there is a neighborhood $U$ of $p$ in $M$, and an open set $V \subset \R_+\times\R^{n-1}$, such that
  \begin{equation*}
    \G_{0}|_U \simeq (X_n \rtimes G_n)|_V.
  \end{equation*}
\end{thm}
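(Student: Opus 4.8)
The plan is to verify the three conditions of Definition \ref{defn:boundary_action_groupoids}, the decisive ingredient being the local structure near the boundary (the ``Moreover'' assertion), from which conditions \eqref{it:defn_bag_2} and \eqref{it:defn_bag_3} will follow. Condition \eqref{it:defn_bag_1} is immediate: since $\G_{0}$ is the maximal integration of $A_{0}$ (Theorem \ref{thm:lie_manifolds_maximal_integration}), we have $\G_{0}|_{M_0} \simeq M_0 \times M_0$ with $M_0 = M \setminus \d M$ open and dense, so we may take $U = M_0$.

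First I would establish the local isomorphism at the level of Lie algebroids. Fix $p \in \d M$ and a coordinate chart $U$ around $p$, modelled on an open set $V \subset X_n = [0,\infty) \times \R^{n-1}$, in which $\V_0 = \mathrm{Span}(x_1\d_1,\ldots,x_1\d_n)$ as a $C^\infty$-module. A direct computation shows that the fundamental vector fields of the $G_n$-action on $X_n$ are precisely $x_1\d_1,\ldots,x_1\d_n$: differentiating the action formula in the dilation direction gives $x_1\d_1$, and in the $j$-th translation direction gives $x_1\d_j$. One checks that the Lie brackets match as well, namely $[x_1\d_1, x_1\d_j] = x_1\d_j$ for $j \ge 2$ and $[x_1\d_i, x_1\d_j] = 0$ for $i,j \ge 2$, which are exactly the structure relations of $\g_n$. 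Hence the anchor identifies $A_0|_U$ with the action algebroid $(X_n \rtimes \g_n)|_V$, the Lie algebroid of $(X_n \rtimes G_n)|_V$ (Example \ref{ex:action_groupoid}).

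Next I would upgrade this to an isomorphism of groupoids using the uniqueness of the maximal integration, by showing that both $\G_{0}|_U$ and $(X_n \rtimes G_n)|_V$ are maximal integrations of this common algebroid. Over the interior both restrict to pair groupoids: $\G_{0}|_{U \cap M_0} \simeq (U\cap M_0)^2$ by maximality of $\G_{0}$, while the interior $V_0 = V \cap \{x_1 > 0\}$ sits inside a single orbit of $X_n$ on which $G_n$ acts freely, so $(X_n \rtimes G_n)|_{V_0} \simeq V_0^2$. The subtle point, and the main obstacle, is the behaviour over the boundary: one must ensure that reduction to $U$ (resp.\ $V$) does not truncate the relevant $d$-fibers and thereby destroy $d$-simple-connectedness. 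Here one uses that the boundary is fixed pointwise by the $G_n$-action and, equivalently, that $\rho(A_0)$ vanishes on $\d M$, so that each boundary point is a \emph{singleton} orbit of both groupoids. Consequently every arrow with boundary source has boundary range equal to that same point, whence $d^{-1}(p)$ coincides with the isotropy group $(\G_{0})_p^p$ and is entirely contained in $r^{-1}(U)$; the $d$-fiber is therefore \emph{not} cut by the reduction. These isotropy groups are the $d$-simply-connected integrations of $\g_n$, namely $G_n \cong (0,\infty)\times\R^{n-1}$, which is simply connected. Thus both $\G_{0}|_U$ and $(X_n\rtimes G_n)|_V$ satisfy the defining properties of the maximal integration, and Theorem \ref{thm:lie_manifolds_maximal_integration} yields $\G_{0}|_U \simeq (X_n \rtimes G_n)|_V$, proving the ``Moreover'' statement.

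Finally I would assemble the boundary action groupoid structure. Around interior points, the same scheme (using the translation action of $\R^n$ on itself, as in Example \ref{ex:bag_pair_groupoid}) gives charts with $\G_{0}|_U \simeq (\R^n \rtimes \R^n)|_{U'}$; together with the boundary charts above this produces an open cover realizing condition \eqref{it:defn_bag_2}. For the weak gluing condition \eqref{it:defn_bag_3}, let $(g,h)$ be composable with $x = d(g) = r(h)$. If $x \in M_0$, then $g$ and $h$ connect points of $M_0$ and lie in a single chart taken to be a disjoint union of coordinate balls around the (at most three) points involved, exactly as in Example \ref{ex:bag_pair_groupoid}. If $x \in \d M$, then since the orbit of $x$ is $\{x\}$ both $g$ and $h$ lie in the isotropy $(\G_{0})_x^x$, hence in any boundary chart containing $x$. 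This verifies \eqref{it:defn_bag_3} and completes the proof that $\G_{0}$ is a boundary action groupoid.
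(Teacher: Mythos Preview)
Your proof is correct and follows essentially the same route as the paper: match the algebroids in local boundary coordinates and then invoke the uniqueness of the maximal integration (Theorem \ref{thm:lie_manifolds_maximal_integration}) to obtain the local groupoid isomorphism, finally assembling charts to verify Definition \ref{defn:boundary_action_groupoids}. You are in fact more explicit than the paper on two points it passes over quickly---checking that reduction to $U$ preserves the maximal-integration hypotheses (via the singleton boundary orbits), and covering the interior by the charts of Example \ref{ex:bag_pair_groupoid} rather than by the single set $U_0=M_0$.
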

\begin{proof}
  For each $p \in \d M$, there is a neighborhood $U$ of $p$ in $M$ that is diffeomorphic to an open subset $V \subset \R_+\times\R^{n-1}$, through $\phi : U \to V$. The diffeomorphism $\phi$ maps $\d U$ to $\d V$, so $\phi_*(\V_0(U)) = \V_0(V)$. This implies that there is an isomorphism $A_{0}(U) \simeq A_{0}(V)$ covering $\phi$. Both groupoids $\G_{0}|_U$ and $(X_n \rtimes G_n)|_V$ are maximal integrations of $A_{0}(U) \simeq A_{0}(V)$, so Theorem \ref{thm:lie_manifolds_maximal_integration} implies that $\G_{0}|_U \simeq (X_n \rtimes G_n)|_V$.

  To prove that $\G_{0}$ is a boundary action groupoid, let $(U_i)_{i=1}^n$ be an open cover of $\d M$, such that each $\G_{0}|_{U_i}$ is isomorphic to a reduction of $X_n \rtimes G_n$, for all $i = 1,\ldots,n$. Let $U_0 = M_0$. Then $(U_i)_{i=0}^n$ is an orbit cover of $M$ that satisfies the assumptions of Definition \ref{defn:boundary_action_groupoids}.
\end{proof}
\end{ex}

\begin{ex} \label{ex:lie_struct_alpha}
  Example \ref{ex:lie_struct_0} can be slightly generalized by replacing $\V_0$ with a Lie algebra $\V\subset \Gamma(TM)$ such that, for any point $p \in \d M$, there is a $n$-tuple $\alpha \in \N^n$ and a local coordinate system $[0,\infty)\times\R^{n-1}$ near $p$ with
  \begin{equation*}
    \V = \mathrm{Span}(x_1^{\alpha_1}\d_1,\ldots,x_1^{\alpha_n}\d_n).
  \end{equation*}
  If $\alpha_1 = 1$ and every $\alpha_i \ge 1$, for $i =2,\ldots,n$, then the maximal integration $\G \st M$ of $\V$ is again a boundary action groupoid. Indeed, consider the action of $(0,\infty)$ on $\R^{n-1}$ given by
  \begin{equation*}
    t\cdot (x_2,\ldots,x_n) = (t^{\alpha_2}x_2,\ldots,t^{\alpha_n}x_n),
  \end{equation*}
  and form the semidirect product $G_\alpha = (0,\infty) \ltimes_\alpha \R^{n-1}$ given by this action. As in Example \ref{ex:lie_struct_0}, the right action of $G_\alpha$ upon itself extends uniquely to an action on $X_n \eqdef [0,\infty) \times \R^{n-1}$, by setting
  \begin{equation*}
    (x_1,\ldots,x_n) \cdot (t,\xi_2,\ldots,\xi_n) = (tx_1 , x_2 + x_1^{\alpha_2} \xi_2, \ldots, x_n + x_1^{\alpha_n} \xi_n).
  \end{equation*}
  An argument analogous to that of Theorem \ref{thm:lie_struct_0_bag} shows that $\G$ is obtained by gluing reductions of actions groupoids $X_n \rtimes G_\alpha$, for some $n$-tuples $\alpha \in \N^n$.
\end{ex}

\begin{ex}[Scattering groupoid] \label{ex:lie_struct_sc}
  Let $\S^n_+$ be the stereographic compactification of $\R^n$. Consider the action of $\R^n$ upon itself by translation, and extend it to $\S^n_+$ in the only possible way, by a trivial action on $\d \S^n_+$. The action groupoid $\G_{sc} = \S^n_+ \rtimes \R^n$ has been much studied in the literature, and is related to the study of the spectrum of the $N$-body problem in Euclidean space \cite{GI2006,MNP2017}.

  As in Example \ref{ex:lie_struct_sc}, we can generalize this setting to any manifold with boundary $M$. Let $\V_b$ be the Lie algebra of vector fields on $M$ which are tangent to the boundary, and let $x \in C^\infty(M)$ be a defining function for $\d M$. We define the Lie algebra of \emph{scattering vector fields} on $M$ as $\V_{sc} \eqdef x \V_b$. In a local coordinate system $[0,\infty) \times \R^{n-1}$ near $\d M$, we have
  \begin{equation*}
    \V_{sc} = \mathrm{Span}(x_1^2\d_1,x_1\d_2,\ldots,x_1\d_n),
  \end{equation*}
  as a $C^\infty(M)$-module. One can check that, when $M = \S^n_+$ as above, then $\V_{sc}$ is the Lie algebra of fundamental vector fields induced by the action of $\R^n$ on $\S^n_+$.

  As in Example \ref{ex:lie_struct_0}, there is a unique Lie algebroid $A_{sc} \to M$ whose sections are isomorphic to $\V_{sc}$ through the anchor map. The \emph{scattering groupoid} $\G_{sc} \dr M$ is the maximal integration of $A_{sc}$, and it generates the algebra of differential operators on $M_0$ that are induced by \emph{asymptotically Euclidean} metrics \cite{Mel1995}. The proof of Theorem \ref{thm:lie_struct_0_bag} can be adapted to this context to give:

\begin{thm}
  The scattering groupoid $\G_{sc} \dr M$ is a boundary action groupoid. Moreover, for each $p \in \d M$, there is a neighborhood $U$ of $p$ in $M$, and an open set $V \subset \S^n_+$, such that
  \begin{equation*}
    \G_{sc}|_U \simeq (\S^n_+ \rtimes \R^n)|_V.
  \end{equation*}
\end{thm}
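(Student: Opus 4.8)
The plan is to follow the proof of Theorem \ref{thm:lie_struct_0_bag} almost verbatim, the only new ingredients being the identification of the model groupoid $\S^n_+\rtimes\R^n$ and a single structural fact: that the scattering anchor annihilates $T(\d M)$. First I would record that $\V_{sc} = x\V_b$ is \emph{intrinsic} to the boundary structure of $M$. Indeed, any two defining functions differ by a positive smooth factor, and multiplication by such a factor is an automorphism of the $C^\infty(M)$-module $\V_b$, so $x\V_b$ is independent of the choice of $x$. Consequently any boundary-preserving diffeomorphism $\phi$ intertwines the scattering structures, since it preserves $\V_b$ and carries defining functions to defining functions.

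Given $p \in \d M$, I would then choose a diffeomorphism $\phi : U \to V$ from a neighborhood $U$ of $p$ onto an open set $V \subset \S^n_+$ meeting $\d\S^n_+$, mapping $\d U$ onto $\d V$. By the previous paragraph $\phi_*(\V_{sc}(U)) = \V_{sc}(V)$, which upgrades to a Lie algebroid isomorphism $A_{sc}|_U \simeq A_{sc}|_V$ covering $\phi$; here one uses that, on $\S^n_+$, the fundamental vector fields of the translation action are exactly $\V_{sc}$, as recorded in the example.

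The key step is to recognize $\S^n_+\rtimes\R^n$ as the maximal integration of $A_{sc}$ over $\S^n_+$ and to check that this property descends to the relevant reductions. For the model: on the interior $\R^n$ the translation action is transitive, so the reduction to the interior is the pair groupoid $\R^n\times\R^n$, while on $\d\S^n_+$ the action is trivial, so every $d$-fiber over a boundary point is a copy of the simply-connected group $\R^n$; thus $\S^n_+\rtimes\R^n$ satisfies the hypotheses of Theorem \ref{thm:lie_manifolds_maximal_integration}. The same theorem shows that $\G_{sc}|_U$ and $(\S^n_+\rtimes\R^n)|_V$ are each maximal integrations of the isomorphic algebroids $A_{sc}|_U \simeq A_{sc}|_V$: since $\rho(A_{sc}|_{\d M}) = 0$, the boundary points are fixed, so for $x \in \d M \cap U$ one has $r(g) = x$ whenever $d(g) = x$, whence the $d$-fiber of $\G_{sc}|_U$ over $x$ is the full isotropy $(\G_{sc})_x^x$, which remains $d$-simply-connected; over $U \cap M_0$ the reduction is automatically a pair groupoid. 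Transporting through $\phi$ and invoking uniqueness of the maximal integration then yields $\G_{sc}|_U \simeq (\S^n_+\rtimes\R^n)|_V$.

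Finally I would assemble the boundary action groupoid structure as in Theorem \ref{thm:lie_struct_0_bag}: take a finite cover $(U_i)_{i=1}^k$ of $\d M$ by neighborhoods of the above type, together with $U_0 = M_0$. Condition \eqref{it:defn_bag_1} of Definition \ref{defn:boundary_action_groupoids} holds with dense orbit $M_0$ and $(\G_{sc})_{M_0} \simeq M_0\times M_0$; condition \eqref{it:defn_bag_2} is the local isomorphism just produced; and condition \eqref{it:defn_bag_3} holds because the family even satisfies the \emph{strong} gluing condition of Definition \ref{defn:strong_gluing_condition}, the $\G_{sc}$-orbit of any point being either all of $M_0$ or a single fixed boundary point, hence contained in one chart. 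I expect the main obstacle to be the bookkeeping in the key step, namely making the uniqueness clause of Theorem \ref{thm:lie_manifolds_maximal_integration} apply on the open sets $U$ and $V$: this is precisely where one must use that the scattering anchor kills $T(\d M)$, forcing the $d$-fibers of the reductions over boundary points to coincide with the simply-connected isotropy groups.
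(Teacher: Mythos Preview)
Your proposal is correct and follows exactly the approach the paper intends: the paper gives no separate proof here, merely stating that the argument for Theorem \ref{thm:lie_struct_0_bag} adapts, and you have carried out that adaptation faithfully. If anything, you supply more detail than the paper does in the zero case---in particular your verification that the reductions $\G_{sc}|_U$ and $(\S^n_+\rtimes\R^n)|_V$ themselves satisfy the hypotheses of Theorem \ref{thm:lie_manifolds_maximal_integration} (using that $\rho(A_{sc}|_{\d M})=0$ forces boundary $d$-fibers to coincide with the simply-connected isotropy groups), and your observation that the strong gluing condition holds, are points the paper leaves implicit.
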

\end{ex}

We introduce and study in Section \ref{s.LP_groupoids} another example of boundary action groupoid, used to model layer potentials methods on conical domains.

\subsection{Fredholm conditions}
\label{sub:fredholm_groupoids}

Let $\G \dr M$ be a boundary action groupoid, with $U$ its unique dense $\G$-orbit. Our aim in this Subsection is to obtain some conditions under which $\G$ is a Fredholm groupoid, as introduced in Subsection \ref{ss.fredholm}.

Recall that the algebra of pseudodifferential operators $\Psi^\infty(\G)$ was introduced in Subsection \ref{ss.ops.grpds}, together with the closure $L^m_s(\G)$ of the space of order-$m$ pseudodifferential operators in $\B(H^s(U),H^{s-m}(U))$.

\begin{thm} \label{thm:boundary_action_Fredholm}
  Let $\G \st M$ be a boundary action groupoid, and $U \subset M$ its unique dense orbit. Assume that the action of $\G$ on $F \eqdef M \setminus U$ is trivial, and that for all $x \in \d M$, the group $\G_x^x$ is amenable. Let $P$ be an operator in $L^m_s(\G)$. Then for all $s \in \R$, the operator $P : H^s(U) \to H^{s-m}(U)$ is Fredholm if, and only if:
  \begin{enumerate}
    \item $P$ is elliptic, and
    \item $P_x : H^s(\G^x_x) \to H^{s-m}(\G_x^x)$ is invertible for all $x \in F $.
  \end{enumerate}
\end{thm}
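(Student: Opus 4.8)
The plan is to deduce the statement from the general Fredholm criterion of Theorem \ref{thm.nonclassical2}, which applies to any Fredholm Lie groupoid; hence the entire content of the proof is to verify that, under the stated hypotheses, the boundary action groupoid $\G$ is in fact a Fredholm groupoid in the sense of Subsection \ref{ss.fredholm}. Two of the required ingredients are immediate: $\G$ is Hausdorff by Lemma \ref{lm:boundary_action_Hausdorff}, and $\G_U \simeq U \times U$ for the open dense orbit $U$ by condition \eqref{it:defn_bag_1} of Definition \ref{defn:boundary_action_groupoids}. It therefore suffices, by Proposition \ref{cor.Fredholm.Cond}, to prove that $\maR(\G_F) = \{\pi_x : x \in F\}$ is an exhaustive family of representations of the full algebra $C^*(\G_F)$, where $F = M \setminus U$.

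First I would determine the structure of the reduction $\G_F$. Since the action of $\G$ on $F$ is trivial, any arrow $g$ with $d(g) \in F$ satisfies $r(g) = d(g)$, so that $d = r$ on $\G_F$; thus $\G_F = \G|_F$ is a bundle of groups whose fibre over $x$ is the isotropy group $\G_x^x$, and in particular $\G_x = \G_x^x$ for $x \in F$. To see that this bundle is locally trivial with amenable fibres, I would pass to a local chart where $\G|_{U_i} \simeq (X_i \rtimes G_i)|_{U_i'}$, as provided by condition \eqref{it:defn_bag_2}. Over the dense set $U$ the groupoid is the pair groupoid, so the $G_i$-orbits inside $U \cap U_i'$ are open and the action there is free; hence $\dim G_i = \dim X_i$. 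For a boundary point $x' \in F \cap U_i'$ the orbit is a single point, so by the orbit--stabilizer relation the stabilizer $\G_x^x \simeq \mathrm{Stab}_{G_i}(x')$ has full dimension in $G_i$, whence it agrees with $G_i$ (at least with its identity component, which is all that is relevant for the $C^*$-algebra). Consequently $\G_F|_{U_i} \simeq (F \cap U_i) \times G_i$ is a trivial bundle of the group $G_i$, and $G_i \simeq \G_x^x$ is amenable by the standing hypothesis on the isotropy over $\d M$.

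The gluing isomorphisms $\phi_{ij}$ restrict to isomorphisms of these local group bundles, so they patch together to exhibit $\G_F$ as a locally trivial bundle $\maH \to F$ of amenable Lie groups, equivalently as the fibered pullback $f^{\pbg}(\maH)$ with $f = \mathrm{id}_F$. By Example \ref{ex.Exel} such a groupoid has Exel's property, and its regular representations form an exhaustive family for the full algebra $C^*(\G_F)$ precisely because the fibre groups are amenable. Proposition \ref{cor.Fredholm.Cond} (equivalently Corollary \ref{prop.fred}) then yields that $\G$ is a Fredholm groupoid. Applying Theorem \ref{thm.nonclassical2} and using the identification $\G_x = \G_x^x$ for $x \in F$, we conclude that $P \in L^m_s(\G)$ is Fredholm if and only if it is elliptic and each limit operator $P_x = \pi_x(P)$ is invertible on $H^s(\G_x^x)$ for every $x \in F$, which is exactly the assertion.

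The main obstacle is the middle step: showing that the isotropy bundle $\G_F$ is locally trivial with the full acting groups $G_i$ as fibres. The subtle point is that triviality of the \emph{$\G$-action} on $F$ only forces the $G_i$-orbits inside each chart to reduce to points, and not a priori the stabilizers to fill out $G_i$; it is the orbit--stabilizer dimension count, combined with the freeness of the action on the dense orbit $U$, that closes this gap. This is exactly the behaviour one observes explicitly in the scattering and zero-groupoid examples of Subsection \ref{sub:examples}, where the acting group collapses to fixed points along $\d M$.
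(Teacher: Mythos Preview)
Your proposal is correct and follows essentially the same route as the paper: show that $\G_F$ is a locally trivial bundle of amenable Lie groups, invoke Corollary~\ref{prop.fred} to conclude that $\G$ is Fredholm, and then apply Theorem~\ref{thm.nonclassical2}. The paper asserts the local triviality $\G|_{F_i}\simeq F_i\times G_i$ directly from the triviality of the action on $F$, while you supply the orbit--stabilizer dimension count to justify it; this extra care is warranted (and your hedge about the identity component is harmless, since what is actually needed is only that the fibres are open subgroups of $G_i$, hence amenable, which your argument does establish).
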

Under the assumptions of Theorem \ref{thm:boundary_action_Fredholm}, the characterization of Fredholm operators in $L^m_s(\G)$ reduces to the study of right-invariant operators $P_x$ on the amenable groups $\G_x^x$, for $x \in M \setminus U$.
It should be emphasized that, if $P$ is a geometric operator (Dirac, Laplacian\ldots) for a metric on $U$ which is \enquote{compatible} with $\G$ (in a sense made precise in \cite{LN}), then each $P_x$ is an operator of the same type induced by a right-invariant metric on the amenable group $\G_x^x$.
Theorem \ref{thm:boundary_action_Fredholm} extends straightforwardly to pseudodifferential operators acting between sections of vector bundles. 

\begin{proof}[Proof of Theorem \ref{thm:boundary_action_Fredholm}]
  First, according to Lemma \ref{lm:boundary_action_Hausdorff}, the groupoid $\G$ is Hausdorff. Let $(U_i)_{i\in I}$ be an open cover satisfying the conditions of Definition \ref{defn:boundary_action_groupoids}, and let $F_i = U_i \cap F$. Because the family $(\G|_{U_i})_{i \in I}$ satisfies the weak gluing condition over $M$ and $F$ is $\G$-invariant, the family $(\G|_{F_i})_{i \in I}$ also satisfies the weak gluing condition over $F$. In other words, the groupoid $\G_F$ is the gluing of the family $(\G|_{F_i})_{i\in I}$. Moreover, the action of $\G$ on $F$ is trivial, so each $\G|_{F_i}$ is isomorphic to $F_i\times G_i$, for every $i \in I$.

  These local trivializations show that $\G_{F}$ is a Lie group bundle over each connected component of $F$. Each $G_i$ is amenable, for $i \in I$, so we can conclude from Corollary \ref{prop.fred} that $\G$ is a Fredholm groupoid. Theorem \ref{thm:boundary_action_Fredholm} is then a consequence of Theorem \ref{thm.nonclassical2}.
\end{proof}

\begin{ex}
  The scattering groupoid $\G_{sc}$ of Example \ref{ex:lie_struct_sc} satisfies the assumptions of Theorem \ref{thm:boundary_action_Fredholm}. When $P \in \Psi^m(\G_{sc})$, the limit operators $(P_x)_{x \in \d M}$ are translation-invariant operators on $\R^n$. In that case, the operator $P_x$ is simply a Fourier multiplier on $C^\infty(\R^n)$, whose invertibility is easy to study: see \cite{CN2014}.
\end{ex}

\begin{ex}
  The $0$-groupoid $\G_0$ of Example \ref{ex:lie_struct_0}, which models asymptotically hyperbolic geometries, also satisfies the assumptions of Theorem \ref{thm:boundary_action_Fredholm}. If $P \in \Psi^m(\G_0)$, the limit operators $P_x$ are order-$m$, right-invariant pseudodifferential operators on the noncommutative groups $G_n = (0,\infty) \ltimes \R^{n-1}$.
\end{ex}

\begin{rem}
  We will show in a subsequent paper \cite{Com2018b} that a result similar to Theorem \ref{thm:boundary_action_Fredholm} holds without the assumption of a trivial action of $\G$ on $\d M$ (the proof requires a more involved study of the representations of $\G$). Thus all boundary action groupoids that are obtained by gluing actions by amenable groups are Fredholm groupoids. We believe the converse not to be true, although we are unable to provide any example of a Lie groupoid (with an open, dense orbit $U$ on which $\G_U \simeq U \times U$) that is not also a boundary action groupoid.
\end{rem}

\vspace{0.3cm}
\section{Layer Potentials Groupoids}\label{s.LP_groupoids}

In this section, we   review the construction of layer potentials groupoids for conical domains in \cite{CQ13}. In order to study layer potentials operators, which are operators on the boundary, we consider a groupoid over the desingularized boundary. Our aim is to relate this groupoid with the boundary action groupoids defined in the previous section in an explicit way, which we shall do in Section \ref{s.FredCond}.

\subsection{Conical domains and desingularization}
We begin with the definition of domains with conical points \cite{BMNZ, CQ13, MN10}.

\begin{definition}\label{domain}
Let $\Omega \subset \RR^n$, $n\geqslant 2$, be an open connected
bounded domain. We say that $\Omega$ is a \emph{domain with conical
  points} if there exists a finite number of points $\{p_1, p_2,
\cdots, p_l\} \subset \partial \Omega$, such that
\begin{enumerate}
\item[(1)] $\partial \Omega \backslash \{p_1, p_2, \cdots, p_l\}$ is smooth;
\item[(2)] for each point $p_i$, there exist a neighborhood $V_{p_i}$
  of ${p_i}$, a possibly disconnected domain $\omega_{p_i} \subset
  S^{n-1}$, $\omega_{p_i} \neq S^{n-1}$, with smooth boundary, and a
  diffeomorphism $\phi_{p_i}: V_{p_i} \rightarrow B^{n}$ such that
$$\phi_{p_i}(\Omega \cap V_{p_i})=\{rx': 0<r< 1, x'\in \omega_{p_i}\}.$$
(We assume always that $\overline{V_i}\cap \overline{V_j}=\emptyset$, for $i\neq j$, $i,j\in\{1,2,\cdots, l\}$.)
\end{enumerate}
If $\pa\Omega= \pa\overline{\Omega}$, then we say that
$\Omega$ is a \emph{domain with no cracks}. The points $p_i$, $i= 1,
\cdots, l$ are called \emph{conical points} or \emph{vertices}. If
$n=2$, $\Omega$ is said to be a {\em polygonal domain}.
\end{definition}

We shall distinguish two cases: \emph{conical domains without cracks}, $n\in\mathbb{N}$, and
\emph{polygonal domains with ramified cracks}. (Note that if $n\geq 3$ then domains with cracks have edges, and are no longer conical.)

\vspace{0.2cm}

For simplicity, we assume $\Omega$ to be a subset of $\RR^n$.

\vspace{0.1cm}

In applications to boundary value problems in $\Omega$, it is often useful to regard smooth boundary points as artificial
vertices, representing for instance a change in boundary conditions. Then a conical point $x$ is a smooth boundary point
if, and only if, $\omega_x\cong S_+^{n-1}$.
The minimal set of conical points is unique and
coincides with the singularities of $\pa \Omega$; these are \emph{true conical points} of $\Omega$. Here we will give our results for true vertices, but the constructions can easily be extended to artificial ones.

  For the remainder of the paper, we keep the notation as in Definition \ref{domain}. Moreover,
for a conical domain $\Omega$, we always denote by $$\Omega^{(0)}=\{ p_1, p_2, \cdots, p_l\},$$
the set of true conical points of $\Omega$, and by $\Omega_0$ be the smooth part of $\pa\Omega$,
i.e., $\Omega_0=\pa\Omega \backslash \{p_1,p_2,\cdots, p_l\}$.
We remark that we allow the bases $\omega_{p_{i}}$ and $\pa \omega_{p_{i}}$ to be disconnected (in fact, if $n=2$, $\pa \omega_{p_{i}}$ is always disconnected).

\vspace{0.2cm}

We now recall the definition  of the \emph{desingularization} $\Sigma(\Omega)$ of $\Omega$ of a conical domain without cracks, which is obtained from $\Omega$ by removing a,
possibly non-connected, neighborhood of the singular points and
replacing each connected component by a cylinder.
 We refer to  \cite{BMNZ} for details on this construction, see also \cite{CQ13, Kon, MelroseAPS}.

 \smallskip

We have  the following
\begin{equation*}\label{desing}
   \Sigma(\Omega) \cong \left(\bigsqcup\limits_{p_i \in
     \Omega^{(0)}}[0,1)\times \overline{\omega_{p_i}}
     \right) \;  \bigcup\limits_{\phi_{p_i} }\;
     \Omega,
\end{equation*}
where the two sets are glued by $\phi_i$ along a suitable neighborhood of $p_{i}$.

In the terminology of \cite{BMNZ}, the hyperfaces which are
 {\em not at infinity} correspond to actual faces of $\Omega$, denoted by $\pa'\Sigma(\Omega)$, that is,
\begin{equation}\label{bdry_notinfty}
 \pa'\Sigma(\Omega)  \cong \left(\bigsqcup\limits_{p_i \in
    \Omega^{(0)}}[0,1)\times \partial\omega_{p_i}  \right)
    \bigcup\limits_{\phi_{p_i}, \, p_i\in\Omega^{(0)}} \Omega_0.
\end{equation}
A hyperface {\em at infinity} corresponds to a singularity of $\Omega$.
Let $\pa^{''}\Sigma(\Omega)$ denote the union of hyperfaces at infinity.
Hence
\begin{equation}\label{bdry_infty}
  \pa'' \Sigma(\Omega) \cong \bigsqcup\limits_{p_i \in
    \Omega^{(0)}}
    \{0\}\times \overline{\omega_{p_i} }.
\end{equation}

The boundary $\pa\Sigma(\Omega)$ can be identified with the union of
$\pa'\Sigma(\Omega)$ and $\pa^{''}\Sigma(\Omega)$. Let $\Omega_0$ denote the smooth part of $\partial\Omega$, that is,
$\Omega_0:=\partial \Omega \backslash \Omega^{(0)}$.
Hence, we can write
\begin{eqnarray}\label{bdry.desing}
  \pa \Sigma(\Omega) &=&  \pa' \Sigma(\Omega) \cup  \pa'' \Sigma(\Omega) \notag \\
  &\cong& \left(\bigsqcup\limits_{p_i \in
    \Omega^{(0)}}[0,1)\times \partial\omega_{p_i} \cup
    \{0\}\times \overline{\omega_{p_i} } \right)
    \bigcup\limits_{\phi_{p_i}, \, p_i\in\Omega^{(0)}} \Omega_0.
\end{eqnarray}

We denote by $M:= \pa'\Sigma(\Omega)$.
Note that $M$ coincides with
the closure of $\Omega_0$ in $\Sigma(\Omega)$. It is a compact manifold with (smooth) boundary
 $$\pa M= \bigsqcup\limits_{p_i \in
    \Omega^{(0)}}\{0\} \times \partial\omega_{p_i}.$$
In fact, we regard $M:=\partial' \Sigma
(\Omega)$ as a desingularization of the boundary $\pa \Omega$. Operators on $M$ will be related to (weighted) operators on $\pa \Omega$, as we shall see in Subsection \ref{s.FredCond}.
See  \cite{BMNZ, CQ13} for more details.

\smallskip
\subsection{Groupoid construction for conical domains without cracks}\label{ss.groupoidnocrack}

Let $\Omega$ be a conical domain without cracks, $\Omega^{(0)}=\{ p_1, p_2, \cdots, p_l\}$
be the set of (true) conical points of $\Omega$, and $\Omega_0$ be the smooth part of $\pa\Omega$. We will review the definition of the \emph{layer potentials groupoid} $\maG \tto M$, with $M:=\pa'\Sigma(\Omega) $  a compact set, as in the previous subsection, following \cite{CQ13}.

\smallskip

Let $\maH:=[0,\infty) \rtimes (0,\infty) $ be the transformation groupoid
with the action of $(0,\infty)$ on $[0,\infty)$ by dilation (see Example \ref{ex:action_groupoid}).
To each $p_i \in \Omega^{(0)}$,
we first associate a groupoid $\maH \times (\pa\omega_{p_i})^2 \tto [0,\infty) \times \pa\omega_{p_i}  $,  where  $(\pa\omega_{p_i})^2$ is  the pair groupoid of $\pa\omega_{p_i}$ (see Example \ref{ex:pair_groupoid}).
 We then take its reduction to $[0,1) \times \pa\omega_{p_i}$ to define
 \begin{equation*}
   \maJ_{i}:= \left.\left( \maH \times (\pa\omega_{p_i})^2\right)\right|_{[0,1) \times \pa\omega_{p_i}}\tto [0,1) \times \pa\omega_{p_i}.
 \end{equation*}
 We now want to glue the pair groupoid $\Omega_0\times\Omega_0 = \Omega_0^2$ and the family $(\maJ_i)_{i=1,2,\ldots, l}$ in a suitable way. First, let $V_i \subset \mathbb{R}^n$ be a neighborhood of $p_i$ such that there is a diffeomorphism $\varphi_i:  (0,1)\times \pa\omega_{i} \cong V_{i}\cap\Omega_{0}$. Let $\varphi=(\varphi_{i})_{p_i\in \Omega^{(0)}}$ on the disjoint union $\bigsqcup_{i=1}^l V_i$, and set
 \begin{equation*}
   M = \left(\bigsqcup\limits_{p_i \in
    \Omega^{(0)}}[0,1)\times \partial\omega_{p_i} \right)
    \bigcup\limits_{\varphi} \Omega_0 = \d'\Sigma(\Omega),
 \end{equation*}
 as above.

 Note that $\maJ_i|_{(0,1)\times\pa\omega_{p_i}}$ is the pair groupoid $((0,1)\times\pa\omega_{p_i})^2$, so that
 \begin{equation*}
   \maJ_i|_{(0,1)\times\pa\omega_{p_i}} \cong \Omega_0^2|_{V_i}.
 \end{equation*}
 Moreover, it is easy to see that the family $(\maJ_i)_{i = 1}^l \cup \{\Omega_0^2\}$ satisfies the strong gluing condition of Subsection \ref{sub:gluing_groupoids} (the orbit of any point in $M$ is either $\Omega_0$ or one of the $\pa\omega_{p_i}$, for $i = 1,\ldots,l$). Therefore the gluing in the following definition is a well defined Hausdorff Lie groupoid.

\begin{definition}\label{gpd1}
Let $\Omega$ be a conical domain without cracks. The \emph{layer potentials groupoid} associated to $\Omega$ is the Lie groupoid $\maG \tto M:=\pa'\Sigma(\Omega)$ defined
by
\begin{equation}\label{grpd.nocrack}
  \cG:=\left(\bigsqcup\limits_{p_i \in \Omega^{(0)}}\maJ_{p_i} \right)\quad
  \bigcup\limits_{\varphi} \quad
 \Omega_0^2 \quad  \tto \quad M
\end{equation}
where $\varphi=(\varphi_{p_i})_{p_i\in \Omega^{(0)}}$, with space of units
\begin{eqnarray}\label{units.nocrack}
  M &=& \left(\bigsqcup\limits_{p_i \in
    \Omega^{(0)}}[0,1)\times \partial\omega_{p_i} \right)\quad
    \bigcup\limits_{\varphi} \quad \Omega_0 \quad  \cong \quad\partial' \Sigma (\Omega),
\end{eqnarray}
where $\partial' \Sigma (\Omega)$ (defined in Equation (\ref{bdry_notinfty})) denotes the union of hyperfaces
which are not at infinity of a desingularization.
\end{definition}

Clearly, the space $M$ of units is
compact. We have that
$\Omega_0$ coincides with  the interior of $M$, so $\Omega_0$ is an open dense subset of $M$.
The following proposition summarizes the properties of the layer potentials groupoid and its groupoid $C^{*}$-algebra. Note that  $C^*(\cH)=\maC_0([0,\infty))\rtimes \RR^+$, where $\RR^+=(0, \infty)$ is the multiplication group,
by \cite{MRen}.

\begin{proposition}\label{Groupoid}
Let $\cG$ be the layer potentials groupoid \eqref{grpd.nocrack} associated to a domain
with conical points $\Omega\subset \RR^n$. Let
$\Omega^{(0)}=\{p_1,p_2,\cdots, p_l\}$ be the set of conical points
and $\Omega_0=\partial \Omega \backslash \Omega^{(0)}$ be the smooth
part of $\partial\Omega$. Then, $\cG$ is a Lie groupoid with units
$M= \partial' \Sigma (\Omega)$ (defined in Equation (\ref{bdry_notinfty})) such that
\begin{enumerate}

\item $ \Omega_0$ is an open, dense invariant subset with
$\cG_{\Omega_0} \cong \Omega_0 \times \Omega_0$ and $\Psi^m(\cG_{\Omega_0})\cong \Psi^m(\Omega_0)$.

\item For each conical point $p \in \Omega^{(0)}$, the subset $\{ p \} \times \pa
\omega_p$ is $\G$-invariant and
\begin{equation*}
\cG_{\pa M}= \bigsqcup\limits_{i=1}^{l} (\pa\omega_i \times \pa\omega_i) \times \RR^+ \times \{p_i\}
\end{equation*}
  \item If $P\in \Psi^m(\cG_{\pa M})$ then for each $p_i\in \Omega^{(0)}$, $P$ defines a Mellin convolution operator on $\RR^+\times \pa \omega_i$.
\smallskip

\item $\cG$ is (metrically) amenable, i.e.\ $C^*(\cG)\cong C^*_r(\cG)$.

\item If $n\geq 3$, $C^*(\cG_{\pa M})\cong  \bigoplus\limits_{i=1}^{l} \maC_0(\RR^+) \otimes \maK$. If $n=2$, $C^*(\cG_{\pa M})\cong \bigoplus\limits_{i=1}^{l} M_{k_i}(\maC_0(\mathbb{R}^+)) $,
where $k_i$ is the number of elements of $\pa \omega_i$,
  the integer $l$ is the number of conical points, and $\maK$ is the algebra of compact operators on an infinite dimensional separable Hilbert space.
\end{enumerate}

\end{proposition}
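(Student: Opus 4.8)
The plan is to verify each of the five items by passing to the explicit local models of the gluing in Definition~\ref{gpd1} and computing the relevant reductions and groupoid $C^*$-algebras block by block. Throughout set $U := \Omega_0$ and $F := \pa M$, so that $M = U \sqcup F$ with $U$ open and $F$ closed and $\cG$-invariant. For item (1), $\Omega_0$ is by construction the interior of the compact manifold with boundary $M = \pa'\Sigma(\Omega)$, hence open and dense; since the family $(\maJ_{p_i})_i \cup \{\Omega_0^2\}$ satisfies the strong gluing condition, the $\cG$-orbit of any interior point is exactly $\Omega_0$, which gives both invariance and the identification $\cG_{\Omega_0} \cong \Omega_0 \times \Omega_0$, and then $\Psi^m(\cG_{\Omega_0}) \cong \Psi^m(\Omega_0)$ is Example~\ref{ex:pair_groupoid}. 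For item (2), the point $0$ is fixed by the dilation action, so no arrow of $\maJ_{p_i}$ moves the fiber over $\{0\}$ outside itself, whence each $\{p_i\} \times \pa\omega_{p_i}$ is invariant; to compute $\cG_{\pa M}$ I would reduce each local model to $\{0\} \times \pa\omega_{p_i}$. The reduction $\maH|_{\{0\}}$ is precisely the isotropy group of the dilation action at $0$, namely $\RR^+ = (0,\infty)$ (see Example~\ref{ex:action_groupoid}), so $\maJ_{p_i}|_{\{0\}\times\pa\omega_{p_i}} \cong (\pa\omega_{p_i})^2 \times \RR^+$, which yields the stated disjoint union.

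For item (3), over each conical point $p_i$ the boundary groupoid is the product of the pair groupoid $(\pa\omega_i)^2$ with the group $\RR^+$, a transitive groupoid over $\pa\omega_i$ with isotropy $\RR^+$. A uniformly supported, right-invariant family $P = (P_x)$ on such a groupoid is, by right-invariance under the $\RR^+$-isotropy, a convolution operator in the $\RR^+$-variable; writing $\RR^+$ multiplicatively, convolution on $\RR^+$ is by definition Mellin convolution, so $P$ determines a Mellin convolution operator on $\RR^+ \times \pa\omega_i$, exactly as exploited in \cite{CQ13, MRen}.

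Items (4) and (5) rest on the exact sequence \eqref{renault.exact_item1} associated with the decomposition $M = U \sqcup F$. The ideal $C^*(\cG_U) = C^*(\Omega_0^2) \cong \maK$ is metrically amenable by Example~\ref{ex:pair_groupoid}, while each block $(\pa\omega_i)^2 \times \RR^+$ of $\cG_F$ is Morita equivalent to the abelian, hence amenable, group $\RR^+$, so $\cG_F$ is metrically amenable as well. The Five-Lemma argument recalled after \eqref{renault.exact_item3} then yields metric amenability of $\cG$, which is item (4). For item (5) I would use $C^*(A \times B) \cong C^*(A) \otimes C^*(B)$ on each block: the Mellin transform identifies $C^*(\RR^+)$ with $\maC_0(\RR^+)$, so it remains only to identify $C^*((\pa\omega_i)^2)$. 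When $n \ge 3$, $\pa\omega_i$ is a compact manifold of positive dimension, so $L^2(\pa\omega_i)$ is infinite-dimensional and $C^*((\pa\omega_i)^2) \cong \maK$, giving the first formula; when $n = 2$, $\pa\omega_i$ is a set of $k_i$ points and $C^*((\pa\omega_i)^2) \cong M_{k_i}(\CC)$, giving the second.

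The main obstacle is not a single hard estimate but the careful bookkeeping of the reductions, together with the correct normalization of the Mellin transform identifying $C^*(\RR^+)$ with $\maC_0(\RR^+)$ and the dichotomy between $n \ge 3$ (where $\pa\omega_i$ is positive-dimensional, so $C^*((\pa\omega_i)^2) \cong \maK$) and $n = 2$ (where $\pa\omega_i$ is finite, so $C^*((\pa\omega_i)^2) \cong M_{k_i}(\CC)$). Once these points are settled, the remaining verifications are routine and may largely be imported from \cite{CQ13}.
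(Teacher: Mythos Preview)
Your proposal is correct, and in fact goes further than the paper does: the paper does not prove Proposition~\ref{Groupoid} at all. It is stated as a summary of properties already established in \cite{CQ13}, with only a brief gloss after the statement explaining why the boundary restriction $P_i$ has a distribution kernel of the form $\kappa_i(r/s,x',y')$ and is therefore a Mellin convolution operator (this is essentially your argument for item~(3)). The remaining items are simply quoted.

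Your verification of each item is along exactly the lines one would expect and matches the structure of the construction in the paper: item~(1) via the pair-groupoid identification on the interior (Example~\ref{ex:pair_groupoid}), item~(2) via the isotropy of the dilation action at $0$, item~(4) via the exact sequences \eqref{renault.exact_item1}--\eqref{renault.exact_item3} and amenability of $\RR^+$ and of the pair groupoid, and item~(5) via the tensor decomposition $C^*((\pa\omega_i)^2 \times \RR^+) \cong C^*((\pa\omega_i)^2) \otimes C^*(\RR^+)$ together with the Mellin/Fourier identification $C^*(\RR^+) \cong \maC_0(\RR^+)$ and the dimension dichotomy for $\pa\omega_i$. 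There is nothing to correct.
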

Note that if $P\in \Psi^{m}(\maG)$ then, at the boundary, the regular representation yields an operator
$$P_{i}:= \pi_{p_i}(P) \in \Psi^m(\RR^+\times (\pa \omega_{i})^2),$$
where $\RR^+\times (\pa \omega_{i})^2$ is regarded as a groupoid (see Item (2) as above), which is defined by a distribution kernel $\kappa_i$ in $\RR^+\times (\pa
\omega_{i})^2$, hence a Mellin
convolution operator on $\RR^+\times \pa \omega_i$ with kernel
$\tilde{\kappa_{i}}(r,s, x',y'):=\kappa_i(r/s, x', y')$.
 If  $P\in \Psi^{-\infty}(\maG)$, that is, if $\kappa_i$ is smooth, then it defines a smoothing
{Mellin convolution operator} on $\RR^+\times \pa \omega_{i}$ (see
\cite{LP,QN12}). This is  one of the motivations in our
definition of $\cG$.

\begin{remark}\label{rmk.bgrpd}
Recall the definition of $b$-groupoid in Example~\ref{bgrpd}, which,
in the case of $M=\bigsqcup_{i} [0,1) \times \pa \omega_{i}$ comes down to
$$
  {^b\cG}=  \bigsqcup_{i,j} \;\RR^+\times (\pa_j \omega_{i})^2 \;\; \bigcup \;\; \Omega_0^2 \;
$$
where $\pa_j \omega_{i}$ denote the connected components of $\pa \omega_{i}$.
If $\pa \omega_{i}$ is connected,  for all $i=1,...,l$, then $\cG={^b\cG}$. In many cases of interest, $\pa \omega$ is not
connected, for instance, if $n=2$, that is,
if we have a polygonal domain, then $\pa \omega$ is  {always}
disconnected.
 In general, the groupoid
$\cG$ is larger and not $d$-connected, and ${^b\cG}$ is an open, wide
subgroupoid of $\cG$. (The main difference is that here we allow the different
connected components of the boundary, corresponding to the same conical point, to interact, in that there are arrows between
them.) The Lie algebroids of these two groupoids coincide, as $A(\cG)\cong {^bTM}$, the $b$-tangent bundle of $M$.
 Moreover,  $\Psi(\cG)\supset \Psi({^b\cG})$, and the later is the (compactly supported) $b$-pseudodifferential operators on $M$.
\end{remark}

\begin{remark}
  The construction of the layer potential groupoid can be extended to polygonal domains with cracks, when $n=2$, that is domains $\Omega \subset \R^{2}$ such that $\pa \Omega \neq \pa \overline{\Omega}$. This construction was done in \cite{CQ13}.

  The point is that in two dimensions, the actual cracks, given by $\pa \Omega \setminus \pa \overline{\Omega}$, are a collection of smooth crack lines and boundary points that behave like conical singularities (in higher dimensions we get \enquote{edges}).
  To each polygonal domain with cracks we can associate a generalized conical domain with \emph{no cracks}, the so-called unfolded domain,
  $$\Omega^u=\Omega \cup \pa^u\Omega,$$
   where $\pa^u \Omega$ is
the set of inward pointing unit normal vectors to  the smooth part of $\pa\Omega$.
The main idea is that a smooth crack point  should be covered by two points,
which correspond to the two sides of the crack. At the boundary we get a  double cover of each smooth crack line, and a $k$-cover of each singular crack point, $k$ being the ramification number (see \cite{CQ13} for details). The vertices of $\Omega$ are still vertices of the generalized domain, but now each singular crack point yields $k$ new vertices.
  The groupoid construction defined above still applies to this case, as all the results in this subsection and the next.
 
  \end{remark}

\subsection{Desingularization and weighted Sobolev spaces for conical domains}

An important class of function spaces on singular manifolds are weighted Sobolev spaces.
Let $\Omega$ be a conical domain, and $r_\Omega$ be the smoothened distant function to the set of conical points  $\Omega^{(0)}$
as in \cite{BMNZ, CQ13}.
The space $L^2(\Sigma(\Omega))$ is defined using the volume element of a compatible metric
on $\Sigma(\Omega)$. A natural choice of compatible metrics is $g= r^{-2}_\Omega \, g_e$, where
$g_e$ is the Euclidean metric. Then the Sobolev spaces $H^m(\Sigma(\Omega))$ are defined  in the usual way.
These Sobolev spaces can be identified with weighted Sobolev spaces.

\smallskip
Let $m\in \ZZ_{\geqslant 0}$ and $a\in\R$. The $m$-th
Sobolev space on $\Omega$ with weight $r_{\Omega}$ and index $a$ is
defined by
\begin{equation}\label{def.weighted}
  \maK_{a}^m(\Omega)=\{u\in L^2_{\text{loc}}(\Omega) \, | \,\,
  r_{\Omega}^{|\alpha|-a}\partial^\alpha u\in L^2(\Omega), \,\,\,\text{for
    all}\,\,\, |\alpha|\leq m\}.
\end{equation}
We defined similarly the spaces $ \maK_{a}^m(\pa\Omega)$. Note that in this case, as $\pa\Omega$ has no boundary, these spaces are defined for any $m\in \mathbb{R}$ {by complex interpolation \cite{BMNZ}}.

The following result is taken from \cite[Proposition~5.7 and
Definition 5.8]{BMNZ}.

\begin{proposition} \label{Identification}
Let $\Omega \subset \RR^n$ be a domain with conical points,
$\Sigma(\Omega)$ be its desingularization, and $\partial'\Sigma(\Omega)$
be the union of the hyperfaces that are not at infinity.
We have
\begin{enumerate}
\item[(a)] $ \maK^{m}_{\frac{n}{2}}(\Omega)\simeq H^{m}(\Sigma(\Omega),
  g),$ for all $m\in \mathbb{Z}$;
\item[(b)] $ \maK^{m}_{\frac{n-1}{2}}(\partial\Omega)\simeq
  H^{m}(\partial'\Sigma(\Omega),g)$, for all $m\in \mathbb{R}$.
\end{enumerate}
where the metric $g= r^{-2}_\Omega \, g_e$ with $g_e$ the Euclidean metric.
\end{proposition}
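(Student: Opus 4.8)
The plan is to prove both identifications by localizing near the conical points, where the weighted structure of $\maK^m_a$ can be matched explicitly against the complete metric $g = r_\Omega^{-2}g_e$. Away from $\Omega^{(0)}$ the function $r_\Omega$ is bounded above and below by positive constants, so $g$ is uniformly equivalent to $g_e$ and both $\maK^m_a$ and $H^m(\cdot,g)$ reduce to the ordinary Sobolev space $H^m$. Choosing a partition of unity subordinate to the cover of $M$ by the cone neighborhoods $V_{p_i}$ and by $\Omega_0$ away from $\Omega^{(0)}$, it therefore suffices to establish each isomorphism on a single model cone $\mathcal{C} = \{rx' : 0<r<1,\ x'\in\omega_{p_i}\}$, where we may take $r_\Omega = r = |x|$.

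For the model computation I would pass to polar coordinates $x = rx'$, with $r\in(0,1)$ and $x'\in\omega_{p_i}\subset S^{n-1}$. Then $g_e = dr^2 + r^2 g_{S^{n-1}}$, so $g = r^{-2}g_e = (dr/r)^2 + g_{S^{n-1}}$ is the product (cylindrical) metric in the variable $t = -\log r$; the face $\partial''\Sigma$ at $r=0$ is pushed to $t=+\infty$, making $(\mathcal{C},g)$ complete towards the conical point. Two facts then drive the result. First, the conformal rescaling gives $dV_g = r^{-n}\,dV_{g_e}$ on the $n$-dimensional cone (and $r^{-(n-1)}\,dV_{g_e}$ on the $(n-1)$-dimensional boundary cone), which is exactly why the weight indices $\tfrac{n}{2}$ and $\tfrac{n-1}{2}$ occur: $\int |u|^2\,dV_g = \int |r^{-n/2}u|^2\,dV_{g_e}$, yielding the $m=0$ identifications. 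Second, the $g$-bounded vector fields near the cone point are precisely those spanned over $C^\infty$ by $r\partial_{x_1},\dots,r\partial_{x_n}$ (equivalently by $r\partial_r = \sum_j x_j\partial_{x_j}$ together with the spherical vector fields), since $|r\partial_{x_j}|_g = 1$ while $|\partial_{x_j}|_g = r^{-1}$.

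The heart of the argument is matching the higher-order norms. I would show that the $C^\infty(\Sigma)$-module of differential operators of order $\le m$ generated by the $g$-bounded fields $\{r\partial_{x_j}\}$ coincides with the module generated by $\{r^{|\alpha|}\partial^\alpha : |\alpha|\le m\}$: expanding iterated products $r\partial_{x_{i_1}}\cdots r\partial_{x_{i_k}}$ and commuting the factors $r$ past the derivatives produces only terms $r^{|\beta|}\partial^\beta$ with $|\beta|\le k$ and smooth, bounded coefficients, and conversely each $r^{|\alpha|}\partial^\alpha$ lies in this module. Hence
$$
\|u\|^2_{H^m(\mathcal{C},g)} \;\sim\; \sum_{|\alpha|\le m} \|r^{|\alpha|}\partial^\alpha u\|^2_{L^2(g)} \;=\; \sum_{|\alpha|\le m} \|r^{|\alpha|-n/2}\partial^\alpha u\|^2_{L^2(g_e)} \;=\; \|u\|^2_{\maK^m_{n/2}(\mathcal{C})},
$$
which gives part (a) for $m\in\ZZ_{\ge 0}$; negative integer orders follow from the $L^2(g)$-duality $\maK^{-m}_{n/2} \cong (\maK^m_{n/2})'$ (note $L^2(g) = L^2(r^{-n}dx) = \maK^0_{n/2}$ is self-dual), matching $H^{-m}(\Sigma,g) \cong (H^m(\Sigma,g))'$. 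Part (b) is the same computation carried out on the $(n-1)$-dimensional cone over $\partial\omega_{p_i}$, whose desingularization is the cylinder $[0,1)\times\partial\omega_{p_i}$; since $\partial\Omega$ has no boundary of its own, $(\Omega_0,g)$ is a complete manifold with cylindrical ends, so $H^m(\partial'\Sigma,g)$ is defined for all real $m$ through the functional calculus of the $g$-Laplacian, and the isomorphism extends from integers to all $m\in\RR$ by complex interpolation.

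The main obstacle I anticipate is the module comparison together with the control of its remainder terms. One must check not merely that the top-order symbols of the two families agree, but that the families generate the same $C^\infty$-module with coefficients that are smooth and bounded together with all their derivatives \emph{up to the face at infinity} on the compactified desingularization $\Sigma$ (equivalently, that $r_\Omega$ is comparable to a genuine defining function for $\partial''\Sigma$ and that every commutator and lower-order term stays inside the weighted scale). Establishing this uniformity near $r=0$, rather than merely pointwise in the interior, is what makes the norm equivalence rigorous; the passage to non-integer and negative orders in part (b) then relies on both families forming compatible interpolation and duality scales.
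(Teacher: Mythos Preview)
The paper does not prove this proposition: it is quoted verbatim from \cite[Proposition~5.7 and Definition~5.8]{BMNZ}, with no argument given. So there is no proof in the paper to compare yours against.

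That said, your outline is the standard route to such identifications and is essentially correct. The key steps --- localizing to a model cone via a partition of unity, passing to polar coordinates so that $g = r^{-2}g_e$ becomes the product cylindrical metric $(dr/r)^2 + g_{S^{n-1}}$, reading off the volume weight $r^{-n}$ (respectively $r^{-(n-1)}$) to pin down the indices $n/2$ and $(n-1)/2$, and then matching the $C^\infty$-module generated by the $g$-unit vector fields $r\partial_{x_j}$ with the one generated by $\{r^{|\alpha|}\partial^\alpha\}$ --- are exactly the ingredients in the cited reference. Your identification of the main technical point (uniformity of the commutator remainders up to the face at infinity, i.e.\ that the coefficients lie in $C^\infty(\Sigma(\Omega))$ and not merely $C^\infty(\Omega)$) is also accurate; this is handled in \cite{BMNZ} by working directly with the $b$-vector fields $\mathcal V_b$ on $\Sigma(\Omega)$, for which $r\partial_r$ and the angular fields form a local frame smooth across $r=0$. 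The extension to negative integers by duality and to real $m$ on the boundary by interpolation is likewise what is done there.
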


\subsection{Fredholm Conditions for Layer Potentials }\label{s.FredCond}

In this section, we relate the  layer potential groupoids for conical domains
constructed in Section \ref{s.LP_groupoids} with  boundary action groupoids. Moreover, we also show that they fit in the framework of Fredholm groupoids so we can apply the Fredholm criteria obtained in the previous sections operators on layer potential groupoids.

Recall the definition of boundary action groupoids from Subsection \ref{sub:boundary_action_groupoids}.
\begin{theorem}
The layer potentials groupoid defined in Definition \ref{gpd1} is a boundary action groupoid.
\end{theorem}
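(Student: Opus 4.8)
The plan is to verify the three conditions of Definition \ref{defn:boundary_action_groupoids} directly for $\cG$, building on the fact---established before Definition \ref{gpd1}---that $\cG$ is a Hausdorff Lie groupoid assembled by gluing the pieces $\maJ_{p_i}$ and $\Omega_0^2$. Condition \eqref{it:defn_bag_1} I would dispatch at once using Proposition \ref{Groupoid}(1): the smooth part $\Omega_0$ is an open dense $\cG$-invariant subset with $\cG_{\Omega_0}\simeq\Omega_0\times\Omega_0$, so $U=\Omega_0$ is the required dense orbit.

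The core step is to recognize each building block as a boundary action groupoid while recording a compatible local cover. Since $\Omega_0$ is the smooth part of $\pa\Omega$, hence a smooth manifold without corners, Example \ref{ex:bag_pair_groupoid} shows that the pair groupoid $\Omega_0^2$ is a boundary action groupoid, and I would fix the resulting cover $\{W_\alpha\}$ of $\Omega_0$ with isomorphisms $\Omega_0^2|_{W_\alpha}\simeq(\R^{n-1}\rtimes\R^{n-1})|_{W'_\alpha}$. For each conical point $p_i$, the transformation groupoid $\maH=[0,\infty)\rtimes(0,\infty)$ is an action groupoid (Example \ref{ex:action_groupoid}) which is readily seen to be a boundary action groupoid (its dense orbit $(0,\infty)$ carries the pair groupoid, and the one-element cover satisfies the gluing condition), while $(\pa\omega_{p_i})^2$ is a boundary action groupoid by Example \ref{ex:bag_pair_groupoid} because $\pa\omega_{p_i}$ is a smooth closed manifold. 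Lemma \ref{lm:bag_product} then makes the product $\maH\times(\pa\omega_{p_i})^2$ a boundary action groupoid, and Lemma \ref{lm:bag_reduction} makes its reduction
\begin{equation*}
  \maJ_{p_i}=\bigl(\maH\times(\pa\omega_{p_i})^2\bigr)\big|_{[0,1)\times\pa\omega_{p_i}}
\end{equation*}
a boundary action groupoid with unique dense orbit $(0,1)\times\pa\omega_{p_i}$. I would fix the associated cover $\{Z_{i,\beta}\}$, identifying each $\maJ_{p_i}|_{Z_{i,\beta}}$ with a reduction of an action groupoid.

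Next I would glue the covers. The family $\{W_\alpha\}_\alpha\cup\{Z_{i,\beta}\}_{i,\beta}$ is an open cover of $M=\Omega_0\cup\bigsqcup_i[0,1)\times\pa\omega_{p_i}$, and since $W_\alpha\subset\Omega_0$ and $Z_{i,\beta}\subset[0,1)\times\pa\omega_{p_i}$, the construction of $\cG$ gives $\cG|_{W_\alpha}=\Omega_0^2|_{W_\alpha}$ and $\cG|_{Z_{i,\beta}}=\maJ_{p_i}|_{Z_{i,\beta}}$; each is a reduction of an action groupoid, which is condition \eqref{it:defn_bag_2}. For condition \eqref{it:defn_bag_3}, I would use that the coarse family $\{\maJ_{p_i}\}_i\cup\{\Omega_0^2\}$ satisfies the strong (hence weak) gluing condition recorded before Definition \ref{gpd1}, so that every composable pair of $\cG$ is represented inside a single $\maJ_{p_i}$ or inside $\Omega_0^2$; the weak gluing condition of the corresponding block-cover ($\{Z_{i,\beta}\}$ or $\{W_\alpha\}$) then confines both arrows to a common refined chart, giving the weak gluing condition for $\{W_\alpha\}\cup\{Z_{i,\beta}\}$.

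The hard part, I expect, is precisely this last bookkeeping: one cannot apply Lemma \ref{lm:bag_attaching_ends} to the whole collar in a single stroke, because the \emph{uniqueness} of the dense orbit in Definition \ref{defn:boundary_action_groupoids}\eqref{it:defn_bag_1} fails for the disjoint collar groupoid $\bigsqcup_i\maJ_{p_i}$---it carries the $l$ distinct dense orbits $(0,1)\times\pa\omega_{p_i}$, which merge into the single orbit $\Omega_0$ only after the gluing with $\Omega_0^2$. This is what forces the passage to the refined cover and the explicit verification of the weak gluing condition above. As an alternative I would consider attaching the conical points one at a time by a mild generalization of Lemma \ref{lm:bag_attaching_ends}, each step gluing a single $\maJ_{p_i}$ whose unique dense orbit matches the relevant trace of $\Omega_0$.
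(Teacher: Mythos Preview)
Your proposal is correct and follows essentially the same route as the paper: build up each $\maJ_{p_i}$ from $\maH$ and the pair groupoid via Lemma~\ref{lm:bag_product} and Lemma~\ref{lm:bag_reduction}, then glue with $\Omega_0^2$. The paper simply invokes Lemma~\ref{lm:bag_attaching_ends} for the final step, whereas you correctly observe that this lemma does not literally apply to the full collar $\bigsqcup_i\maJ_{p_i}$ (its restriction to the interior is a disjoint union of pair groupoids, not the pair groupoid of $U\cap V$) and work around this by merging the local covers and checking the weak gluing condition directly; this extra care is warranted but does not change the overall strategy.
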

\begin{proof}
  The layer potential groupoids is build in several steps. First, the groupoid $\maH = [0,\infty) \rtimes (0,\infty)$ is obviously a boundary action groupoid. If $p_i$ is a conical point of $\Omega$, then $\d\omega_{p_i} \times\d\omega_{p_i}$ is also a boundary action groupoid (see Example \ref{ex:bag_pair_groupoid}). Hence $\maH \times (\d \omega_{p_i})^2$ is a boundary action groupoid by Theorem \ref{lm:bag_product}, and so is its reduction $\maJ_i = \left(\maH \times (\d \omega_{p_i})^2\right)|_{[0,1)\times\d \omega_{p_i}}$, according to Lemma \ref{lm:bag_reduction}. The layer potential groupoids is then obtained by gluing boundary action groupoids $\maJ_i$, for $i = 1,\ldots,l$, with the pair groupoid $\Omega_0 \times\Omega_0$, therefore it is again a boundary action groupoid by Lemma \ref{lm:bag_attaching_ends}.
\end{proof}

Let us now show that the layer potentials groupoid is Fredholm. The results of Subsection \ref{sub:fredholm_groupoids} do not apply here, so we use a more direct method.
Let us first see the case of straight cones. Let $\omega \subset S^{n-1}$ be an open subset with smooth
boundary (note that we allow $\omega$ to be \emph{disconnected}) and
$$\Omega := \{t y',\ y' \in \omega,\ t \in (0, \infty)\} =  \mathbb{R}^{+} \,\omega$$
be the (open, unbounded) cone with base $\omega$. The desingularization becomes  in this case  an
half-infinite solid cylinder
\begin{equation*}
\Sigma(\Omega)=  [0,\infty) \times \overline{\omega}
\end{equation*}
with boundary $\pa \Sigma(\Omega) = [0,\infty) \times \partial \omega
\cup \{0\}\times \omega$, so that  $M= \partial'
\Sigma(\Omega)=[0,\infty)\times \partial \omega$ the union of the
hyperfaces not at infinity. Taking the one-point compactification $[0,\infty]$ of $[0,\infty)$, we can consider the groupoid $\overline{\maH}$ as in Example \ref{expl.transfgroupoidFredholm}. Then the {layer potentials groupoid associated to a straight cone
  $\Omega\cong \mathbb{R}^+\omega$} is the product Lie groupoid with units
$M=[0,\infty] \times \pa \omega$, corresponding to a desingularization
  of $\pa \Omega$, defined as
\begin{equation*}\label{cJ}
\cJ := \overline{\maH} \times (\partial\omega)^2.
\end{equation*}
Now, we have seen in Example \ref{expl.transfgroupoidFredholm} that $\overline{\maH}$ is a Fredholm groupoid, hence $\maJ$ is also a Fredholm groupoid.

In the general case, we can proceed in several ways: we can use the same argument as in the straight cone case (that is, as in Example \ref{expl.transfgroupoidFredholm} ),  or we can use the fact that the gluing (along the interior) of Fredholm groupoids is also a Fredholm groupoid. By analogy with the classes of Fredholm groupoids studied in \cite{CNQ17}, we chose to check that $\maG$ is actually given by a fibered pair groupoid over the boundary.

\begin{theorem}\label{thm.LPFred}
The layer potentials groupoid defined in Definition \ref{gpd1} is a Fredholm groupoid.
\end{theorem}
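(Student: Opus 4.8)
The plan is to verify the hypotheses of Corollary~\ref{prop.fred} directly, exhibiting $\maG$ as the disjoint union of a pair groupoid over its open dense orbit and a fibered pull-back of a bundle of amenable groups over the boundary; all the structural data needed are already recorded in Proposition~\ref{Groupoid}. First I would set $U := \Omega_0$, the smooth part of $\pa\Omega$, which by Proposition~\ref{Groupoid}(1) is an open, dense, $\maG$-invariant subset of $M$ with $\maG_U \simeq U \times U$, and put $F := M \setminus U = \pa M = \bigsqcup_{i=1}^l \pa\omega_{p_i}$. This furnishes the first half of the decomposition required by Corollary~\ref{prop.fred}.

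The core of the argument is to recognize $\maG_F$ as a fibered pull-back. By Proposition~\ref{Groupoid}(2),
\begin{equation*}
  \maG_F = \bigsqcup_{i=1}^l (\pa\omega_i \times \pa\omega_i) \times \RR^+,
\end{equation*}
where $\RR^+ = (0,\infty)$ is the multiplicative group, arising here as the isotropy group of $\maH$ at $0$ (recall Example~\ref{expl.transfgroupoidFredholm}); being abelian, it is amenable. I would then take $B := \{p_1,\ldots,p_l\}$ as a discrete $0$-dimensional manifold, let $\maH' := B \times \RR^+$ be the trivial bundle of groups over $B$, and let $f : F \to B$ be the locally constant map sending $\pa\omega_{p_i}$ to $p_i$. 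Since $B$ is discrete (and both $F$ and $B$ are boundaryless), $f$ is automatically a tame submersion, and the computation carried out in Example~\ref{ex.help-for-lp} yields
\begin{equation*}
  f\pullback(\maH') = \bigsqcup_{i=1}^l (\pa\omega_i \times \pa\omega_i) \times \RR^+ = \maG_F.
\end{equation*}
The one point deserving care, and the essentially only obstacle, is to match the groupoid structure maps under this identification: on each component the factor $(\pa\omega_i)^2$ must carry the pair-groupoid structure and the factor $\RR^+$ the group structure. This is precisely what $\maG_F$ inherits from the reductions $\maJ_{p_i}$ from which $\maG$ is built, so the verification is routine once the isotropy at each conical point has been identified with $\RR^+$.

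With both identifications in place, we have $\maG_U \simeq U \times U$ and $\maG_F \simeq f\pullback(\maH')$, with $\maH'$ a bundle of amenable Lie groups and $f$ a tame submersion. Corollary~\ref{prop.fred} then applies and yields that $\maG$ is Fredholm.
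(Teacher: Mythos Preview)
Your proposal is correct and follows essentially the same approach as the paper: you set $U=\Omega_0$, identify $\maG_F$ with a fibered pull-back $f\pullback(\maH')$ of a trivial bundle of copies of the amenable group $\RR^+$ over a finite discrete base (exactly as in Example~\ref{ex.help-for-lp}), and then invoke Corollary~\ref{prop.fred}. The only difference is cosmetic---the paper indexes the discrete base by the partition $\{\pa\omega_i\}$ rather than the set of conical points $\{p_i\}$---so the arguments are effectively identical.
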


\begin{proof}
Let us deal with the case of conical domains without cracks. The other case is similar, taking the unfolded boundary.

It is clear that $\Omega_0$ is an open, dense, $\maG$-invariant subset of $M= \pa'\Sigma(\Omega)$, with $\maG_{\Omega_{0}}$ is the pair groupoid. Let
$$F:= M\backslash \Omega_0 =\pa M = \bigcup\limits_{p \in \Omega^{(0)}} \{ p \} \times \pa \omega_{p}\cong  \bigsqcup\limits_{i=1}^{l} \pa \omega_{p_{i}}.$$
We have
\begin{equation*}
\cG_{F}= \bigsqcup\limits_{i=1}^{l} (\pa\omega_i \times \pa\omega_i) \times (\RR^+ )
\end{equation*}
For any $x\in F$, we have $(\maG_F)_x^x =\maG_x^x \simeq \{ x\} \times \mathbb{R}^+ \simeq \mathbb{R}^+ $.
Since the group $\mathbb{R}^+ $ is commutative, it is amenable.
We claim that $\maR(\maG_{F})=\{\pi_x,\, x \in F\}$ is a strictly spectral / exhaustive set of
  representations of $C\sp{\ast}(\maG_F)$.
This can be proved directly, using the description in (4) of Proposition \ref{Groupoid}.

We show alternatively that $\maG_{F}$ can be given as a fibered pair groupoid, along the lines of Example \ref{ex.help-for-lp}.
Let $\maP:= \left\{  \pa\omega_i  \right\}_{i=1,..., l}$ be a finite partition of the smooth manifold $F$ and let $f: F\to \maP$, $x\in  \pa\omega_i  \mapsto  \pa\omega_i  $.
Then each $\pa\omega_i$ is a closed submanifold of $F$ and $\maP$ is a smooth discrete manifold, with $f$ is a locally constant smooth fibration.

Let $\maH:= \maP\times \RR^{+}$, as a product of a manifold and a Lie group. Then, by Example \ref{ex.help-for-lp},
$$f\pullback(\maH) = \bigsqcup\limits_{i=1}^{l} (\pa\omega_i \times \pa\omega_i) \times \RR^+  =  \maG_{F}.$$
Hence, by Corollary \ref{prop.fred}, the result is proved.

\end{proof}

If we apply Theorem \ref{thm.nonclassical2} \cite[Theorem~4.17]{CNQ} to our case, we obtain the main theorems as follows. Recall that the regular representations $\pi_{x}$ and $\pi_{y}$ are unitarily equivalent for $x,y$ in the same orbit of $\maG_{F}$, so that for $P\in \Psi^{m}(\maG)$  we obtain a family of Mellin convolution operators
$P_{i}:=\pi_{x}(P)$ on $\RR^+\times \pa\omega_i$, $i=1,...,p$, with
$x=(p_{i},x')   \in \pa M$, $x'\in \pa\omega_{p_{i}}$.

Recall that the space $L^{m}_{s}(\maG)$
is the {\em norm closure} of $\Psi^m(\maG)$ in the topology of
continuous operators $H^s(M )\to H^{s-m}(M)$. By the results in \cite{QL18, QN12}, if $P\in  L^{m}_{s}(\maG)$, then $\pi_{p_{i}}(P)$ is also a Mellin convolution operator.

\begin{theorem}\label{thm.fredholm}
Suppose that $\Omega \subset \mathbb{R}^n$ is a conical domain without cracks and
$\Omega^{(0)}=\{p_1,p_2,\cdots, p_l\}$ is the set of conical points. Let $\maG\tto M=\pa'\Sigma(\Omega)$
be the layer potentials groupoid as in Definition \ref{gpd1}.
 Let $P\in L^{m}_{s}(\maG)\supset \Psi^m(\maG)$ and $s \in \RR $.
Then
\begin{equation*}
  P : \maK_{\frac{n-1}{2}}^s(\pa\Omega) \to \maK_{\frac{n-1}{2}}^{s-m}(\pa\Omega)
\end{equation*}
is Fredholm if, and only if,
\begin{enumerate}
  \item $P$ is elliptic and
  \item the Mellin convolution operators
    \begin{equation*}
      P_{i} : H^s(\RR^+\times \pa\omega_i;g) \to H^{s-m}(\RR^+\times \pa\omega_i;g)
    \end{equation*}
    are invertible, for $i=1,\ldots,p$, where the metric $g= r^{-2}_\Omega \, g_e$ with $g_e$ the Euclidean metric.
\end{enumerate}
\end{theorem}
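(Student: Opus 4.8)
The plan is to deduce this statement from the abstract Fredholm criterion of Theorem \ref{thm.nonclassical2}, once its hypotheses have been verified and its conclusion has been translated into the concrete language of the layer potentials groupoid. Theorem \ref{thm.LPFred} has just established that $\maG$ is a Fredholm groupoid, and by Proposition \ref{Groupoid}(1) the smooth part $U := \Omega_0$ is an open, dense, $\maG$-invariant subset with $\maG_U \simeq \Omega_0 \times \Omega_0$ the pair groupoid. Thus the hypotheses of Theorem \ref{thm.nonclassical2} are met for the dense orbit $U = \Omega_0$ and $F := M \setminus U = \pa M$.

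First I would identify the function spaces. The groupoid-theoretic Sobolev space $H^s(U)$ appearing in Theorem \ref{thm.nonclassical2} is built from the right-invariant metric on the fibers $(\maG_x)_{x \in U}$, which are all diffeomorphic to $\Omega_0$ and carry the compatible metric $g = r_\Omega^{-2} g_e$. Since $U = \Omega_0$ is precisely the interior of $M = \pa'\Sigma(\Omega)$, Proposition \ref{Identification}(b) yields the identification
\[
  \maK^s_{\frac{n-1}{2}}(\pa\Omega) \simeq H^s(\pa'\Sigma(\Omega),g) = H^s(U),
\]
and likewise for index $s-m$. Under this identification, Fredholmness of $P$ on $\maK^s_{\frac{n-1}{2}}(\pa\Omega)$ is equivalent to Fredholmness of $P : H^s(U) \to H^{s-m}(U)$, to which Theorem \ref{thm.nonclassical2} applies: $P$ is Fredholm if, and only if, $P$ is elliptic and each limit operator $P_x : H^s(\maG_x) \to H^{s-m}(\maG_x)$ is invertible for all $x \in F = \pa M$.

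It then remains to identify these limit operators with the Mellin convolution operators in the statement. By Proposition \ref{Groupoid}(2), the restriction $\maG_{\pa M}$ decomposes as $\bigsqcup_{i=1}^l (\pa\omega_i \times \pa\omega_i) \times \RR^+$, so for $x = (p_i, x')$ with $x' \in \pa\omega_{p_i}$ the $d$-fiber is $\maG_x \simeq \RR^+ \times \pa\omega_i$ and the isotropy group is $\maG_x^x \simeq \RR^+$, which is abelian and hence amenable. By Proposition \ref{Groupoid}(3) --- extended to all of $L^m_s(\maG)$ via the results of \cite{QL18, QN12} --- the operator $P_x = \pi_x(P)$ is exactly the Mellin convolution operator $P_i$ on $\RR^+ \times \pa\omega_i$ with the metric $g$. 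Finally, since $\pi_x$ and $\pi_y$ are unitarily equivalent whenever $x,y$ lie in the same orbit of $\maG_F$, and the orbit of any $x \in \pa M$ is the whole component $\pa\omega_{p_i}$ containing it, invertibility of $P_x$ for all $x \in F$ reduces to invertibility of the single operator $P_i = \pi_{p_i}(P)$, one per conical point $i = 1, \ldots, l$. This gives precisely conditions (1) and (2).

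The main obstacle, and the only step requiring genuine care rather than bookkeeping, is the identification in the second paragraph of the abstract groupoid Sobolev space $H^s(U)$ with the weighted space $\maK^s_{\frac{n-1}{2}}(\pa\Omega)$: one must check that the right-invariant fiber metric underlying $H^s(\maG_x)$ coincides with the compatible metric $g = r_\Omega^{-2} g_e$ used in Proposition \ref{Identification}, and that the passage to the Mellin (logarithmic) coordinate, which turns the dilation action into a translation, matches the cylindrical Sobolev norm on $\RR^+ \times \pa\omega_i$. Everything else follows by feeding the structural results of Proposition \ref{Groupoid} directly into Theorem \ref{thm.nonclassical2}.
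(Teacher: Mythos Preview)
Your proposal is correct and follows essentially the same approach as the paper: the paper states the theorem as a direct application of Theorem \ref{thm.nonclassical2} once Theorem \ref{thm.LPFred} establishes that $\maG$ is Fredholm, uses unitary equivalence of regular representations within orbits to reduce to one limit operator $P_i$ per conical point, and invokes \cite{QL18, QN12} to identify these as Mellin convolution operators for $P \in L^m_s(\maG)$. If anything, you are more explicit than the paper in spelling out the identification $\maK^s_{\frac{n-1}{2}}(\pa\Omega) \simeq H^s(U)$ via Proposition \ref{Identification}(b), which the paper leaves implicit.
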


\begin{rem}
  Fredholm conditions similar to those of Theorem \ref{thm.fredholm} also hold for polygonal domains with cracks. In that case, some extra limit operators arise from the fact that the boundary $\d \Omega$ should be desingularized near the crack points.
\end{rem}

We expect that these results have applications to layer potentials.

\vspace{0.3cm}

\printbibliography

\end{document}